\UseRawInputEncoding
\documentclass[11pt, reqno, oneside]{amsart}
\usepackage[T1]{fontenc}
\usepackage[utf8]{inputenc}
\usepackage[british]{babel}
\usepackage{amsmath,amssymb,amsthm,mathtools}
\usepackage{stmaryrd}
\usepackage{geometry}
\usepackage{xcolor}
\usepackage{graphicx}
\usepackage{hyperref}
\usepackage{enumitem}
\hypersetup{colorlinks=true,linkcolor=blue,citecolor=blue,urlcolor=blue}
\usepackage{caption, subcaption}
\usepackage{comment}
\usepackage{cleveref}
\usepackage{tikz-cd}
\usepackage{adjustbox}
\usepackage{thmtools}

\tikzset{
curarrow/.style={
rounded corners=10pt,
execute at begin to={every node/.style={fill=red}},
to path={-- ([xshift=50pt]\tikztostart.center)
  |- (#1) node[fill=white] {$\scriptstyle \delta$}
  -| ([xshift=-40pt]\tikztotarget.center)
  -- (\tikztotarget)}
  }
}

\tikzset{
curvararrow/.style={
rounded corners=10pt,
execute at begin to={every node/.style={fill=red}},
to path={-- ([xshift=70pt]\tikztostart.center)
  |- (#1) node[fill=white] {$\scriptstyle \delta$}
  -| ([xshift=-40pt]\tikztotarget.center)
  -- (\tikztotarget)}
  }
}

\DeclarePairedDelimiterX\Set[1]\{\}{%

#1
}

\title{Hamiltonian Group Actions in Cosymplectic Geometry}
\author{Eva Miranda}
\address{Eva Miranda,
Laboratory of Geometry and Dynamical Systems \& SYMCREA, Department of Mathematics, EPSEB, Universitat Polit\`{e}cnica de Catalunya-IMTech
in Barcelona and
\\ CRM Centre de Recerca Matem\`{a}tica, Campus de Bellaterra
Edifici C, 08193 Bellaterra, Barcelona.}
\thanks{Corresponding author: Eva Miranda. Email: \texttt{eva.miranda@upc.edu}}
\thanks{Eva Miranda is funded by the Catalan Institution for Research and Advanced Studies via an ICREA Academia Prize 2021 and by the Alexander von Humboldt Foundation via a Friedrich Wilhelm Bessel Research Award.}
\thanks{Both authors are supported by the Spanish State
Research Agency, through the Severo Ochoa and Mar\'{\i}a de Maeztu Program for Centers and Units
of Excellence in R\&D (project CEX2020-001084-M), and under grant reference PID2023-146936NB-I00 funded by MICIU/AEI/10.13039/501100011033 and by ERDF/EU}
\author{Pablo Nicolás}
\address{Pablo Nicolás,
Centre de Recerca Matemàtica, CRM \& Laboratory of Geometry and Dynamical Systems and SYMCREA research unit, Department of Mathematics, Universitat Polit\`{e}cnica de Catalunya, Barcelona}
\thanks{Pablo Nicolás is supported under an MDM-FPI contract with reference code PRE2022-102974.}
\date{\today}

\declaretheoremstyle[
  spaceabove=1em, spacebelow=1em,
  headfont=\bfseries,
  notefont=\bfseries, notebraces={(}{)},
  bodyfont=\itshape,
  postheadspace=0.5em
]{thmlike}

\declaretheoremstyle[
  spaceabove=1em, spacebelow=1em,
  headfont=\normalfont,
  notefont=\normalfont, notebraces={(}{)},
  bodyfont=\normalfont,
  postheadspace=1em,
  qed=$\blacksquare$
]{prooflike}

\declaretheoremstyle[
  spaceabove=1em, spacebelow=1em,
  headfont=\bfseries,
  notefont=\bfseries, notebraces={(}{)},
  bodyfont=\normalfont,
  postheadspace=0.5em
]{deflike}

\declaretheorem[style=thmlike, numberwithin=section]{theorem}
\declaretheorem[style=thmlike, sibling=theorem]{corollary}
\declaretheorem[style=deflike, sibling=theorem]{definition}
\declaretheorem[style=thmlike, sibling=theorem]{lemma}
\declaretheorem[style=thmlike, sibling=theorem]{proposition}
\declaretheorem[style=deflike, sibling=theorem]{example}

\declaretheorem[style=deflike, sibling=theorem]{remark}

\declaretheorem[style=deflike, sibling=theorem, numbered=no, name=Definition]{definition*}
\declaretheorem[style=deflike, sibling=theorem, numbered=no, name=Example]{example*}

\newcommand{\g}{\mathfrak g}

\newcommand*\diff{\mathop{}\!\mathrm{d}}

\makeatletter

\newcommand{\c@otimes}[2]{%
  \vbox{
    \ialign{##\cr
      \hidewidth$\m@th#1{}_\frown$\kern-\scriptspace\hidewidth\cr
      \noalign{\nointerlineskip\kern-1pt}
      $\m@th#1\otimes$\cr
    }%
  }%
}
\makeatother

\begin{document}

\begin{abstract}
    We develop a theory of Hamiltonian group actions on cosymplectic manifolds. These odd-dimensional manifolds combine a codimension-one symplectic foliation with a distinguished Reeb direction, and arise naturally both in stable Hamiltonian geometry and as critical hypersurfaces of \(b\)-symplectic manifolds.
    
    Our approach is based on a compact symplectic thickening process: every cosymplectic manifold \((M,\alpha,\beta)\) gives rise to a symplectic manifold
    \[
    \bigl(M\times \mathbb{S}^1,\ \beta + \diff\theta\wedge\alpha\bigr).
    \]
    We prove that Hamiltonian cosymplectic actions lift canonically to Hamiltonian symplectic actions on this symplectic manifold. This provides a systematic bridge between equivariant symplectic geometry and the cosymplectic setting.
    
    Using this bridge, we establish cosymplectic analogues of convexity theorems for torus actions, Delzant theorem for toric actions, ABBV localization, Duistermaat--Heckman formulas, and Kirwan surjectivity. The resulting formulas are not merely formal pullbacks from the symplectic case: the Reeb direction appears explicitly through the factor \(\alpha\), and, in the mapping-torus case, the localization and volume formulas are governed by the modular period together with the equivariant geometry of the symplectic fiber. We also explain how these results apply to Hamiltonian geometry on the critical hypersurfaces of \(b\)-symplectic manifolds.
\end{abstract}

\dedicatory{To the memory of Marisa Fernández\textsuperscript{1} with admiration.}

\maketitle

\footnotetext[1]{From the first author: Marisa Fernández passed away last November.  Marisa has been a guiding light throughout my career and a dear friend, always generous with her wisdom and advice. I have learned immensely from her and from her experience. This article on cosymplectic geometry is written in honour of her persistence, her vision, and her remarkable work. }

\refstepcounter{footnote}

\section{Introduction}
Cosymplectic geometry is the odd-dimensional counterpart of symplectic geometry, in which a manifold carries both a codimension-one symplectic foliation and a distinguished transverse direction. More precisely, a cosymplectic manifold is a $(2n+1)$-dimensional manifold endowed with a pair of closed forms $(\alpha,\beta)$ such that
\(
    \alpha\wedge\beta^n\neq 0.
\)
The $1$-form $\alpha$ defines a codimension-one foliation, and the restriction of $\beta$ to its leaves is symplectic. Thus, cosymplectic manifolds lie naturally at the intersection of symplectic geometry, foliation theory, Poisson geometry, and Hamiltonian dynamics.

A basic feature of a cosymplectic structure is the existence of a canonical Reeb vector field $R$, uniquely determined by
\[
\iota_R\alpha=1,\qquad \iota_R\beta=0.
\]
The Reeb direction is transverse to the characteristic symplectic foliation and plays a structural role throughout the theory. In the compact case, under mild hypotheses, the geometry is governed by a mapping-torus description: compact cosymplectic manifolds with a compact leaf fibre over $\mathbb{S}^1$, with symplectic fibre and symplectic monodromy. This makes cosymplectic manifolds particularly amenable to both symplectic and dynamical methods.

Cosymplectic manifolds also arise naturally in singular symplectic geometry. If $(X^{2n+2},Z,\omega_b)$ is a $b$-symplectic manifold (cf. \cite{guillemin_symplectic_2014}), then near a connected component $Z$ of the critical hypersurface one has the normal form
\[
\omega_b = \frac{\diff r}{r}\wedge\alpha+\beta,
\]
where $(\alpha,\beta)$ is a cosymplectic structure on $Z$. Thus, the critical hypersurface of a $b$-symplectic manifold is canonically cosymplectic. Hamiltonian questions in $b$-symplectic geometry therefore induce natural Hamiltonian questions on cosymplectic manifolds.

The purpose of this paper is to develop a systematic Hamiltonian theory for group actions on cosymplectic manifolds. In symplectic geometry, Hamiltonian actions are governed by moment maps, equivariant cohomology, convexity theorems, localization formulae, and reduction. We show that these tools admit natural cosymplectic analogues. The central mechanism is a compact symplectic thickening: given a cosymplectic manifold $(M,\alpha,\beta)$, the product
\[
\widehat M=M\times \mathbb{S}^1
\]
carries the symplectic form
\[
\widehat\omega = \beta + \diff \theta\wedge\alpha.
\]
Moreover, Hamiltonian cosymplectic actions lift canonically to Hamiltonian symplectic actions on $(\widehat{M},\widehat{\omega})$, with the same moment map pulled back from $M$.

This construction provides a bridge between equivariant symplectic geometry and the cosymplectic setting. It allows us to transfer classical results such as convexity, ABBV localization, and Kirwan surjectivity. At the same time, the resulting formulae retain genuinely cosymplectic features: the Reeb direction appears through the factor $\alpha$, and, in the mapping-torus case, the formulas reduce to symplectic expressions on the fibre multiplied by the modular period.

We begin by formulating Hamiltonian cosymplectic actions in the Cartan model. With the convention
\[
    \iota_{X_H}\omega = - \diff H,
\]
a Hamiltonian $G$-action with moment map $\mu\colon M\to\mathfrak{g}^*$ satisfies
\[
    \iota_{X^\#}\beta = - \diff \mu^X,
    \qquad
    \mu^X=\langle\mu,X\rangle.
\]
We prove that this condition is equivalent to the existence of an equivariantly closed cosymplectic extension
\[
    \underline{\beta}=\beta-\mu.
\]
More generally, we characterize equivariant closedness of leafwise symplectic forms in terms of the existence of leafwise moment maps. This provides the cohomological framework for cosymplectic Hamiltonian actions and cosymplectic reduction.

Our first application is a convexity theorem. If a compact torus acts in a Hamiltonian fashion on a compact cosymplectic manifold, then the image of the moment map is a convex polytope. This follows by passing to the compact symplectic thickening and applying the Atiyah--Guillemin--Sternberg convexity theorem.

We then establish localization formulae for Hamiltonian group actions in the cosymplectic setting. For a compact cosymplectic manifold $(M,\alpha,\beta)$ with a Hamiltonian action of a compact connected Lie group $G$, we obtain
\[
\int_M \alpha\wedge \underline{\tau}\, \mathrm{e}^{\beta-\mu}
=
\sum_{C\in\pi_0(M^G)}
\int_C
\alpha\wedge
\frac{i_C^*\bigl(\underline{\tau}\, \mathrm{e}^{\beta-\mu}\bigr)}{e_G(\nu_C)}.
\]
This is the cosymplectic analogue of the Atiyah--Bott--Berline--Vergne localization formula. The appearance of the factor $\alpha$ reflects the transverse Reeb direction and is the main structural difference from the symplectic formula.

In the mapping-torus case, the localization formula becomes especially explicit. If $M$ fibres over $\mathbb{S}^1$ with compact symplectic fibre $L$ and modular period $\operatorname{mp}(\alpha,\beta)$, then the relevant integrals over $M$ reduce to integrals over $L$ multiplied by $\operatorname{mp}(\alpha,\beta)$. This yields cosymplectic analogues of the Duistermaat--Heckman formula and stationary phase formulas, expressed in terms of the equivariant geometry of the symplectic fibre and the monodromy data.

Finally, we prove a Kirwan-type surjectivity theorem for Hamiltonian cosymplectic reduction. Under the usual regularity and smoothness assumptions on the reduced space, the Kirwan map
\[
\mathrm{H}_G^\bullet(M)\longrightarrow \mathrm{H}^\bullet(M\sslash G)
\]
is surjective. The proof again proceeds through the compact symplectic thickening, but the resulting statement applies intrinsically to the cosymplectic reduced space.

We also study the variation of the cosymplectic cohomology classes under Hamiltonian reduction. In the symplectic case, the Duistermaat--Heckman theorem describes the linear variation of the cohomology class of the reduced symplectic form as the value of the moment map varies through regular values. In the cosymplectic setting, there are two cohomology classes to track, namely the Reeb class $[\alpha]$ and the leafwise symplectic class $[\beta]$. Using the compact symplectic thickening, we prove that the reduced Reeb class remains constant, while the reduced leafwise symplectic class varies linearly with the moment value, with slope given by the Chern class of the corresponding principal torus bundle. Thus the Duistermaat--Heckman variation formula admits a natural cosymplectic counterpart:\[[\alpha_\xi]=[\alpha_{\xi_0}],\qquad
[\beta_\xi]=[\beta_{\xi_0}]+\langle \xi-\xi_0,\Omega\rangle .
\]
This shows, in particular, that the symplectic geometry of the leaves changes according to the usual Duistermaat--Heckman affine law.

These results show that cosymplectic manifolds support a Hamiltonian theory closely parallel to the symplectic one, while retaining distinctive odd-dimensional features. The compact symplectic thickening supplies the bridge to classical equivariant symplectic geometry; the Reeb direction, characteristic foliation, modular period, and mapping-torus structure provide the genuinely cosymplectic content of the theory.

\section{Preliminaries} \label{sec:preliminaries}

\subsection{Cosymplectic geometry} \label{ssec:cosymplectic-geometry}

A cosymplectic manifold is a triple $(M^{2n + 1}, \alpha, \beta)$ with $\alpha \in \Omega^1(M)$ and $\beta \in \Omega^2(M)$ closed forms satisfying the condition
\begin{equation} \label{eq:cosymplectic-volume}
    \alpha \wedge \beta^n \neq 0
\end{equation}
As a consequence, cosymplectic manifolds are orientable and are equipped with a canonical choice of volume. We may refer as well to the pair $(\alpha, \beta)$ and call it a cosymplectic structure on $M$.

From the fact that $\alpha$ is closed and nowhere vanishing, the distribution $\ker \alpha \subset \mathrm{T}M$ has constant rank and is integrable. Hence, we obtain a foliation $\mathcal{F}$ called the \emph{characteristic foliation} of $M$. Furthermore, this foliation turns out to be symplectic in the following sense. From the natural inclusion $i \colon \mathrm{T} \mathcal{F} \to \mathrm{T}M$, the dual map induces a surjection $i^* \colon \mathrm{T}^* M \to \mathrm{T}^* \mathcal{F}$ which can be extended to a map $j \colon \Omega^\bullet(M) \to \Omega^\bullet(\mathcal{F})$. Similarly to the smooth setting, $\Omega^k(\mathcal{F})$ denotes the space of sections of $\bigwedge^k \mathrm{T}^* \mathcal{F}$. The map sits inside the short exact sequence
\begin{equation} \label{eq:gmp-short-sequence}
    \begin{tikzcd}[column sep=small]
    	0 & {\alpha \wedge \Omega^{\bullet - 1}(\mathcal{F})} & {\Omega^{\bullet}(M)} & {\Omega^{\bullet}(\mathcal{F})} & 0
    	\arrow[from=1-1, to=1-2]
    	\arrow[from=1-2, to=1-3]
    	\arrow["j", from=1-3, to=1-4]
    	\arrow[from=1-4, to=1-5]
    \end{tikzcd}
\end{equation}
and, since $\alpha$ is closed, the operator $\diff \colon \Omega^\bullet(M) \to \Omega^{\bullet + 1}(M)$ descends to a well-defined operator $\diff_{\mathcal{F}} \colon \Omega^\bullet(\mathcal{F}) \to \Omega^{\bullet + 1}(\mathcal{F})$ called the \emph{foliated differential}. In this setting, the form $j \beta \in \Omega^2(\mathcal{F})$ is symplectic, that is, $\diff_{\mathcal{F}}$-closed and non-degenerate.

Cosymplectic manifolds are inherently dynamical objects: there exists a unique vector field $R \in \mathfrak{X}(M)$, called the \emph{Reeb vector field}, satisfying the set of equations
\begin{equation}
    \iota_R \alpha = 1, \quad \iota_R \beta = 0.
\end{equation}
The Reeb vector field is transverse to the symplectic foliation $\mathcal{F}$ and maps leaves to leaves. These properties were used by Guillemin, Miranda, and Pires, building on previous characterizations of Tischler~\cite[Thm.\,1]{tischler_fibering_1970}, to prove a structure theorem for compact cosymplectic manifolds under mild topological assumptions (see Li~\cite[Thm.\,1]{li_topology_2008} for a similar result for co-Kähler manifolds).

\begin{theorem}[{Guillemin--Miranda--Pires \cite[Thm.\,13]{guillemin_codimension_2011}}] \label{thm:mapping-torus}
    Let $(M,\eta,\omega)$ be a compact cosymplectic manifold with a compact leaf $L \subset M$. Then, all leaves are compact and $M$ is diffeomorphic to the \emph{mapping torus} of a symplectomorphism
    \begin{equation*}
        M \cong \frac{L\times[0,c]}{(x,0)\sim(\varphi(x),c)} \eqqcolon M_\varphi,
    \end{equation*}
    where $\varphi \coloneqq \Phi_R^c \colon (L,\omega_L) \to (L,\omega_L)$ is a symplectomorphism. In particular, $M$ fibres over $\mathbb{S}^1$ with fibre $L$ and monodromy $\varphi$.
\end{theorem}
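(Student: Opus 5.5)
The plan is to use the Reeb flow to turn a compact leaf into a global fibration. Write $(M,\eta,\omega)$ for the cosymplectic structure and let $R$ be its Reeb vector field. Since $M$ is compact, $R$ is complete with flow $\Phi_R^t$. Cartan's formula together with $\diff\eta=0$, $\diff\omega=0$, $\iota_R\eta=1$ and $\iota_R\omega=0$ gives
\[
\mathcal{L}_R\eta=\diff(\iota_R\eta)=0,\qquad \mathcal{L}_R\omega=0 .
\]
Hence $\Phi_R^t$ preserves $\eta$ and $\omega$. It therefore preserves $\ker\eta$, maps leaves of $\mathcal{F}$ to leaves, and restricts to symplectomorphisms between leaves for the induced forms $j\omega$.

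First I would show that all leaves are compact and that the Reeb flow returns $L$ to itself. Consider the map
\[
F\colon L\times\mathbb{R}\to M,\qquad (x,t)\mapsto\Phi_R^t(x).
\]
Its differential is an isomorphism at every point, because $\mathrm{T}L=\ker\eta$ and $R$ is transverse to it. So $F$ is a local diffeomorphism, and each $\Phi_R^t(L)$ is a compact leaf, being the diffeomorphic image of $L$. The image $U=F(L\times\mathbb{R})$ is open. It is also closed: if $p_k=\Phi_R^{t_k}(x_k)\to p$, then the $\Phi_R^{t_k}(L)$ are compact leaves, and by Reeb stability a compact leaf with finite holonomy has a saturated neighbourhood foliated by compact leaves that are graphs over it. Here the holonomy is in fact trivial, since the Reeb flow provides a transverse flow preserving the foliation, so a saturated flow-box $L\times(-\epsilon,\epsilon)$ around $L$ exists. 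Alternatively, closedness can be seen directly by flowing $p$ backwards. By connectedness of $M$ (which I would assume, or else argue componentwise) we get $U=M$, so every leaf is of the form $\Phi_R^t(L)$ and is compact.

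Next I would identify the period. Set
\[
c=\inf\{t>0:\Phi_R^t(L)=L\}.
\]
This set is nonempty: the open cover of $M$ by the images $F(L\times(a,a+1))$ has a finite subcover, so the leaves through finitely many flow times exhaust $M$ and some later time must hit $L$ again. The flow-box shows $c>0$. One then checks that $F$ descends to a diffeomorphism
\[
L\times\mathbb{R}/\bigl((x,t)\sim(\Phi_R^{-c}x,\,t+c)\bigr)\;\cong\; M .
\]
This is exactly the mapping torus $M_\varphi$ with $\varphi=\Phi_R^c|_L$, after matching the gluing convention $(x,0)\sim(\varphi(x),c)$. Injectivity uses that $F(x,t)=F(y,s)$ forces $\Phi_R^{t-s}(L)=L$, hence $t-s\in c\mathbb{Z}$ by minimality. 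The map $\varphi$ is a symplectomorphism of $(L,\omega_L)$ because the flow preserves $\omega$. The projection $(x,t)\mapsto t \bmod c$ gives the fibration over $\mathbb{S}^1$.

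A cleaner alternative for the first step is to use $\eta$ directly. Pick a base point and define $f(p)=\int_{\gamma}\eta \bmod P$, where $P$ is the period group of $[\eta]$ and $\gamma$ is a path from the base point to $p$. Compactness of $L$ should force $P$ to be cyclic, generated by $c$: a loop transverse to $L$ has period equal to its first return time. Then $f\colon M\to\mathbb{R}/c\mathbb{Z}$ is a submersion, since $\diff f=\eta$ is nowhere zero, and it is proper. By Ehresmann's theorem it is a fibration whose fibres are the leaves, and the normalised lift $R$ of $\partial_t$ yields the monodromy.

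The main obstacle is the step showing that the period group is discrete, equivalently that the leaf $L$ is closed in $M$ as a level set rather than dense. I would handle it through the flow-box argument around the compact leaf $L$: the saturation of a small transversal is an open set on which the leaves are the level sets of $t$. Density of the period group would force leaves to accumulate on $L$ inside the flow-box, contradicting that product structure.
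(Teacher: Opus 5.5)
The paper does not prove this statement. It quotes it from Guillemin--Miranda--Pires and indicates only that their argument uses two properties of the Reeb flow: it is transverse to $\mathcal{F}$, and it maps leaves to leaves. Your proof through $F(x,t)=\Phi_R^t(x)$ rests on exactly these two facts and is correct in substance. $F$ is a local diffeomorphism, the return times of $L$ form $c\mathbb{Z}$ with $c>0$, and $F$ descends to a diffeomorphism from $L\times\mathbb{R}/\bigl((x,t)\sim(\Phi_R^{-c}x,t+c)\bigr)$ onto $M$. That quotient is the mapping torus of $\varphi^{-1}$ in the paper's convention $(x,0)\sim(\varphi(x),c)$. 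This is harmless, since $t\mapsto c-t$ identifies it with $M_\varphi$.

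\textbf{Closedness of the image.} The step you should rewrite is the closedness of $U=F(L\times\mathbb{R})$. Reeb stability around the leaves $\Phi_R^{t_k}(L)$ gives neighbourhoods of those leaves with no uniformity in $k$. It also says nothing about the leaf $L_p$ through the limit point $p$, which is not yet known to be compact. Your remark about flowing $p$ backwards is the right idea only if you flow by a small time near $p$, not by the possibly unbounded $t_k$. The argument has to be local at $p$:
\begin{enumerate}[label=(\arabic*)]
\item Take a small plaque $P\subset L_p$ through $p$. By the same transversality computation, $\{\Phi_R^s(q) : q\in P,\ |s|<\delta\}$ contains a neighbourhood of $p$.
\item Since $p\in\overline{U}$, this set meets $U$ at some point $\Phi_R^s(q)$.
\item $U$ is flow-invariant and a union of leaves, so $q\in U$, and hence $L_p\subset U$.
\end{enumerate}

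\textbf{Smaller points.}
\begin{enumerate}[label=(\arabic*)]
\item Connectedness of $M$ must be assumed; it cannot be handled componentwise. Adjoin a torus component with a linear cosymplectic structure whose $\eta$ has rationally independent periods. All its leaves are dense, and the conclusion fails.
\item The closing alternative via the period group is unnecessary. As written, it does not actually show that the period group is cyclic. That fact follows from your first route: since $F^*\eta=\diff t$, the $\eta$-period of any loop is $c$ times the degree of its projection to $\mathbb{R}/c\mathbb{Z}$.
\item You can streamline the construction of $c$. The set $\{t:\Phi_R^t(L)=L\}$ is a closed subgroup of $\mathbb{R}$. It is proper, by transversality of $R$ to $L$. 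It is nontrivial, because otherwise $F$ would be a diffeomorphism $L\times\mathbb{R}\to M$, contradicting compactness. Hence it equals $c\mathbb{Z}$ with $c>0$.
\end{enumerate}
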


Under these assumptions, the minimum positive number $c \in \mathbb{R}^+$ such that $\Phi_R^c \colon M \to M$ maps all leaves to themselves is an invariant of the cosymplectic structure $(\alpha, \beta)$, called the \emph{modular period}. We will denote it by $\operatorname{mp}(\alpha, \beta)$.

\subsection{Equivariant cohomology}
\label{ssec:equiv-cohomology}

Equivariant cohomology, introduced by Borel \cite{borel_1960_seminar}, provides a robust way to study the topology of quotient spaces $M/G$, even when the group action is not free. The key idea is to replace $M$ by its so-called \emph{homotopy quotient}
\[
M_G \coloneqq (M \times \mathrm{E}G)/G,
\]
where $\mathrm{E}G$ is a weakly contractible space with a free $G$-action (whose existence is due to Milnor~\cite[Thm.\,3.1]{milnor_construction_1956}). The space $M_G$ comes equipped with a canonical map $M_G \to M/G$ and retains information about stabilisers that is lost in the naive quotient. The \emph{equivariant cohomology} of $M$ is then defined as
\begin{equation}\label{eq:def-equiv-cohomology-homotopy}
    \mathrm{H}_G^\bullet(M) \coloneqq \mathrm{H}^\bullet(M_G).
\end{equation}

For compact, connected Lie groups, equivariant cohomology admits a differential model, known as the \emph{Cartan model}, which computes its torsion-free part (cf.~\cite[Thm.\,A.1]{tu_introductory_2020}). The Cartan complex is
\begin{equation}\label{eq:cartan-complex-def}
    \Omega_G^\bullet(M) \coloneqq \big(\Omega^\bullet(M)\otimes \operatorname{Sym}^\bullet(\mathfrak{g}^*)\big)^G,
\end{equation}
graded by $\deg(\omega \otimes p)=i+2j$ for $\omega\in\Omega^i(M)$ and $p\in\operatorname{Sym}^j(\mathfrak{g}^*)$. The $G$-action is given by pullback on forms and the coadjoint action on $\operatorname{Sym}^\bullet(\mathfrak{g}^*)$.

Fixing a basis $X_1,\dots,X_m$ of $\mathfrak{g}$ with dual basis $\sigma^1,\dots,\sigma^m$, and writing $X^\#$ for the fundamental vector field, the equivariant differential is
\begin{equation}\label{eq:equiv-diff-tensor-def}
    \diff_G = \diff \otimes \operatorname{id} - \sum_{i=1}^m \iota_{X_i^\#}\otimes \sigma^i.
\end{equation}

More generally, following \cite[Sec.\,2.1]{ginzburg_equivariant_1999}, let $(A^\bullet,\diff)$ be a cochain complex of Fréchet $G$-modules. It is called a \emph{$G$-differential complex} if there exist operators $\iota_X$ satisfying
\begin{subequations}
\begin{align}
    \iota_X \iota_Y + \iota_Y \iota_X &= 0, \\
    g \iota_X g^{-1} &= \iota_{\operatorname{Ad}_g X}, \\
    \mathcal{L}_X &= \diff \iota_X + \iota_X \diff.
\end{align}
\end{subequations}
In this setting, one defines the Cartan complex and differential by analogy with \eqref{eq:cartan-complex-def}–\eqref{eq:equiv-diff-tensor-def}, and the resulting cohomology is denoted $\mathrm{H}_G^\bullet(A)$.

\begin{example}
If $G=\{e\}$ acts trivially, then $\mathfrak{g}=0$ and $\operatorname{Sym}^\bullet(\mathfrak{g}^*)=\mathbb{R}$. The Cartan complex reduces to $A^\bullet$ and $\diff_G=\diff$, so equivariant cohomology coincides with ordinary cohomology.
\end{example}

\begin{example}\label{ex:equiv-symp-form}
Let $M$ be a $G$-manifold. An \emph{equivariant symplectic form} is a non-degenerate, $\diff_G$-closed element $\underline{\omega}\in\Omega_G^2(M)$, which can be written as
\[
\underline{\omega} = \omega - \mu = \omega - \sum_{i=1}^m f_i \otimes \sigma^i.
\]
Non-degeneracy implies that $\omega$ is symplectic. A direct computation yields
\[
    \diff_G \underline{\omega}
    = \diff \omega - \sum_{i=1}^m (\iota_{X_i^\#}\omega + \diff f_i)\otimes \sigma^i,
\]
so $\diff_G\underline{\omega}=0$ is equivalent to $\diff\omega=0, \iota_{X_i^\#}\omega = -\diff f_i$. Thus, $\underline{\omega}$ encodes a symplectic form together with a weakly Hamiltonian $G$-action, with moment map $\mu^X=\sum_{i = 1}^m f_i\,\sigma^i(X)$.

Finally, $G$-invariance of $\underline{\omega}$ is equivalent to equivariance of the moment map:
\[
\rho_g^*\mu^X = \mu^{\operatorname{Ad}_g X},
\]
so the action is Hamiltonian. Thus, we observe that there exists a correspondence between equivariantly closed extensions $\underline{\omega}$ of a symplectic form $\omega$ and moment maps for the $G$-action, a result obtained by Atiyah and Bott~\cite[Prop.\,6.18]{atiyah-momentmap-1984}
\end{example}

\section{Equivariant closedness, moment maps, and cosymplectic reduction}

We have seen in \Cref{ex:equiv-symp-form} how the concept of equivariant symplectic forms corresponds to Hamiltonian $G$-spaces. Drawing inspiration from this relationship, we define the notion of equivariant cosymplectic structure.

\begin{definition} \label{def:equivariant-cosymplectic}
    Let $M$ be a smooth manifold with $\dim M = 2n + 1$ endowed with an action of a compact, connected Lie group $G$. An \emph{equivariant cosymplectic structure} is a pair $(\underline{\alpha}, \underline{\beta})$, with $\underline{\alpha} \coloneqq \alpha \in \Omega_G^1(M)$ and $\underline{\beta} \coloneqq \beta - \mu \in \Omega_G^2(M)$ equivariantly closed forms satisfying the condition $\alpha \wedge \beta^n \neq 0$.
\end{definition}

Equivariant cosymplectic structures have already been introduced in \cite[Def.\,4.1]{miranda_equivariant_2026}, where their basic properties are discussed. We recall that, if $(\underline{\alpha}, \underline{\beta})$ is an equivariant cosymplectic structure, then the underlying triple $(M, \alpha, \beta)$ is a cosymplectic manifold \cite[Prop.\,4.2]{miranda_equivariant_2026}, the $G$-action is tangent to the characteristic foliation $\mathcal{F}$ \cite[p.\,10]{miranda_equivariant_2026}, and the sequence \eqref{eq:gmp-short-sequence} admits an equivariant extension to the short exact sequence
\begin{equation} \label{eq:foliated-ses-cosymplectic}
    \begin{tikzcd}
    	0 & {\alpha \wedge \Omega_{G}^{\bullet - 1}(\mathcal{F})} & {\Omega_G^\bullet(M)} & {\Omega_G^\bullet(\mathcal{F})} & 0
    	\arrow[from=1-1, to=1-2]
    	\arrow[from=1-2, to=1-3]
    	\arrow[from=1-3, to=1-4, "{j \otimes \operatorname{id}_{\operatorname{Sym}^\bullet(\mathfrak{g}^*)}}" {yshift=5pt}]
    	\arrow[from=1-4, to=1-5]
    \end{tikzcd}
\end{equation}
of Cartan complexes \cite[Eq.\,26]{miranda_equivariant_2026}.

In the symplectic setting, the condition of being equivariantly closed is equivalent to the action being Hamiltonian (cf.~\Cref{ex:equiv-symp-form}). We now give the corresponding characterization in the foliated setting, which shows that equivariant leafwise closedness encodes the existence of a (leafwise) moment map.

\begin{theorem}\label{thm:leafwise-equiv-closed-characterisation}
    Let $\beta \in \Omega^2(M)$ be a $G$-invariant leafwise symplectic form, i.e.\ $j\beta\in\Omega^2(\mathcal F)$ is closed and non-degenerate, and fix $R$ with $\iota_R\alpha=1$ and $\iota_R\beta=0$. The following are equivalent:
    \begin{enumerate}[label=(\roman*)]
        \item\label{it:leafwise-equiv-closed-1} There exists $\underline{\beta}\in\Omega_G^2(M)$ representing $\beta$ such that $j(\diff_G\underline{\beta})=0$.
        \item\label{it:leafwise-equiv-closed-2} For every $X\in\g$ there exists $\mu^X \in \mathcal{C}^\infty(M)$ such that
        \[
        \iota_{X^\#}j\beta=-j\diff \mu^X.
        \]
        \item\label{it:leafwise-equiv-closed-3} There exists $\mu\colon M\to\g^*$ such that
        \[
        \iota_{X^\#}\beta=-\diff\langle\mu,X\rangle+\phi_X\alpha
        \]
        for some $\phi_X\in C^\infty(M)$.
        \item\label{it:leafwise-equiv-closed-4} There exists $\mu\in C^\infty(M)\otimes\g^*$ such that
        \begin{equation}\label{eq:cartan-leaf}
        \diff_G(\beta-\mu)=\alpha\wedge\underline{\gamma}
        \quad\text{for some }\underline{\gamma}\in\Omega_G^2(M).
        \end{equation}
    \end{enumerate}
\end{theorem}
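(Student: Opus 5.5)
We prove the cycle of implications \ref{it:leafwise-equiv-closed-1}$\Rightarrow$\ref{it:leafwise-equiv-closed-2}$\Rightarrow$\ref{it:leafwise-equiv-closed-3}$\Rightarrow$\ref{it:leafwise-equiv-closed-4}$\Rightarrow$\ref{it:leafwise-equiv-closed-1}, working throughout with the short exact sequence \eqref{eq:foliated-ses-cosymplectic}. Its kernel consists precisely of forms divisible by $\alpha$, and since $j$ intertwines $\diff$ with $\diff_{\mathcal F}$ and commutes with $\iota_{X^\#}$ (the action being tangent to $\mathcal F$), it intertwines $\diff_G$ with the foliated Cartan differential. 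We fix a basis $X_i$ with dual basis $\sigma^i$ as in \eqref{eq:equiv-diff-tensor-def}.

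For \ref{it:leafwise-equiv-closed-1}$\Rightarrow$\ref{it:leafwise-equiv-closed-2}, write the representative as $\underline\beta=\beta-\sum_i f_i\otimes\sigma^i$; by degree, this is the general form of an element of $\Omega^2_G(M)$ with form-part $\beta$. Exactly as in \Cref{ex:equiv-symp-form},
\[
\diff_G\underline\beta=\diff\beta-\sum_i(\iota_{X_i^\#}\beta+\diff f_i)\otimes\sigma^i .
\]
Applying $j$ and comparing coefficients of $\sigma^i$ gives $\iota_{X_i^\#}j\beta=-j\diff f_i$. Setting $\mu^X=\sum_i f_i\sigma^i(X)$ yields \ref{it:leafwise-equiv-closed-2} by linearity.

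For \ref{it:leafwise-equiv-closed-2}$\Rightarrow$\ref{it:leafwise-equiv-closed-3}, define the form $\eta_X=\iota_{X^\#}\beta+\diff\mu^X$. By hypothesis $j\eta_X=0$, so exactness of \eqref{eq:gmp-short-sequence} in degree one gives $\eta_X=\phi_X\alpha$. Evaluating on $R$ identifies $\phi_X=\iota_R\eta_X=\beta(X^\#,R)+R(\mu^X)=R(\mu^X)$, using $\iota_R\beta=0$. We take $\mu=\sum_i\mu^{X_i}\sigma^i$, so that $X\mapsto\mu^X$ is linear.

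The main obstacle is \ref{it:leafwise-equiv-closed-3}$\Rightarrow$\ref{it:leafwise-equiv-closed-4} together with $G$-invariance. The Cartan complex consists of invariant elements, so $\mu$ must be equivariant and $\underline\gamma$ invariant. Since $G$ is compact, we replace $\mu$ by its average $\int_G \operatorname{Ad}^*_{g}\,\rho_g^*\mu\,\diff g$. Because $\beta$ is invariant, and $\alpha$ is invariant (it is closed and the action is tangent to $\mathcal F$, so $\mathcal L_{X^\#}\alpha=\diff\iota_{X^\#}\alpha=0$ for connected $G$), the relation in \ref{it:leafwise-equiv-closed-3} is preserved under averaging, with an averaged $\phi$. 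Then
\[
\diff_G(\beta-\mu)=\diff\beta-\sum_i\phi_{X_i}\alpha\otimes\sigma^i .
\]
Next we handle $\diff\beta$. Since $j\diff\beta=\diff_{\mathcal F}j\beta=0$, exactness gives $\diff\beta=\alpha\wedge\iota_R\diff\beta$; this holds because $\iota_R$ applied to $\diff\beta=\alpha\wedge\kappa$ with $j\kappa$ determined recovers it. One can check this directly from the decomposition $\Omega(M)=\alpha\wedge\ker\iota_R\oplus\ker\iota_R$. We therefore set
\[
\underline\gamma=\iota_R\diff\beta-\sum_i\phi_{X_i}\otimes\sigma^i,
\]
which lies in $\Omega_G^2(M)$ as a sum of a $2$-form and a polynomial-degree-one term. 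The invariance of $\underline\gamma$ follows because $R$ is canonically determined by invariant data; if $R$ is merely fixed, we average it as well, keeping it normalized. This choice of $\underline\gamma$ gives \eqref{eq:cartan-leaf}.

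Finally, \ref{it:leafwise-equiv-closed-4}$\Rightarrow$\ref{it:leafwise-equiv-closed-1} is immediate: $\underline\beta=\beta-\mu$ represents $\beta$, and $j(\alpha\wedge\underline\gamma)=0$ by \eqref{eq:foliated-ses-cosymplectic}.
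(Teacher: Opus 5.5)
Your proof is correct and follows essentially the same route as the paper: both rest on the componentwise computation $\diff_G(\beta-\mu)=\diff\beta-\sum_i(\iota_{X_i^\#}\beta+\diff f_i)\otimes\sigma^i$ together with exactness of the foliated sequence, and you arrange the equivalences as a single cycle rather than pairwise. Your averaging of $\mu$ (using invariance of $\alpha$ and $\beta$) and your explicit choice $\underline\gamma=\iota_R\diff\beta-\sum_i\phi_{X_i}\otimes\sigma^i$ with $\phi_X=R(\mu^X)$ spell out two points the paper's one-line ``Cartan reformulation'' leaves implicit: the equivariance of $\mu$, and the fact that $\beta$ need not be closed.
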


\begin{proof}
    The equivalence between items \ref{it:leafwise-equiv-closed-1} and \ref{it:leafwise-equiv-closed-4} follows from the exactness of the sequence \eqref{eq:foliated-ses-cosymplectic}: the condition $j(\diff_G\underline{\beta})=0$ is equivalent to $\diff_G\underline{\beta}=\alpha\wedge\underline{\gamma}$.
    
    For the equivalence between \ref{it:leafwise-equiv-closed-1} and \ref{it:leafwise-equiv-closed-2}, write $\underline{\beta}=\beta-\mu$. Then $j(\diff_G\underline{\beta}) = -\iota_{X_i^\#}j\beta - \diff_{\mathcal F} f_i$,
    so its vanishing is equivalent to $\iota_{X_i^\#}j\beta = -\diff_{\mathcal F} f_i$. Linearity yields the condition for all $X\in\g$.
    
    For items \ref{it:leafwise-equiv-closed-2} and \ref{it:leafwise-equiv-closed-3}, the identity $\iota_{X^\#}j\beta=-\diff_{\mathcal F} \mu^X$ lifts to $\iota_{X^\#} \beta = -\diff \mu^X + \phi_X\alpha$ by exactness of the foliated sequence \eqref{eq:foliated-ses-cosymplectic}.
    
    Finally, the equivalence between items \ref{it:leafwise-equiv-closed-3} and \ref{it:leafwise-equiv-closed-4} is the Cartan reformulation of leafwise equivariant closedness.
\end{proof}

However, equivariant cosymplectic structures are not simply leafwise equivariantly closed, but also equivariantly closed. This condition will eventually (cf. \Cref{prop:equiv-cosymplectic-hamiltonian-correspondence}) imply the existence of a genuine moment map rather than a leafwise one. In any case these conditions naturally lead to reduction.

We briefly recall cosymplectic reduction following \cite{albert_reduction_1989}. Let $(M,\alpha,\beta)$ be a cosymplectic manifold with a $G$-action preserving $\alpha$ and $\beta$. A map $\mu\colon M\to\g^*$ is a \emph{moment map} if
\begin{subequations}\label{eq:def-hamiltonian-cosymplectic}
\begin{align}
    \iota_{X^\#}\alpha &= 0, \\
    \iota_{X^\#}\beta &= -\diff\mu^X, \\
    \mathcal L_R \mu^X &= 0, \label{eq:cos-mom-map-reeb-inv}
\end{align}
\end{subequations}
for all $X\in\g$.

\begin{theorem}[{\cite[p.\,639]{albert_reduction_1989}}] \label{thm:cos-reduction}
    Let $(M,\alpha,\beta)$ be a connected cosymplectic manifold with a Hamiltonian $G$-action and moment map $\mu$. Let $\xi\in\mathfrak{g}^*$ be a regular value, and $G_\xi$ the stabiliser of $\xi$. If the induced action of $G_\xi$ on $\mu^{-1}(\xi)$ is fibrating, then the reduced space
    \[
    M_\xi \coloneqq \mu^{-1}(\xi)/G_\xi
    \]
    inherits a cosymplectic structure $(\alpha_\xi,\beta_\xi)$.
\end{theorem}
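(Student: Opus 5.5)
Since $\xi$ is a regular value, $\mu^{-1}(\xi)$ is a submanifold of codimension $\dim\g$. The fibrating hypothesis makes the $G_\xi$-action free and proper, so the quotient map $\pi\colon\mu^{-1}(\xi)\to M_\xi$ is a smooth principal bundle. Write $i\colon\mu^{-1}(\xi)\hookrightarrow M$ for the inclusion. The plan follows Marsden--Weinstein: build $\alpha_\xi,\beta_\xi$ on $M_\xi$ by $\pi^*\alpha_\xi=i^*\alpha$ and $\pi^*\beta_\xi=i^*\beta$, then check closedness and the volume condition \eqref{eq:cosymplectic-volume}.

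\emph{Descent.} Take a $1$-form (resp.\ $2$-form) on $\mu^{-1}(\xi)$. It descends along $\pi$ if and only if it is $G_\xi$-invariant and basic, that is, annihilated by $\iota_{X^\#}$ for every $X\in\g_\xi$.
\begin{itemize}
\item Invariance holds because $G$ preserves $\alpha$ and $\beta$.
\item Horizontality of $\alpha$ is the condition $\iota_{X^\#}\alpha=0$.
\item For $\beta$ we have $\iota_{X^\#}\beta=-\diff\mu^X$, and $\diff\mu^X$ restricts to zero on $\mu^{-1}(\xi)$ because $\mu^X$ is constant there.
\end{itemize}
Closedness then follows because $\pi^*$ is injective and commutes with $\diff$.

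\emph{Nondegeneracy.} Here \eqref{eq:cos-mom-map-reeb-inv} enters. Since $\mathcal L_R\mu=0$, the Reeb field $R$ is tangent to $\mu^{-1}(\xi)$. Because $\iota_{X^\#}\alpha=0$ and $\iota_R\alpha=1$, the field $R$ is not in the orbit directions, so $\pi_*R$ is a nowhere-vanishing vector field on $M_\xi$ with $\iota_{\pi_*R}\alpha_\xi=1$ and $\iota_{\pi_*R}\beta_\xi=0$.

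It therefore suffices to show that $\beta_\xi$ is nondegenerate on $\ker\alpha_\xi$; then $\alpha_\xi\wedge\beta_\xi^{m}\neq0$ with $2m+1=\dim M_\xi$. Pointwise, at $p\in\mu^{-1}(\xi)$:
\begin{itemize}
\item Let $V=\ker\alpha_p$, symplectic with form $\beta_p$. The vectors $X^\#_p$ lie in $V$.
\item We have $T_p\mu^{-1}(\xi)=\ker \diff\mu_p$. Intersected with $V$, this is $\{v\in V:\beta(X^\#,v)=0\ \forall X\in\g\}=(\g\cdot p)^{\beta\text{-}\perp}\cap V$, the $\beta$-orthogonal of the orbit inside $V$.
\item Equivariance shows that $\g\cdot p\cap T_p\mu^{-1}(\xi)=\g_\xi\cdot p$.
\item The linear algebra of symplectic reduction in $(V,\beta_p)$ then gives that the kernel of $\beta_p$ on $W=(\g\cdot p)^{\perp}\cap V$ is $W\cap W^{\perp}=\g_\xi\cdot p$. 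Hence $\beta_p$ descends to a nondegenerate form on $W/\g_\xi\cdot p$, which is exactly $\ker(\alpha_\xi)$ at $\pi(p)$.
\end{itemize}
A dimension count ($\dim\mu^{-1}(\xi)-\dim G_\xi$ is odd) is consistent with this.

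\emph{Main obstacle.} I expect the hardest step to be the careful identification of $T_p\mu^{-1}(\xi)\cap\ker\alpha$ with the symplectic orthogonal inside the leaf. This uses regularity of $\xi$, so that $\diff\mu_p$ is onto $\g^*$. It also needs $\diff\mu$ to vanish on $R$, which is exactly why the Reeb-invariance condition is imposed: it splits $T_p\mu^{-1}(\xi)=\langle R_p\rangle\oplus W$. Once this splitting is in place, the rest reduces to the classical Marsden--Weinstein linear algebra in the symplectic vector space $(\ker\alpha_p,\beta_p)$.
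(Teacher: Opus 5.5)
Your proof is correct. The paper does not prove this statement itself. It quotes it from Albert and uses it as a black box; the proof of \Cref{thm:reduction-commutes-thickening} already presupposes the reduced structure $(\alpha_\xi,\beta_\xi)$. So there is no internal argument to match, and yours is a complete, self-contained proof. It first descends $i^*\alpha$ and $i^*\beta$ as $G_\xi$-basic forms, then splits off $R_p$ and runs the Marsden--Weinstein linear algebra in the symplectic vector space $(\ker\alpha_p,\beta_p)$.

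A route closer to the paper's own machinery would keep your descent step and obtain nondegeneracy from the compact symplectic thickening instead. The classical Marsden--Weinstein theorem for $(\widehat M,\widehat\omega)$ with $\hat\mu=\operatorname{pr}_1^*\mu$ makes $\beta_\xi+\diff\theta\wedge\alpha_\xi$ symplectic on $M_\xi\times\mathbb S^1$. Since $(\beta_\xi+\diff\theta\wedge\alpha_\xi)^{m+1}=(m+1)\,\diff\theta\wedge\alpha_\xi\wedge\beta_\xi^m$, this forces $\alpha_\xi\wedge\beta_\xi^m\neq0$. That route outsources the linear algebra to the symplectic theorem. Yours is elementary and also exhibits the reduced Reeb field $\pi_*R$.

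A few small points:
\begin{itemize}
\item Reading ``fibrating'' as free and proper, with $\pi$ a principal bundle, is stronger than you need. All you use is that $\pi$ is a surjective submersion whose fibres are the $G_\xi$-orbits. That gives both the descent criterion for $G_\xi$-invariant horizontal forms and $\ker\diff\pi_p=\g_\xi\cdot p$.
\item Condition \eqref{eq:cos-mom-map-reeb-inv} is not an extra ingredient, contrary to your ``main obstacle'' paragraph. From $\iota_{X^\#}\beta=-\diff\mu^X$ and $\iota_R\beta=0$ you get $\diff\mu^X(R)=-\beta(X^\#,R)=(\iota_R\beta)(X^\#)=0$. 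Hence the splitting $T_p\mu^{-1}(\xi)=\langle R_p\rangle\oplus W$ is automatic.
\item You use equivariance of $\mu$, both for $G_\xi$ to preserve $\mu^{-1}(\xi)$ and for $\g\cdot p\cap\ker\diff\mu_p=\g_\xi\cdot p$. This is implicit in ``Hamiltonian'' but not listed in \eqref{eq:def-hamiltonian-cosymplectic}, so say so explicitly.
\item $\pi_*R$ is a well-defined vector field only because $G$ preserves $\alpha$ and $\beta$, hence $R$. For the volume condition, though, the pointwise fact $\alpha_\xi(\diff\pi_p R_p)=1$ already suffices.
\end{itemize}
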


Following our original motivation for the introduction of equivariant cosymplectic structures, we now make explicit the equivalence between Hamiltonian actions and equivariant cosymplectic structures in the spirit of \Cref{ex:equiv-symp-form}.

\begin{proposition} \label{prop:equiv-cosymplectic-hamiltonian-correspondence}
    Let $(M^{2n+1},\alpha,\beta)$ be cosymplectic and $G$ compact and connected. Hamiltonian $G$-actions on $M$ are in one-to-one correspondence with equivariant cosymplectic structures $(\underline{\alpha},\underline{\beta})$ extending $(\alpha,\beta)$ and satisfying $\mathcal{L}_R\underline{\beta}=0$.
\end{proposition}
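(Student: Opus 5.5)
The plan is to mirror \Cref{ex:equiv-symp-form}: expand the equivariant differential of each form in the Cartan model and read off the three conditions \eqref{eq:def-hamiltonian-cosymplectic} one by one. Throughout, fix a basis $X_1,\dots,X_m$ of $\g$ with dual basis $\sigma^1,\dots,\sigma^m$, and write $\mu=\sum_i f_i\otimes\sigma^i$, so that $\mu^X=\sum_i f_i\,\sigma^i(X)$.

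\textbf{From a Hamiltonian action to an equivariant structure.} Suppose the action is Hamiltonian with moment map $\mu$. Since $\alpha$ is $G$-invariant, $\underline{\alpha}=\alpha$ is an element of $\Omega^1_G(M)$. Its equivariant differential is
\[
\diff_G\alpha=\diff\alpha-\sum_i(\iota_{X_i^\#}\alpha)\,\sigma^i .
\]
This vanishes because $\diff\alpha=0$ and $\iota_{X^\#}\alpha=0$.

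For $\underline{\beta}=\beta-\mu$, the computation of \Cref{ex:equiv-symp-form} applies verbatim and gives
\[
\diff_G\underline{\beta}=\diff\beta-\sum_i\bigl(\iota_{X_i^\#}\beta+\diff f_i\bigr)\,\sigma^i .
\]
This vanishes by the second condition in \eqref{eq:def-hamiltonian-cosymplectic}. We also need $\underline{\beta}$ to be $G$-invariant, i.e.\ to lie in the Cartan complex. This holds because the moment map is equivariant, which is part of what we take "Hamiltonian" to mean (as in \Cref{ex:equiv-symp-form}). Finally, $\mathcal L_R\underline{\beta}=\mathcal L_R\beta-\sum_i(\mathcal L_R f_i)\,\sigma^i$. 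Here $\mathcal L_R\beta=\iota_R\diff\beta+\diff\iota_R\beta=0$, and $\mathcal L_R f_i=0$ is exactly \eqref{eq:cos-mom-map-reeb-inv}. The nondegeneracy condition $\alpha\wedge\beta^n\neq0$ is inherited directly from $(\alpha,\beta)$.

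\textbf{From an equivariant structure to a Hamiltonian action.} Conversely, take an equivariant cosymplectic structure $(\alpha,\beta-\mu)$ extending $(\alpha,\beta)$ with $\mathcal L_R\underline{\beta}=0$. The first step is to observe that $\diff_G$ respects the polynomial grading, so $\diff_G\alpha=0$ splits by $\operatorname{Sym}$-degree. The degree-zero part gives $\diff\alpha=0$, and the degree-one part gives $\iota_{X_i^\#}\alpha=0$ for each $i$. Splitting $\diff_G(\beta-\mu)=0$ in the same way gives $\iota_{X_i^\#}\beta=-\diff f_i$. Splitting $\mathcal L_R\underline{\beta}=0$ by degree gives $\mathcal L_Rf_i=0$. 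Invariance of $\underline{\beta}$ under $G$ yields equivariance of $\mu$, exactly as in \Cref{ex:equiv-symp-form}.

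It remains to check that the action preserves $\alpha$ and $\beta$. Since $G$ is connected, it suffices to check the infinitesimal statement. By Cartan's formula, $\mathcal L_{X^\#}\alpha=\diff\iota_{X^\#}\alpha=0$ and $\mathcal L_{X^\#}\beta=\diff\iota_{X^\#}\beta=-\diff\diff\mu^X=0$.

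\textbf{Bijectivity.} The two constructions are mutually inverse, since both the action data and the structure data are determined by $\mu$ once $(\alpha,\beta)$ is fixed. More precisely, the correspondence is between moment maps and the extensions $\underline{\beta}=\beta-\mu$.

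\textbf{Expected obstacle.} The main subtlety is bookkeeping rather than analysis. First, one must make precise that "Hamiltonian action" includes equivariance of $\mu$, so that $\underline{\beta}$ genuinely lies in the $G$-invariant Cartan complex. Second, one must interpret $\mathcal L_R$ on $\Omega_G^\bullet(M)$ as acting on the form factor only, and check that this operator preserves invariants. For the latter, I would first show that $R$ commutes with the fundamental vector fields. The reason is that the action preserves $\alpha$ and $\beta$, and $R$ is uniquely determined by $(\alpha,\beta)$, so $R$ is $G$-invariant.
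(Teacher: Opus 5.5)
Your proposal is correct and follows the paper's proof. Both arguments expand $\diff_G\underline{\alpha}$, $\diff_G\underline{\beta}$ and $\mathcal L_R\underline{\beta}$ in the Cartan model, read off the conditions \eqref{eq:def-hamiltonian-cosymplectic}, and get the converse by splitting by polynomial degree; your extra bookkeeping (equivariance of $\mu$ so that $\underline{\beta}$ lies in the invariant Cartan complex, and $G$-invariance of $R$ so that $\mathcal L_R$ makes sense on $\Omega_G^\bullet(M)$) fills in points the paper leaves implicit.
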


\begin{proof}
    Given a Hamiltonian action with moment map $\mu$, define
    \[
        \underline{\alpha}=\alpha,
        \qquad
        \underline{\beta}=\beta-\mu.
    \]
    Then $\diff_G\underline{\alpha}=0$ is equivalent to $\diff\alpha=0$ and $\iota_{X^\#}\alpha=0$, while $\diff_G\underline{\beta}=0$ is equivalent to $\diff\beta=0$ and $\iota_{X^\#}\beta=-\diff\mu^X$. The condition $\mathcal L_R\underline{\beta}=0$ is precisely $\mathcal L_R\mu=0$.
    
    Conversely, any such pair $(\underline{\alpha},\underline{\beta})$ recovers a Hamiltonian action with moment map $\mu$.
\end{proof}

Thus equivariant cosymplectic structures provide an algebraic framework for cosymplectic reduction.

\section{A symplectic embedding of cosymplectic manifolds} \label{sec:symplectic-thickening}

In this section we introduce the technique of symplectic thickening of a cosymplectic manifold. This technique produces a compact symplectic manifold whenever the base manifold is compact, and furthermore allows one to extend Hamiltonian actions in the sense of \Cref{thm:cos-reduction} to Hamiltonian actions in the symplectic setting. We will use the tight interplay between cosymplectic manifolds and their compact symplectic thickenings in \Cref{sec:convexity-cosymplectic,sec:localization-cosymplectic,,sec:kirwan-cosymplectic,,sec:var-coh-class}.

\begin{proposition}[compact symplectic thickening of a cosymplectic manifold] \label{prop:symplectic-thickening}
    Let $(M^{2n+1},\alpha,\beta)$ be a cosymplectic manifold, that is,
    \[
        \diff \alpha=0,\qquad \diff \beta=0,\qquad \alpha\wedge \beta^n\neq 0.
    \]
    Let $\theta$ denote the angular coordinate on $\mathbb{S}^1=\mathbb{R}/\mathbb{Z}$, and let $\diff\theta$ be the standard nowhere vanishing closed $1$-form on $\mathbb{S}^1$. Then the product manifold $\widehat{M} \coloneqq M \times \mathbb{S}^1$ carries a natural symplectic form\footnote{
        To be completely precise, the form should be defined as $\omega = \operatorname{pr}_1^* \beta + \operatorname{pr}_2^*(\diff \theta) \wedge \operatorname{pr}_1^* \alpha$. However, we will assume this expression and drop the pullback operators for the sake of simplicity in the notation.
    }
    \[
        \widehat{\omega} \coloneqq \beta + \diff\theta \wedge \alpha.
    \]
    In particular, if $M$ is compact, then $\widehat{M}$ is compact and the embedding
    \begin{equation} \label{eq:cosymplectic-inclusion}
        \begin{array}{rccc}
            \imath_{\theta_0} \colon & M & \xhookrightarrow{} & M \times \mathbb{S}^1\\
             & x & \longmapsto & (x, \theta_0)
        \end{array}
    \end{equation}
    for any fixed $\theta_0 \in \mathbb{S}^1$ realizes $M$ as a compact hypersurface in the symplectic manifold $(M \times \mathbb{S}^1,\widehat{\omega})$.
\end{proposition}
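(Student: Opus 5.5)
The proof has two parts: closedness is immediate, and non-degeneracy follows from computing the top power of $\widehat{\omega}$ and showing it is a volume form. The hypersurface statement then follows from elementary topology.

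\textbf{Closedness.} Since pullbacks commute with $\diff$ and both $\alpha,\beta$ are closed on $M$ and $\diff\theta$ is closed on $\mathbb{S}^1$, we get
\[
\diff\widehat{\omega}=\diff\beta+\diff(\diff\theta)\wedge\alpha-\diff\theta\wedge\diff\alpha=0 .
\]

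\textbf{Non-degeneracy.} We compute $\widehat{\omega}^{n+1}$ on the $(2n+2)$-dimensional manifold $\widehat M$. Set $\gamma\coloneqq\diff\theta\wedge\alpha$. This is a $2$-form, so it commutes with $\beta$, and $\gamma\wedge\gamma=0$ because $\diff\theta\wedge\diff\theta=0$. The binomial expansion therefore truncates after the linear term:
\[
\widehat{\omega}^{n+1}=\beta^{n+1}+(n+1)\,\beta^n\wedge\diff\theta\wedge\alpha .
\]
The pullback of $\beta^{n+1}$ is a $(2n+2)$-form pulled back from the $(2n+1)$-manifold $M$, so it vanishes. Reordering the remaining term (moving the $1$-form $\diff\theta$ past the even-degree form $\beta^n$ introduces no sign, and swapping $\diff\theta$ with $\alpha$ gives one sign) yields
\[
\widehat{\omega}^{n+1}=-(n+1)\,\alpha\wedge\beta^n\wedge\diff\theta .
\]
By the cosymplectic condition \eqref{eq:cosymplectic-volume}, $\alpha\wedge\beta^n$ is a volume form on $M$, and $\diff\theta$ is a volume form on $\mathbb{S}^1$. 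Their product is a volume form on $M\times\mathbb{S}^1$. Hence $\widehat{\omega}^{n+1}$ is nowhere vanishing, so $\widehat{\omega}$ is non-degenerate and therefore symplectic. The overall sign only fixes which orientation is used.

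\textbf{Compactness and the hypersurface.} A product of compact spaces is compact, so $\widehat M$ is compact when $M$ is. The map $\imath_{\theta_0}$ is a closed embedding: it is a section of the projection $M\times\mathbb{S}^1\to M$, and its image is the level set $M\times\{\theta_0\}$. It has codimension one, so it is a compact hypersurface.

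As a consistency check, which the proof does not need, the restriction $\imath_{\theta_0}^*\widehat{\omega}=\beta$. This recovers the original cosymplectic data on the hypersurface, with $\alpha=\iota_{\partial_\theta}\widehat{\omega}$ restricted there (up to sign).

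There is no genuine obstacle. The only step requiring care is the wedge-power computation: one must verify that the binomial expansion is legitimate, which holds because $2$-forms commute, and track the reordering sign.
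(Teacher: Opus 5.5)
Your proof is correct and follows essentially the same route as the paper. Closedness is checked termwise, and the truncated binomial expansion gives $\widehat{\omega}^{n+1}=\beta^{n+1}+(n+1)\,\beta^n\wedge\diff\theta\wedge\alpha$, where $\beta^{n+1}=0$ because it is pulled back from the $(2n+1)$-manifold $M$; your $-(n+1)\,\alpha\wedge\beta^n\wedge\diff\theta$ agrees with the paper's $(n+1)\,\diff\theta\wedge\alpha\wedge\beta^n$. Your treatment of compactness and of $M\times\{\theta_0\}$ as a closed codimension-one submanifold fills in a point the paper leaves implicit, and in your side remark the hedge ``up to sign'' is unnecessary, since $\iota_{\partial_\theta}\widehat{\omega}=\alpha$ exactly.
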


\begin{proof}
    Since $\diff\beta=0$ and $\diff\alpha=0$, and since $\diff(\diff\theta)=0$, we have $\diff\widehat{\omega}=0$. Thus $\widehat{\omega}$ is closed. To prove non-degeneracy, we compute
    \[
        \widehat{\omega}^{n+1}= (\beta + \diff \theta\wedge \alpha)^{n+1} = \beta^{n+1}+(n+1)\beta^n\wedge\diff\theta\wedge\alpha.
    \]
    Now $\beta^{n+1}=0$ on $M\times\mathbb{S}^1$, since $\beta$ has rank at most $2n$ and is pulled back from $M$. Hence
    \begin{equation} \label{eq:thicken-sympl-vol}
        \widehat{\omega}^{n+1} = (n+1)\diff\theta\wedge\alpha\wedge\beta^n.
    \end{equation}
    Since $\alpha\wedge\beta^n$ is a volume form on $M$ and $\diff\theta$ is nowhere vanishing on $\mathbb{S}^1$, it follows that $\widehat{\omega}^{n+1}$ is a nowhere vanishing $(2n+2)$-form on $M\times\mathbb{S}^1$. Therefore $\widehat{\omega}$ is non-degenerate and hence symplectic.
\end{proof}

\begin{remark} \label{rmk:cosymp-struct-from-symp-thick}
    For any fixed $\theta_0\in\mathbb{S}^1$, the inclusion \eqref{eq:cosymplectic-inclusion} satisfies
    \begin{equation} \label{eq:sympl-form-from-thickening}
        \imath_{\theta_0}^* \widehat{\omega}=\beta.
    \end{equation}
    Moreover, if $\partial_\theta$ denotes the vector field generated by the $\mathbb{S}^1$-factor, then
    \begin{equation} \label{eq:def-form-from-thickening}
        \iota_{\partial_\theta} \widehat{\omega} = \alpha.
    \end{equation}
    Thus the cosymplectic structure $(\alpha,\beta)$ on $M$ is recovered from the ambient symplectic form $\omega$ and the distinguished transverse direction $\partial_\theta$.

    Moreover, under the isomorphism $\mathrm{H}^\bullet(M) \otimes \mathrm{H}^\bullet(\mathbb{S}^1) \cong \mathrm{H}^\bullet(M) \oplus \mathrm{H}^{\bullet - 1}(M)$ these maps provide a concrete realization of the Künneth isomorphism
    \begin{equation*}
        \begin{array}{ccc}
            \mathrm{H}^\bullet(M \times \mathbb{S}^1) & \longrightarrow & \mathrm{H}^\bullet(M) \oplus \mathrm{H}^{\bullet - 1}(M) \\[1pt]
            [\omega] & \longmapsto & [\imath_{\theta_0}^* \omega] \oplus [\iota_{\partial_\theta} \omega]
        \end{array}.
    \end{equation*}
    As a consequence, formulae \eqref{eq:sympl-form-from-thickening} and \eqref{eq:def-form-from-thickening} also recover the cohomology classes $[\alpha], [\beta]$ from $[\omega]$.
\end{remark}

The symplectic thickening procedure admits a straightforward generalization when considering equivariant cosymplectic structures.

\begin{proposition} \label{prop:equivariant-symplectic-thickening}
    Let $(\underline{\alpha}, \underline{\beta})$ be an equivariant cosymplectic structure in $M$. If we extend the $G$-action in $M$ to the symplectic thickening $M \times \mathbb{S}^1$ as
    \begin{equation} \label{eq:cosymplectic-action-extension}
        \begin{array}{ccc}
            G \times M \times \mathbb{S}^1 & \longrightarrow & M \times \mathbb{S}^1 \\
            (g, x, \theta) & \longmapsto & (g \cdot x, \theta)
        \end{array},
    \end{equation}
    the form $\underline{\omega} = \underline{\beta} + \diff \theta \wedge \underline{\alpha}$ is an equivariantly closed symplectic form.
\end{proposition}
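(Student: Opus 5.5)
The plan is to check three things about $\underline{\omega} = \beta - \mu + \diff\theta\wedge\alpha$, regarded as an element of $\Omega_G^2(M\times\mathbb{S}^1)$. First, that it lies in the Cartan complex, i.e.\ is $G$-invariant. Second, that its degree-two part $\widehat{\omega}$ is non-degenerate. Third, that it is $\diff_G$-closed. Throughout I pull $\alpha$, $\beta$ and $\mu$ back along $\operatorname{pr}_1$, as in the footnote to \Cref{prop:symplectic-thickening}.

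For the first two points, I use that the extended action \eqref{eq:cosymplectic-action-extension} acts trivially on the $\mathbb{S}^1$-factor. Hence $\operatorname{pr}_1$ is $G$-equivariant and $\operatorname{pr}_2$ is $G$-invariant. Pullback along an equivariant map sends Cartan complexes to Cartan complexes and commutes with $\diff_G$, so $\operatorname{pr}_1^*\underline{\alpha}$ and $\operatorname{pr}_1^*\underline{\beta}$ are equivariantly closed elements of $\Omega_G^\bullet(M\times\mathbb{S}^1)$. Likewise $\diff\theta$ is $G$-invariant, and it is equivariantly closed: $\diff(\diff\theta)=0$, and $\iota_{X^\#}\diff\theta=0$ because the fundamental vector fields of the extended action are $(X^\#_M,0)$ and so have no $\partial_\theta$-component. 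The underlying form $\widehat{\omega}=\beta+\diff\theta\wedge\alpha$ is already symplectic by \Cref{prop:symplectic-thickening}, which gives non-degeneracy.

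For closedness, I use that $\diff_G$ is a graded derivation of the Cartan algebra, since $\diff$ and each $\iota_{X_i^\#}$ are antiderivations and the $\sigma^i$ are even of degree two. This gives
\[
\diff_G\underline{\omega}=\diff_G\underline{\beta}+\diff_G(\diff\theta)\wedge\underline{\alpha}-\diff\theta\wedge\diff_G\underline{\alpha}=0 .
\]
As a sanity check I would also verify this directly. Its form part is $\diff\widehat\omega=0$. Its $\sigma^i$-component is
\[
-\bigl(\iota_{X_i^\#}\beta+\diff f_i+\iota_{X_i^\#}(\diff\theta\wedge\alpha)\bigr),
\qquad
\iota_{X_i^\#}(\diff\theta\wedge\alpha)=-\diff\theta\,\iota_{X_i^\#}\alpha=0 .
\]
This vanishes because $\diff_G\underline\alpha=0$ forces $\iota_{X^\#}\alpha=0$ and $\diff_G\underline\beta=0$ forces $\iota_{X^\#}\beta=-\diff\mu^X$. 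It follows, as in \Cref{ex:equiv-symp-form}, that $\mu\circ\operatorname{pr}_1$ is a moment map for the extended action.

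No step is a genuine obstacle; the argument is bookkeeping. The only point needing care is invariance, namely that $\underline{\omega}$ really lies in $(\Omega^\bullet\otimes\operatorname{Sym}\mathfrak g^*)^G$ and so is a legitimate element of the Cartan complex. This holds because $\mu\circ\operatorname{pr}_1$ inherits the coadjoint equivariance of $\mu$, and $\diff\theta$ is fixed by the action.
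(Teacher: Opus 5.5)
Your proposal is correct and follows the same route as the paper: both note that the fundamental vector fields of the extended action are $(X^\#,0)$, deduce $\diff_G(\diff\theta)=0$, and conclude $\diff_G\underline{\omega}=0$ from $\diff_G\underline{\alpha}=\diff_G\underline{\beta}=0$ via the Leibniz rule for $\diff_G$. Your explicit checks of $G$-invariance (so that $\underline{\omega}$ lies in the Cartan complex) and of non-degeneracy (via the compact symplectic thickening proposition) fill in points the paper leaves implicit.
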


\begin{proof}
    It is clear from expression \eqref{eq:cosymplectic-action-extension} that the map defines a group action. Moreover, for every $X \in \mathfrak{g}$ the fundamental vector field is given by $(X^\#, 0)$.
    
    To check that the action is Hamiltonian it suffices to prove that the equivariant form $\underline{\omega} = \underline{\beta} + \diff \theta \wedge \underline{\alpha} = \beta + \diff \theta \wedge \alpha - \operatorname{pr}_1^*\mu$ is equivariantly closed. However, since
    \begin{equation*}
        \diff_G (\diff \theta) = \diff(\diff \theta) - \sum_{i = 1}^m (\iota_{(X_i^\#, 0)} \diff \theta) \otimes \sigma^i = 0
    \end{equation*}
    and $\diff_G \underline{\alpha} = \diff_G \underline{\beta} = 0$ by assumption, we have
    \begin{equation*}
        \diff_G \underline{\omega} = \diff_G \underline{\beta} + (\diff_G \diff \theta) \wedge \underline{\alpha} - \diff \theta \wedge \diff_G \underline{\alpha} = 0. \qedhere
    \end{equation*}
\end{proof}

Recall, however, that there exists a correspondence between equivariantly closed extensions of a symplectic structure and Hamiltonian $G$-actions (cf. \Cref{ex:equiv-symp-form}). Similarly, \Cref{thm:leafwise-equiv-closed-characterisation} an analogous correspondence between equivariant cosymplectic structures and cosymplectic Hamiltonian $G$-actions. Since the equivariant symplectic form is given by
\begin{equation*}
    \underline{\omega} = \underline{\beta} + \diff \theta \wedge \underline{\alpha} = (\beta + \diff \theta \wedge \alpha) - \mu = \omega - \mu,
\end{equation*}
we have the following

\begin{corollary} \label{cor:symp-thick-ham-ext}
    Let $(M^{2n+1},\alpha,\beta)$ be a cosymplectic manifold and consider its symplectic thickening $(M \times \mathbb{S}^1, \beta + \diff \theta \wedge \alpha)$. If $G$ acts in Hamiltonian fashion on $M$ with moment map $\mu \colon M \to \mathfrak{g}^*$, then the extended group action \eqref{eq:cosymplectic-action-extension} is Hamiltonian with moment map $\hat{\mu}=\operatorname{pr}_1^*\mu$.
\end{corollary}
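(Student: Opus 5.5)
The plan is to go through the Cartan-model dictionary. First, turn the Hamiltonian action on $M$ into an equivariant cosymplectic structure using \Cref{prop:equiv-cosymplectic-hamiltonian-correspondence}. Then thicken it with \Cref{prop:equivariant-symplectic-thickening}. Finally, read off the moment map from the resulting equivariantly closed symplectic form via \Cref{ex:equiv-symp-form}.

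\emph{Step 1.} Given the Hamiltonian action with moment map $\mu$, the conditions \eqref{eq:def-hamiltonian-cosymplectic} give $\iota_{X^\#}\alpha=0$ and $\iota_{X^\#}\beta=-\diff\mu^X$. By \Cref{prop:equiv-cosymplectic-hamiltonian-correspondence}, these say exactly that $\underline{\alpha}=\alpha$ and $\underline{\beta}=\beta-\mu$ are $\diff_G$-closed. They are also $G$-invariant elements of the Cartan complex, because the action preserves $\alpha$ and $\beta$ and $\mu$ is equivariant. So $(\underline{\alpha},\underline{\beta})$ is an equivariant cosymplectic structure in the sense of \Cref{def:equivariant-cosymplectic}.

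\emph{Step 2.} Extend the action trivially in the $\mathbb{S}^1$ factor as in \eqref{eq:cosymplectic-action-extension}. The fundamental vector fields are then $(X^\#,0)$. \Cref{prop:equivariant-symplectic-thickening} shows that $\underline{\omega}=\underline{\beta}+\diff\theta\wedge\underline{\alpha}$ is equivariantly closed. Expanding gives $\underline{\omega}=\widehat{\omega}-\operatorname{pr}_1^*\mu$, and $\widehat{\omega}$ is symplectic by \Cref{prop:symplectic-thickening}.

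\emph{Step 3.} Apply the correspondence of \Cref{ex:equiv-symp-form} to $\underline{\omega}=\widehat{\omega}-\hat{\mu}$ with $\hat{\mu}=\operatorname{pr}_1^*\mu$. Equivariant closedness gives $\iota_{(X^\#,0)}\widehat{\omega}=-\diff\hat{\mu}^X$, so the action is weakly Hamiltonian with moment map $\hat{\mu}$. The direct check is $\iota_{(X^\#,0)}(\beta+\diff\theta\wedge\alpha)=\iota_{X^\#}\beta-\diff\theta\,\iota_{X^\#}\alpha=-\diff\mu^X$. Equivariance of $\hat{\mu}$ follows from that of $\mu$, since $\operatorname{pr}_1$ intertwines the two actions. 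Equivalently, $G$-invariance of $\underline{\omega}$ is inherited from that of $\underline{\alpha}$, $\underline{\beta}$ and $\diff\theta$. Hence the extended action is Hamiltonian.

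The main point needing care is Step 1: \Cref{def:equivariant-cosymplectic} requires $G$-invariance in the Cartan complex, and this rests on equivariance of $\mu$. The Reeb condition \eqref{eq:cos-mom-map-reeb-inv} is not needed for the lifted action to be Hamiltonian. It only matters for the bijective correspondence in \Cref{prop:equiv-cosymplectic-hamiltonian-correspondence}.
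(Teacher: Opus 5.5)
Your proposal is correct and follows essentially the same route as the paper: it converts the Hamiltonian action into an equivariant cosymplectic structure, applies \Cref{prop:equivariant-symplectic-thickening} to get the equivariantly closed form $\underline{\omega}=\widehat{\omega}-\operatorname{pr}_1^*\mu$, and reads off the moment map via the dictionary of \Cref{ex:equiv-symp-form}. Your direct check $\iota_{(X^\#,0)}\widehat{\omega}=-\diff\mu^X$, and your remarks that equivariance of $\mu$ is needed for $G$-invariance while the Reeb condition is not needed in this direction, are accurate refinements of that argument.
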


The compatibility between cosymplectic Hamiltonian actions and compact symplectic thickening extends to reduction. This makes precise the idea that symplectic reduction of the thickened Hamiltonian space is exactly the compact symplectic thickening of the cosymplectic reduced space.

\begin{theorem} \label{thm:reduction-commutes-thickening}
    Let $\xi\in\mathfrak{g}^*$ be a regular value of $\mu$, and suppose that the
    cosymplectic reduced space
    \[
       M_\xi=\mu^{-1}(\xi)/G_\xi
    \]
    is smooth. Then $\xi$ is also a regular value of $\hat{\mu}$, and the symplectic reduction of the compact symplectic thickening satisfies
    \[
       \widehat{M}_\xi \coloneqq \widehat\mu^{-1}(\xi)/G_\xi \cong M_\xi \times \mathbb{S}^1.
    \]
    Moreover, under this identification the reduced symplectic form is
    \begin{equation} \label{eq:red-sympl-str}
        \widehat\omega_\xi=\beta_\xi+ \diff \theta\wedge\alpha_\xi,
    \end{equation}
    where $(\alpha_\xi,\beta_\xi)$ is the cosymplectic structure on $M_\xi$ obtained by cosymplectic reduction.
\end{theorem}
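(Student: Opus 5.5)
The argument works directly with the product structure, since $\widehat\mu=\operatorname{pr}_1^*\mu$ does not depend on $\theta$.

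\textbf{Step 1: regularity and the level set.} From $\widehat\mu=\operatorname{pr}_1^*\mu$ we get
\[
\widehat\mu^{-1}(\xi)=\mu^{-1}(\xi)\times\mathbb{S}^1 .
\]
At each point $(x,\theta)$ the differential is $\diff\widehat\mu_{(x,\theta)}=\diff\mu_x\circ\diff\operatorname{pr}_1$. The projection is a submersion, so $\diff\widehat\mu$ is surjective exactly when $\diff\mu_x$ is. Hence $\xi$ is a regular value of $\widehat\mu$.

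\textbf{Step 2: the quotient.} The extended action \eqref{eq:cosymplectic-action-extension} is trivial on the $\mathbb{S}^1$ factor. The stabiliser of $\xi$ is the same group $G_\xi$ in both settings, since it depends only on the coadjoint action. Therefore
\[
\widehat\mu^{-1}(\xi)/G_\xi=\bigl(\mu^{-1}(\xi)/G_\xi\bigr)\times\mathbb{S}^1=M_\xi\times\mathbb{S}^1 .
\]
Smoothness of $M_\xi$ means $\pi\colon\mu^{-1}(\xi)\to M_\xi$ is a smooth submersion (a fibration). Consequently $\pi\times\operatorname{id}$ is a smooth submersion, and the identification is a diffeomorphism.

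\textbf{Step 3: the reduced form.} The reduced symplectic form $\widehat\omega_\xi$ is characterised by
\[
(\pi\times\operatorname{id})^*\widehat\omega_\xi=\widehat\imath^{\,*}\widehat\omega,
\]
where $\widehat\imath=\imath\times\operatorname{id}$ and $\imath\colon\mu^{-1}(\xi)\hookrightarrow M$. Cosymplectic reduction (\Cref{thm:cos-reduction}, following Albert) gives $(\alpha_\xi,\beta_\xi)$ characterised by $\pi^*\alpha_\xi=\imath^*\alpha$ and $\pi^*\beta_\xi=\imath^*\beta$. We then compute
\[
\widehat\imath^{\,*}\widehat\omega=\imath^*\beta+\diff\theta\wedge\imath^*\alpha=(\pi\times\operatorname{id})^*\bigl(\beta_\xi+\diff\theta\wedge\alpha_\xi\bigr).
\]
Since $\pi\times\operatorname{id}$ is a surjective submersion, pullback along it is injective on forms, and \eqref{eq:red-sympl-str} follows. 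As a consistency check, \Cref{prop:symplectic-thickening} applied to $(M_\xi,\alpha_\xi,\beta_\xi)$ confirms that the right-hand side is symplectic, as the reduced form must be.

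\textbf{Main obstacle.} The delicate point is Step 3. We must make sure that Albert's reduced forms are indeed characterised by the pullback relations $\pi^*\alpha_\xi=\imath^*\alpha$ and $\pi^*\beta_\xi=\imath^*\beta$. In particular, $\imath^*\alpha$ must be $G_\xi$-basic. Invariance is given. Horizontality follows from $\iota_{X^\#}\alpha=0$ in \eqref{eq:def-hamiltonian-cosymplectic}. For $\imath^*\beta$, horizontality on $\mu^{-1}(\xi)$ comes from the standard symplectic argument: for $v$ tangent to $\mu^{-1}(\xi)$ and $X\in\g_\xi$, we have $\beta(X^\#,v)=-\diff\mu^X(v)=0$. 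If Albert's construction is phrased differently, I would reduce to this characterisation by checking these basicness conditions directly.
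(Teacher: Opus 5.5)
Your proposal is correct and follows essentially the same route as the paper: you identify $\widehat\mu^{-1}(\xi)=\mu^{-1}(\xi)\times\mathbb{S}^1$, use that $G_\xi$ acts trivially on the circle factor to get $\widehat M_\xi\cong M_\xi\times\mathbb{S}^1$, and pull back $\widehat\omega$ using the characterising relations $\pi^*\alpha_\xi=\imath^*\alpha$, $\pi^*\beta_\xi=\imath^*\beta$ together with uniqueness of the reduced form. Your extra checks fill in details the paper leaves implicit: regularity of $\xi$ for $\widehat\mu$ via $\diff\widehat\mu=\diff\mu\circ\diff\operatorname{pr}_1$, injectivity of pullback along the surjective submersion $\pi\times\operatorname{id}$, and basicness of $\imath^*\alpha$ and $\imath^*\beta$.
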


\begin{proof}
    Since $\hat{\mu} = \operatorname{pr}_1^* \mu$ from \Cref{cor:symp-thick-ham-ext}, we have $\hat{\mu}^{-1}(\xi)=\mu^{-1}(\xi) \times \mathbb{S}^1$. Since the action of $G$ is trivial in the second factor, and hence that of $G_\xi$, we have the following commutative diagram
    \begin{equation} \label{eq:triv-red-thick}
        \begin{tikzcd}
        	{\hat{\mu}^{-1}(\xi)} & {\mu^{-1}(\xi) \times \mathbb{S}^1} \\
        	{\widehat{M}_{\xi}} & {M_\xi \times \mathbb{S}^1}
        	\arrow[from=1-1, to=1-2]
        	\arrow[from=1-1, to=2-1]
        	\arrow[from=1-2, to=2-2]
        	\arrow[from=2-1, to=2-2]
        \end{tikzcd}
    \end{equation}
    This proves the isomorphism $\widehat{M}_\xi \cong M_\xi \times \mathbb{S}^1$ as smooth manifolds.

    To show that the Marsden--Weinstein symplectic form is given by \eqref{eq:red-sympl-str}, let us denote by $j \colon \mu^{-1}(\xi)\hookrightarrow M$ and $\hat{\jmath} \colon \hat{\mu}^{-1}(\xi) \hookrightarrow \widehat M$ the inclusions, and let $\pi \colon \mu^{-1}(\xi)\to M_\xi$ and $\hat{\pi} \colon \hat\mu^{-1}(\xi) \to \widehat{M}_\xi$ be the quotient maps. By the definition of cosymplectic reduction (cf. \Cref{thm:cos-reduction}), the reduced cosymplectic structure is uniquely determined by the equations $j^*\alpha=\pi^*\alpha_\xi$ and $j^*\beta=\pi^*\beta_\xi$. Therefore, the corresponding symplectic thickening satisfies
    \[
       \hat{\jmath}^*\widehat\omega
       = \hat{\jmath}^*(\beta + \diff \theta \wedge \alpha)
       = \pi^*\beta_\xi + \diff \theta\wedge\pi^*\alpha_\xi
       =\hat{\pi}^*(\beta_\xi+ \diff \theta\wedge\alpha_\xi).
    \]
    However, since the reduced symplectic form is unique, it follows that
    $\widehat{\omega}_\xi = \beta_\xi + \diff \theta\wedge\alpha_\xi$.
\end{proof}

\section{Torus actions in cosymplectic manifolds} \label{sec:convexity-cosymplectic}

\subsection{Convexity for torus actions}

A recurring theme in the theory of Hamiltonian actions is the \emph{convexity} of the image of the moment map for Hamiltonian actions of tori $\mathbb{T}^k$ in symplectic manifolds $(M,\omega)$. This result was originally proved by Atiyah~\cite{atiyah_convexity_1982} and Guillemin and Sternberg~\cite{guillemin_convexity_1982} and has important implications in the theory of Hamiltonian actions. It laid the groundwork for the outstanding result of Delzant~\cite{delzant_hamiltoniens_1988} characterizing Hamiltonian actions of tori in terms of so-called Delzant polytopes.

In this section we prove that, under the classical assumptions in the symplectic counterpart, the convexity theorem holds for Hamiltonian actions of tori on compact cosymplectic manifolds. We remark that this result was previously obtained by He \cite[Thm.\,4.3.5]{he_2010} and also observed by Bazzoni and Goertsches \cite[p.\,2]{bazzoni_toric_2019}. We reproduce it here to show how the procedure of symplectic thickening introduced in \Cref{prop:equivariant-symplectic-thickening} yields an immediate proof from its symplectic counterpart.

\begin{theorem}
    Let $(M,\alpha,\beta)$ be a compact cosymplectic manifold together with a Hamiltonian action of a torus $\mathbb{T}^k$. Then the image of the moment map $\mu(M) \subseteq \mathfrak{t}^* \cong \mathbb{R}^k$ is a convex polytope.
\end{theorem}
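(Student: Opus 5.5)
The plan is to reduce the statement to the Atiyah--Guillemin--Sternberg convexity theorem through the compact symplectic thickening of \Cref{prop:symplectic-thickening}. Put $\widehat{M} = M \times \mathbb{S}^1$ with $\widehat{\omega} = \beta + \diff\theta\wedge\alpha$. Since $M$ is compact, $\widehat{M}$ is a compact symplectic manifold. If $M$ is disconnected, I will argue one component at a time, as is standard in the symplectic statement, so assume $M$ is connected; then $\widehat{M}$ is connected as well.

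First, extend the $\mathbb{T}^k$-action trivially in the $\mathbb{S}^1$-factor as in \eqref{eq:cosymplectic-action-extension}. By \Cref{cor:symp-thick-ham-ext} this is a Hamiltonian torus action on $(\widehat{M},\widehat{\omega})$ with moment map $\hat{\mu} = \operatorname{pr}_1^*\mu$. Behind this lies \Cref{prop:equivariant-symplectic-thickening}, where equivariant closedness of $\beta - \mu + \diff\theta\wedge\alpha$ follows from $\iota_{X^\#}\alpha = 0$ and $\iota_{X^\#}\beta = -\diff\mu^X$. For a torus the coadjoint action is trivial, so equivariance of $\hat{\mu}$ simply means $\mathbb{T}^k$-invariance, and this is inherited from $\mu$. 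The torus still acts effectively or not exactly as it did on $M$, but convexity does not require effectiveness.

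Second, apply the Atiyah--Guillemin--Sternberg theorem to the compact connected Hamiltonian $\mathbb{T}^k$-space $(\widehat{M},\widehat{\omega},\hat{\mu})$. It gives that $\hat{\mu}(\widehat{M})$ is the convex hull of the images of the fixed-point components, hence a convex polytope in $\mathfrak{t}^*$.

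Third, observe that $\operatorname{pr}_1$ is surjective, so $\hat{\mu}(\widehat{M}) = \mu(\operatorname{pr}_1(\widehat{M})) = \mu(M)$. This yields the claim. As a refinement, the fixed points satisfy $\widehat{M}^{\mathbb{T}^k} = M^{\mathbb{T}^k}\times\mathbb{S}^1$, so the polytope is the convex hull of $\mu(M^{\mathbb{T}^k})$.

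The only step that needs real care is the first, namely confirming that the thickened action satisfies every hypothesis of the symplectic theorem. I do not expect a genuine obstacle there, since \Cref{cor:symp-thick-ham-ext} supplies the moment map, compactness comes from \Cref{prop:symplectic-thickening}, and connectedness of $\widehat{M}$ follows from that of $M$.
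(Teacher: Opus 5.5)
Your proposal is correct and follows the paper's proof essentially verbatim: you pass to the thickening $(M\times\mathbb{S}^1,\beta+\diff\theta\wedge\alpha)$ with the lifted moment map $\operatorname{pr}_1^*\mu$, apply Atiyah--Guillemin--Sternberg, and use surjectivity of $\operatorname{pr}_1$ to get $\hat{\mu}(\widehat{M})=\mu(M)$. One small correction: arguing component by component does not handle disconnected $M$, since a union of polytopes need not be convex and the moment map can be shifted by independent constants on different components. So connectedness is a genuine hypothesis of the statement, left implicit exactly as in the symplectic theorem, and it cannot be removed.
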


\begin{proof}
    By \Cref{prop:equivariant-symplectic-thickening}, the extended action on $(M\times\mathbb{S}^1, \beta + \diff\theta \wedge \alpha)$ is a Hamiltonian $\mathbb{T}^k$-action with moment map $\hat{\mu} = \operatorname{pr}_1^*\mu$. By the classical convexity theorem, $\hat{\mu}(M\times\mathbb S^1)\subseteq\mathfrak{t}^*$ is a convex polytope. Since $\hat{\mu}=\operatorname{pr}_1^*\mu$, we have
    \[
        \hat{\mu}(M\times\mathbb S^1) = \mu(M),
    \]
    and the result follows.
\end{proof}

\subsection{A cosymplectic Delzant theorem}

In the symplectic setting, Delzant's theorem gives a complete classification of toric symplectic manifolds in terms of their corresponding moment polytope.

\begin{theorem}[{Delzant~\cite{delzant_hamiltoniens_1988}}] \label{thm:delzant}
    The assignment
    \[
       (L^{2n},\omega_L,\mathbb{T}^n,\mu_L)\longmapsto \mu_L(L)\subset\mathfrak t^*
    \]
    induces a bijection between equivariant symplectomorphism classes of compact connected symplectic toric manifolds, with moment maps considered up to the addition of constants, and Delzant polytopes in \(\mathfrak{t}^*\) considered up to translation.
\end{theorem}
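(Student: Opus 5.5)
The statement just given is Delzant's classical theorem, which the paper cites as a black box from \cite{delzant_hamiltoniens_1988}. Its role here is to serve as input for the cosymplectic Delzant theorem, not to be reproved. Still, I would organise a proof along Delzant's original strategy in three parts: well-definedness, surjectivity, and injectivity.

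\emph{Well-definedness.} I would first check that the image is a Delzant polytope. By the Atiyah--Guillemin--Sternberg convexity theorem, $\mu_L(L)$ is the convex hull of the images of the fixed points. The equivariant Darboux (local normal form) theorem at a fixed point $p$ identifies a neighbourhood with $\mathbb{C}^n$ carrying a linear $\mathbb{T}^n$-action with weights $w_1,\dots,w_n$. Effectiveness forces the $w_i$ to form a $\mathbb{Z}$-basis of the weight lattice. Hence near $\mu_L(p)$ the polytope is the cone spanned by the $w_i$, which gives simplicity, rationality and smoothness. Changing $\mu_L$ by a constant translates the polytope, and an equivariant symplectomorphism preserves the image.

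\emph{Surjectivity.} I would use the Delzant construction. Write $\Delta=\{x:\langle x,v_i\rangle\le \lambda_i,\ i=1,\dots,d\}$ with primitive inward normals $v_i$, arranged so that $\Delta$ has exactly $d$ facets. Define $\pi\colon\mathbb{R}^d\to\mathbb{R}^n$ by $e_i\mapsto v_i$, and let $N=\ker(\mathbb{T}^d\to\mathbb{T}^n)$, which is a subtorus. Then reduce $\mathbb{C}^d$ by $N$ at the level determined by the $\lambda_i$. The smoothness condition on $\Delta$ is exactly what makes $N$ act freely on the level set, and compactness of $\Delta$ makes the level set compact. The residual torus $\mathbb{T}^d/N\cong\mathbb{T}^n$ acts on the reduced space with moment image $\Delta$.

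\emph{Injectivity.} This is the main obstacle. I would argue that two toric manifolds with the same polytope are locally equivariantly symplectomorphic over small open sets of $\Delta$, again via the local normal form. The resulting local isomorphisms differ by automorphisms of the toric structure over each open set. Such automorphisms are given by maps to $\mathbb{T}^n$ of the form $\exp$ of the gradient of a smooth function, and this requires showing that the relevant sheaf of automorphisms has vanishing $\mathrm{H}^1$, since $\Delta$ is contractible. The delicate point is smooth extension of the generating functions across faces of $\Delta$. I would handle it with a Schwarz-type theorem: smooth invariant functions on $\mathbb{C}^k$ are smooth functions of $|z_j|^2$.
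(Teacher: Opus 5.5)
The paper gives no proof of this statement. It quotes Delzant's theorem as an external input, used only to produce the leaf model $(L_\Delta,\omega_\Delta,\mathbb{T}^n,\mu_\Delta)$ in the cosymplectic classification. The one extra fact it needs, that $\mathbb{T}^n$-equivariant symplectomorphisms of $L_\Delta$ preserve $\mu_\Delta$, is proved separately in that argument. So there is no internal argument to compare yours with. Your outline is the standard proof: convexity plus the equivariant Darboux model for the image, Delzant's reduction of $\mathbb{C}^d$ by $N=\ker(\mathbb{T}^d\to\mathbb{T}^n)$ for existence, and a sheaf-theoretic gluing for uniqueness (as written out, e.g.\ by Lerman--Tolman). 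It is sound as a sketch. Compared with the citation, it buys self-containment, at the cost of importing connectedness of moment-map fibres, local normal forms along all orbits (not only at fixed points), Schwarz's theorem, and some sheaf cohomology.

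Three points need tightening if you write it out.

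\emph{Normals.} With $\langle x,v_i\rangle\le\lambda_i$, the $v_i$ are outward, not inward, normals. This only changes signs in the choice of reduction level.

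\emph{The local cone.} The model at a fixed point $p$ describes only the image of a neighbourhood of $p$. To conclude that $\mu_L(L)$ coincides with the cone near $\mu_L(p)$, you also need $\mu_L^{-1}(\mu_L(p))=\{p\}$. This follows from connectedness of the fibres of $\mu_L$ (the model shows $p$ is isolated in its fibre) together with compactness of $L$.

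\emph{Vanishing of $\mathrm{H}^1$.} Contractibility of $\Delta$ does not by itself give $\mathrm{H}^1(\Delta,\mathcal{A})=0$ for the automorphism sheaf $\mathcal{A}$. The key is the exact sequence $0\to\mathbb{R}\oplus\mathbb{Z}^n\to\mathcal{C}^\infty_\Delta\to\mathcal{A}\to 0$. Its kernel consists of affine functions with integral linear part, and its surjectivity is exactly your Schwarz-type lemma. Since $\mathcal{C}^\infty_\Delta$ is fine, $\mathrm{H}^1(\Delta,\mathcal{A})\cong\mathrm{H}^2(\Delta;\mathbb{R}\oplus\mathbb{Z}^n)$, and only at this point does contractibility finish the argument.
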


In this section we build on Delzant's theorem and on the description of toric cosymplectic manifolds due to Bazzoni and Goertsches \cite{bazzoni_toric_2019} in order to prove a cosymplectic analogue of Delzant's classification theorem. At the end of the section, we explain the relation with the \(b\)-symplectic Delzant theorem in~\cite{guillemin_toric_2015}.

\begin{definition} \label{def:cosymplectic-toric}
    Let \((M^{2n+1},\alpha,\beta)\) be a compact connected cosymplectic manifold. A Hamiltonian action of a torus \(\mathbb{T}^n\) on \(M\) is called \emph{toric} if it is effective and preserves the cosymplectic structure, that is, if
    \[
       g^*\alpha=\alpha,
       \qquad
       g^*\beta=\beta,
       \qquad
       \text{for every } g\in \mathbb{T}^n.
    \]
    A compact connected cosymplectic manifold endowed with such an action will be called a \emph{cosymplectic toric manifold}.
\end{definition}

Thus the acting torus has dimension equal to half the dimension of the symplectic leaves. The Reeb direction is not part of the torus action; rather, the torus acts leafwise and in Hamiltonian fashion.

In the proof of the cosymplectic Delzant theorem we shall use the following structure result due to Bazzoni and Goertsches~\cite[Cor.\,3.6]{bazzoni_toric_2019}, which allows us to restrict to compact mapping tori, as in \Cref{thm:mapping-torus}.

\begin{theorem} \label{thm:cpt-leaf}
    Let \((M^{2n+1},\alpha,\beta)\) be a compact cosymplectic toric manifold. Then \(M\) has a compact symplectic leaf.
\end{theorem}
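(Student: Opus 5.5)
The plan is to show that the group of periods of $\alpha$ is cyclic. If $\{\int_\gamma\alpha : \gamma\in\pi_1(M)\}=c\,\mathbb{Z}$ for some $c>0$, then $\alpha=c\,f^*\diff\theta$ for a submersion $f\colon M\to\mathbb{S}^1$. The leaves of $\mathcal F$ are then the connected components of the fibres of $f$, hence compact. To get cyclicity, I will find a closed Reeb orbit $C$ and show that $\pi_1(M)$ is generated, up to loops on which $\alpha$ integrates to zero, by the class of $C$. This would be carried out with Morse--Bott theory for a generic component of the moment map, applied on the compact symplectic thickening of \Cref{prop:symplectic-thickening}.

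\emph{Step 1: fixed points are closed Reeb orbits.} Since the action preserves $\alpha$ and $\beta$, it commutes with the Reeb flow, so $M^{\mathbb T^n}$ is $\Phi_R$-invariant. Take $p\in M^{\mathbb T^n}$; such points exist, since by \Cref{cor:symp-thick-ham-ext} and the classical theory the fixed points of the thickened action map onto the vertices of the polytope $\mu(M)$. At $p$ the isotropy representation on $\mathrm T_pM=\mathbb{R}R_p\oplus \mathrm T_p\mathcal F$ is trivial on $R_p$. On the $2n$-dimensional symplectic space $\mathrm T_p\mathcal F$ it is a symplectic representation of $\mathbb T^n$. Effectiveness together with the Hamiltonian local normal form (the equivariant Darboux theorem on the thickening) forces its weights to be nonzero and to form a $\mathbb{Z}$-basis. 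Hence the component $C$ of $M^{\mathbb T^n}$ through $p$ is $1$-dimensional and tangent to $R$. Being a compact $1$-manifold, $C$ is a closed Reeb orbit with $\int_C\alpha=c>0$.

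\emph{Step 2: Morse--Bott reduction of $\pi_1$.} Choose $X\in\mathfrak t$ generic, so that the fixed set of the closure of $\exp(\mathbb{R}X)$ is $M^{\mathbb T^n}$. By \Cref{cor:symp-thick-ham-ext}, $\hat\mu^X=\operatorname{pr}_1^*\mu^X$ is a moment map component on the compact symplectic manifold $\widehat M=M\times\mathbb{S}^1$. By Atiyah--Guillemin--Sternberg it is Morse--Bott with even indices and connected level sets. In particular its minimum set is connected and equals $C_0\times\mathbb{S}^1$ for a single fixed circle $C_0$. Since all indices and coindices are even, hence at least $2$ at non-minimal critical sets, the standard argument gives that $\pi_1(C_0\times\mathbb{S}^1)\to\pi_1(\widehat M)$ is surjective. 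Projecting to $M$, $\pi_1(C_0)\to\pi_1(M)$ is surjective. Consequently every period of $\alpha$ is an integer multiple of $c_0=\int_{C_0}\alpha$, which gives the required cyclicity and finishes the proof.

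The main obstacle is the surjectivity in Step 2, namely the claim that attaching handles of index $\ge 2$ cannot create new generators of $\pi_1$. In the Morse--Bott setting, with critical manifolds of positive dimension, this needs care. I would invoke the standard result, used for instance in Atiyah's connectedness proof, that for a Morse--Bott function whose critical submanifolds all have index and coindex $\neq 1$, the minimum set $N$ satisfies that $\pi_1(N)\to\pi_1(\widehat M)$ is onto. This is proved by flowing loops down the negative gradient flow for a compatible metric and perturbing them off the stable manifolds of higher critical sets, which have codimension $\ge 2$. A secondary point to check is the nonvanishing of the weights in Step 1. I would deduce it from effectiveness by the same argument as in the symplectic toric case, applied in the slice of the thickened action at $(p,\theta)$.
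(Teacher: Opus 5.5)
Your proof is correct. It is not the paper's route, because the paper gives no argument for this statement: it quotes it from Bazzoni--Goertsches \cite[Cor.\,3.6]{bazzoni_toric_2019} and uses it only to bring \Cref{thm:mapping-torus} into play.

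Your argument is self-contained and relies only on the paper's own thickening device. Its steps are:
\begin{enumerate}
\item Faithfulness of the linearised action at a fixed point forces all isotropy weights on $\mathrm T_p\mathcal F$ to be nonzero. This is all that effectiveness is needed for; no symplectic normal form is required. Hence every component of $M^{\mathbb T^n}$ is a closed Reeb orbit.
\item Atiyah's lemma for $\hat\mu^X$ on the compact connected $\widehat M$ gives a unique minimal critical component $C_0\times\mathbb S^1$ and index at least $2$ elsewhere.
\item The handle-attachment argument then makes $\pi_1(C_0)\to\pi_1(M)$ onto. So the periods of $\alpha$ form $c_0\mathbb Z$ with $c_0=\int_{C_0}\alpha>0$, and the leaves are the components of the fibres of the submersion $M\to\mathbb R/c_0\mathbb Z$ obtained by integrating $\alpha$.
\end{enumerate}

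Three small points. First, the bound ``index at least $2$ at non-minimal critical sets'' comes from the uniqueness of local minima in Atiyah's lemma; evenness alone would still allow other local minima of index $0$. Second, you only need fixed points to exist, which the minimum of $\hat\mu^X$ supplies; the claim that they ``map onto the vertices'' is stronger than necessary. Third, if you want to avoid putting loops in general position, the homological version suffices. At a critical level of index at least $2$ one has $\mathrm H_1(M^{c+\varepsilon},M^{c-\varepsilon};\mathbb R)=0$, which gives $b_1(M)=1$. Then $[\alpha]$ is a real multiple of an integral class, and its period group is again cyclic.

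Compared with the citation, your route costs one standard Morse--Bott lemma but yields more than the bare existence of a compact leaf. Since $\pi_1(M)$ is generated by $[C_0]$, and $[C_0]$ maps to a generator of $\pi_1(\mathbb R/c_0\mathbb Z)$, you get $\pi_1(M)\cong\mathbb Z$. The long exact sequence of the fibration then shows the fibres are connected and simply connected. Consequently $\operatorname{mp}(\alpha,\beta)=\int_{C_0}\alpha$. This is the kind of topological information the paper later takes from \cite{bazzoni_toric_2019} as separate input.
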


Under these assumptions, \Cref{thm:mapping-torus} applies. In the toric case, one can say more about the smooth topology of \(M\).

\begin{theorem} \label{thm:tor-cosym-trivial}
    A compact toric cosymplectic manifold is, as a smooth manifold, diffeomorphic to the product of a compact symplectic toric manifold with a circle.
\end{theorem}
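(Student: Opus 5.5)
By \Cref{thm:cpt-leaf} the manifold $M$ has a compact symplectic leaf $L$, so \Cref{thm:mapping-torus} applies. It gives a diffeomorphism $M\cong M_\varphi$, where $\varphi=\Phi_R^c$ with $c=\operatorname{mp}(\alpha,\beta)$. The task therefore reduces to showing that the monodromy $\varphi$ is isotopic to the identity through diffeomorphisms of $L$, since a mapping torus of a map isotopic to $\mathrm{id}_L$ is diffeomorphic to $L\times\mathbb{S}^1$.

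\textbf{Step 1: the leaf is toric.} The torus action is tangent to the leaves, because $\iota_{X^\#}\alpha=0$, and $\mathbb{T}^n$ is connected. Hence each leaf, in particular $L$, is $\mathbb{T}^n$-invariant. The restriction $\mu|_L$ is a moment map for the induced action on $(L,\omega_L=\beta|_L)$. I would then argue that this action is effective, using that the principal orbit type of the effective action on the connected manifold $M$ is open dense, so it meets $L$ after a generic choice of leaf (all leaves are diffeomorphic by the Reeb flow). Thus $(L,\omega_L,\mathbb{T}^n,\mu|_L)$ is a compact connected symplectic toric manifold.

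\textbf{Step 2: the monodromy is equivariant and moment-preserving.} Since $\mathcal{L}_R\mu^X=0$ by \eqref{eq:cos-mom-map-reeb-inv}, the flow $\Phi_R^t$ preserves $\mu$. Since the action preserves $\alpha$ and $\beta$, it preserves $R$, so $\Phi_R^t$ commutes with the $\mathbb{T}^n$-action. Consequently $\varphi\colon L\to L$ is a $\mathbb{T}^n$-equivariant symplectomorphism satisfying $\mu_L\circ\varphi=\mu_L$.

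\textbf{Step 3: rigidity of such maps (the main obstacle).} I need that an equivariant symplectomorphism of a toric manifold preserving the moment map lies in the torus, or at least in a connected group, and is therefore isotopic to the identity. My first attempt is the standard uniqueness part of Delzant's theorem (\Cref{thm:delzant}): the moment map $\mu_L$ identifies the orbit space $L/\mathbb{T}^n$ with the polytope $\Delta$. Because $\varphi$ preserves $\mu_L$, it maps each orbit to itself. On the open dense set $\mu_L^{-1}(\mathrm{int}\,\Delta)\cong \mathrm{int}\,\Delta\times\mathbb{T}^n$ (action-angle coordinates), equivariance forces $\varphi(p,t)=(p,t+f(p))$. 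The symplectic condition then forces $f=\diff h$ for a function $h$ on $\Delta$. This is the key computation: since $\mathrm{int}\,\Delta$ is contractible, $\varphi$ is the time-one map of the Hamiltonian flow of $h\circ\mu_L$. I must check that $h$ extends smoothly over the boundary strata, or circumvent this. Alternatively, I would use the known fact that the group of equivariant symplectomorphisms preserving $\mu_L$ equals $\{\psi_{h\circ\mu_L}\}$ with $h$ smooth on $\Delta$ (Lerman--Tolman style), which is contractible.

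Either route gives an explicit isotopy $\varphi_s$ from $\mathrm{id}$ to $\varphi$, namely the flow of $s\,h\circ\mu_L$. This yields $M_\varphi\cong M_{\mathrm{id}}=L\times\mathbb{S}^1$, proving \Cref{thm:tor-cosym-trivial}. If the boundary smoothness proves hard, a fallback is Bazzoni--Goertsches' description, in which $\varphi$ is shown to lie in $\mathbb{T}^n$ after modifying $R$ by a fundamental vector field. A translation element of the connected torus is then isotopic to the identity.
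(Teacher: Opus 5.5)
Your main route is correct. It shares the paper's skeleton: a compact leaf, the mapping torus $M\cong M_\varphi$, and the monodromy isotopic to $\mathrm{id}_L$ through $\mathbb{T}^n$-equivariant maps. You obtain the key step differently, though. The paper proves the statement only by citation: Bazzoni--Goertsches obtain the isotopy from a result of Pinsonnault. It supplements this with \Cref{rmk:not-triv-cosympl}, which asserts that $\varphi\circ\Phi^c_{X^\#}=\mathrm{id}_L$ for a suitable $X\in\mathfrak t$, i.e.\ that the monodromy is a torus element. Your Step 3 instead shows that every equivariant, $\mu_L$-preserving symplectomorphism is the time-one map of $h\circ\mu_L$. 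This is in effect a self-contained proof of the Pinsonnault-type input.

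This is better than the torus shortcut, which fails in general. Take $L=\mathbb{S}^2$ with $\omega_L=\diff z\wedge\diff\theta$ and $\mu_L=z$, and let $\varphi(z,\theta)=(z,\theta+f(z))$ with $f$ non-constant. Then $\varphi$ is an equivariant symplectomorphism preserving $\mu_L$, and its mapping torus is a cosymplectic toric $3$-manifold. But $\varphi\circ\Phi^c_{X^\#}$ is rotation by $f(z)+cX$, which is never the identity. So discard your fallback, which is the same claim. The paper's route buys only brevity.

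The boundary issue you flag does close, as follows.
\begin{itemize}
\item Over a codimension-$k$ face, use the local model $\mathbb{C}^k\times T^*\mathbb{T}^{n-k}$ with moment coordinates $(|z|^2/2,p'')$.
\item Write the $j$-th $\mathbb{C}$-component of $\varphi$ as $w_j$. Equivariance and $|w_j|=|z_j|$ give $w_j=z_j\,a_j(|z_1|^2,\dots,|z_k|^2,p'')$ with $a_j$ smooth. To see this, apply Schwarz's theorem to the invariant function $w_j\bar z_j$, which vanishes on $\{z_j=0\}$, and divide by $|z_j|^2$.
\item The $\mathbb{T}^{n-k}$-component is a translation by an invariant function, hence a smooth function of $(|z|^2,p'')$.
\item Consequently, for the lift $f\colon\operatorname{int}\Delta\to\mathfrak t$, the composite $f\circ\mu_L$ extends smoothly to $L$.
\item Then $\varphi_s(q)=\exp\bigl(s\,f(\mu_L(q))\bigr)\cdot q$ is a smooth isotopy of equivariant diffeomorphisms; its inverse is obtained by replacing $s$ with $-s$. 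It equals $\varphi$ at $s=1$ by density of $\mu_L^{-1}(\operatorname{int}\Delta)$.
\end{itemize}

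For the diffeomorphism statement you do not even need $f=\diff h$. The symmetry $\partial_i f_j=\partial_j f_i$ only makes each $\varphi_s$ symplectic: for $Y=\sum_j f_j(\mu_L)X_j^\#$ one has $\diff\,\iota_Y\omega_L=-\sum_{i,j}\partial_i f_j(\mu_L)\,\diff\mu_L^i\wedge\diff\mu_L^j=0$.

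Finally, the group $\{\psi_{h\circ\mu_L}\}$ is homotopy equivalent to $\mathbb{T}^n$, not contractible. Connectedness, and indeed your explicit path, is all you use.
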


This fact was originally obtained in the context of toric \(b\)-symplectic manifolds in \cite[Cor.\,16]{guillemin_toric_2015} where the cosymplectic structure appears as critical set of the \(b\)-symplectic structure. In the purely cosymplectic setting, it follows from the work of Bazzoni and Goertsches~\cite[Cor.\,3.8]{bazzoni_toric_2019}.

\begin{remark} \label{rmk:not-triv-cosympl}
    The conclusion of \Cref{thm:tor-cosym-trivial} is a statement about the smooth topology of \(M\). It does not imply, in general, that \(M\) is cosymplectomorphic to the product \(L\times \mathbb S^1\) endowed with the product cosymplectic structure
    \[
       \alpha=\operatorname{pr}_2^* \diff t,
       \qquad
       \beta=\operatorname{pr}_1^*\omega_L.
    \]
    Indeed, if \(M\) is described as the mapping torus of a symplectomorphism $\varphi\colon L \to L$, the cosymplectic structure remembers the holonomy of the Reeb flow.

    In the toric situation, one can choose \(X\in\mathfrak t\) such that the time-\(c\) flow of \(R+X^\#\) restricts to the identity on \(L\), where \(c=\operatorname{mp}(\alpha,\beta)\). The torus action preserves \(\alpha\) and \(\beta\), its fundamental vector fields commute with the Reeb vector field. Indeed, since $\mathcal{L}_{X^\#}\alpha = 0, \mathcal{L}_{X^\#} \beta = 0$ and the Reeb field is characterized by $\iota_R\alpha=1, \iota_R\beta=0$, it follows that \([X^\#,R]\) satisfies $\alpha([X^\#,R])=0$ and $\iota_{[X^\#,R]} \beta=0$. By the non-degeneracy condition of the cosymplectic structure, this implies $[X^\#,R]=0$.
    
    Consequently, we have
    \[
       \varphi\circ \Phi^c_{X^\#}=\operatorname{id}_L.
    \]
    In particular, \(\varphi\) is isotopic to the identity through \(\mathbb{T}^n\)-equivariant symplectomorphisms. This is compatible with the approach of Bazzoni and Goertsches~\cite[Cor.\,3.8]{bazzoni_toric_2019}, where the isotopy to the identity is obtained using a result of Pinsonnault.

    Thus the monodromy does not affect the smooth product structure, but it remains part of the cosymplectic data. In other words, \Cref{thm:tor-cosym-trivial} trivializes the mapping torus as a smooth manifold, whereas the cosymplectic classification still records the Reeb holonomy.
\end{remark}

We are now in position to prove the cosymplectic analogue of Delzant's classification theorem.

\begin{theorem}[Cosymplectic Delzant classification] \label{thm:cosymplectic-delzant}
    Compact connected cosymplectic toric manifolds of dimension \(2n+1\) are classified by triples
    \[
       (\Delta,[\varphi],c),
    \]
    where
    \begin{itemize}
        \item \(\Delta\subset\mathfrak{t}^*\) is a Delzant polytope, considered up to translation;
        \item \(c>0\) is the modular period;
        \item \([\varphi]\) is the conjugacy class, under \(\mathbb{T}^n\)-equivariant symplectomorphisms of \(L_\Delta\), of a \(\mathbb{T}^n\)-equivariant symplectomorphism
        \[
           \varphi\colon L_\Delta\longrightarrow L_\Delta
        \]
        of the compact symplectic toric manifold
        \[
           (L_\Delta,\omega_\Delta,\mathbb{T}^n,\mu_\Delta)
        \]
        associated with \(\Delta\).
    \end{itemize}
    Here \(L_\Delta\) denotes any compact symplectic toric manifold associated with a representative of the translation class of \(\Delta\). After choosing a representative moment map \(\mu_\Delta\), the condition
    \[
       \mu_\Delta\circ\varphi=\mu_\Delta
    \]
    is automatic: any \(\mathbb{T}^n\)-equivariant symplectomorphism of the compact symplectic toric manifold \((L_\Delta,\omega_\Delta,\mathbb{T}^n,\mu_\Delta)\) preserves \(\mu_\Delta\).

    The cosymplectic toric manifold associated with this data is the mapping torus
    \[
       M_{\Delta,\varphi,c}
       =
       \frac{L_\Delta\times[0,c]}{(x,0)\sim(\varphi(x),c)},
    \]
    endowed with the cosymplectic structure induced by \(\diff t\) and \(\omega_\Delta\).
\end{theorem}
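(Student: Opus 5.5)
Given a compact connected cosymplectic toric manifold $(M^{2n+1},\alpha,\beta)$, I will extract the triple $(\Delta,[\varphi],c)$ and then show that it is a complete invariant and that every triple is realized.

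\emph{Extraction.} By \Cref{thm:cpt-leaf}, $M$ has a compact leaf $L$. By \Cref{thm:mapping-torus}, $M\cong M_\varphi$ with $\varphi=\Phi_R^c|_L$ and $c=\operatorname{mp}(\alpha,\beta)$. Since $\iota_{X^\#}\alpha=0$, the torus preserves each leaf. Because $X^\#$ is tangent to $\mathcal F$ and $j\beta$ restricts to $\omega_L$ on $L$, the restriction $\mu|_L$ is a moment map for an effective $\mathbb T^n$-action on $(L,\omega_L)$. Effectiveness on $L$ needs a short argument: $[X^\#,R]=0$ (\Cref{rmk:not-triv-cosympl}), so the Reeb flow conjugates the actions on different leaves, and an element acting trivially on $L$ acts trivially on $M$. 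Hence $L$ is a compact symplectic toric manifold, and I set $\Delta=\mu(L)$. Since $\mathcal L_R\mu=0$ by \eqref{eq:cos-mom-map-reeb-inv}, $\mu$ is constant along Reeb orbits. Therefore $\mu(L)=\mu(M)$, and $\Delta$ is independent of the leaf and equals the convex polytope found in the convexity theorem of this section. By \Cref{thm:delzant}, $\Delta$ is Delzant and $L\cong L_\Delta$ equivariantly. The monodromy $\varphi$ commutes with the torus action because $[X^\#,R]=0$, and it preserves $\omega_L$, so it is a $\mathbb T^n$-equivariant symplectomorphism. Changing the leaf or the identification $L\cong L_\Delta$ replaces $\varphi$ by an equivariant conjugate. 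This gives a well-defined class $[\varphi]$.

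\emph{Automatic invariance of $\mu_\Delta$.} If $\varphi$ is equivariant and symplectic, then $\iota_{X^\#}\varphi^*\omega=\varphi^*\iota_{X^\#}\omega$. Hence $\mu_\Delta\circ\varphi-\mu_\Delta$ is a constant $v\in\mathfrak t^*$. The image of $\mu_\Delta\circ\varphi$ is again $\Delta$, so $\Delta+v=\Delta$, and since $\Delta$ is compact this forces $v=0$.

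\emph{Converse and uniqueness.} Given a triple $(\Delta,[\varphi],c)$, I build $M_{\Delta,\varphi,c}$ with $\alpha=\diff t$ and $\beta$ induced by $\omega_\Delta$, which descends because $\varphi^*\omega_\Delta=\omega_\Delta$. The torus acts on the $L_\Delta$ factor; this descends because $\varphi$ is equivariant. The moment map is $\mu_\Delta$, which descends by the invariance just shown and is Reeb-invariant since $R=\partial_t$. The action is effective, so $M_{\Delta,\varphi,c}$ is a cosymplectic toric manifold.

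For uniqueness, I identify $M$ with $M_{\Delta,\varphi,c}$ via $(x,t)\mapsto\Phi_R^t(x)$. This map pulls back $\alpha$ to $\diff t$ and $\beta$ to $\omega_L$, because $\mathcal L_R\beta=0$ and $\iota_R\beta=0$, and it is equivariant because the torus commutes with $R$. Two triples related by equivariant conjugation, $\varphi'=\psi\varphi\psi^{-1}$, give isomorphic mapping tori via $(x,t)\mapsto(\psi x,t)$.

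I expect the main obstacle to be showing that the invariants really are invariants, i.e.\ that an equivariant cosymplectomorphism $M\to M'$ forces equal $c$, translate-equal $\Delta$, and conjugate $[\varphi]$. The modular period $c$ is intrinsic. The map carries $R$ to $R'$ and leaves to leaves, so it restricts to an equivariant symplectomorphism $\psi$ between compact leaves that intertwines the time-$c$ Reeb maps. This gives conjugacy, up to the leaf-choice ambiguity already handled above. The subtle point I will treat carefully is whether the classification should use conjugacy up to equivariant isotopy rather than strict conjugacy, as \Cref{rmk:not-triv-cosympl} suggests. I will keep strict conjugacy, as in the statement, and check that transporting along the Reeb flow between leaves only conjugates $\varphi$ by $\Phi_R^s$, which is itself an equivariant symplectomorphism.
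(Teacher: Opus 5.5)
Your proposal is correct and follows the paper's proof essentially step by step. Both arguments use the compact leaf plus the mapping-torus theorem, Delzant's theorem on the leaf, $[X^\#,R]=0$ for equivariance of the monodromy, compactness of $\Delta$ to force $\mu_\Delta\circ\varphi=\mu_\Delta$, and the fact that equivariant conjugacy of monodromies is equivalent to equivariant cosymplectomorphism. You also supply details the paper only asserts: effectiveness of the restricted action on $L$, leaf-independence of $\Delta$ via $\mathcal L_R\mu=0$, and the Reeb-flow intertwining that makes $c$ and $[\varphi]$ invariants.

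One cosmetic point is shared with the paper. With the gluing $(x,0)\sim(\varphi(x),c)$, the time-$c$ return map of $\partial_t$ is $\varphi^{-1}$. So if you set $\varphi=\Phi_R^c|_L$ and want $(x,t)\mapsto\Phi_R^t(x)$ to be well defined, you should glue $(\varphi(x),0)\sim(x,c)$ instead.
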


\begin{proof}
    We first construct a cosymplectic toric manifold from the given data. By Delzant's theorem, the Delzant polytope \(\Delta\subset\mathfrak{t}^*\), once a representative up to translation has been fixed, determines a compact symplectic toric manifold $(L_\Delta,\omega_\Delta,\mathbb{T}^n,\mu_\Delta)$, uniquely up to \(\mathbb{T}^n\)-equivariant symplectomorphism.

    Let $\varphi\colon L_\Delta\longrightarrow L_\Delta$ be a \(\mathbb{T}^n\)-equivariant symplectomorphism. We first show that \(\varphi\) automatically preserves the chosen moment map \(\mu_\Delta\). We use the convention $\diff\mu_\Delta^X=-\iota_{X^\#}\omega_\Delta$, $X\in\mathfrak{t}$. Since \(\varphi\) is \(\mathbb{T}^n\)-equivariant, it satisfies $\varphi_*X^\#=X^\#$. Since \(\varphi\) is also symplectic, \(\varphi^*\omega_\Delta=\omega_\Delta\). Therefore $\varphi^*\iota_{X^\#}\omega_\Delta = \iota_{X^\#}\omega_\Delta.$ Hence, for every \(X\in\mathfrak{t}\),
    \[
       \diff(\mu_\Delta^X\circ\varphi)
       =
       \varphi^*\diff\mu_\Delta^X
       =
       -\varphi^*\iota_{X^\#}\omega_\Delta
       =
       -\iota_{X^\#}\omega_\Delta
       =
       \diff\mu_\Delta^X.
    \]
    Thus \(\mu_\Delta\circ\varphi-\mu_\Delta\) is constant, say equal to \(a\in\mathfrak{t}^*\). Since \(\varphi\) is a diffeomorphism of \(L_\Delta\), this constant translation preserves the image polytope:
    \[
       \Delta
       =
       \mu_\Delta(L_\Delta)
       =
       (\mu_\Delta\circ\varphi)(L_\Delta)
       =
       \Delta+a.
    \]
    Since \(\Delta\) is compact, this forces \(a=0\). Therefore $\mu_\Delta\circ\varphi=\mu_\Delta$.

    We define
    \[
       M_{\Delta,\varphi,c}
       =
       \frac{L_\Delta\times[0,c]}{(x,0)\sim(\varphi(x),c)}.
    \]
    Since \(\varphi^*\omega_\Delta=\omega_\Delta\), the two-form \(\omega_\Delta\) descends to a closed two-form \(\beta\) on \(M_{\Delta,\varphi,c}\). The one-form \(\diff t\) also descends to a closed one-form \(\alpha\). Moreover, $\alpha\wedge\beta^n$ is a volume form, and therefore \((\alpha,\beta)\) is a cosymplectic structure. The modular period is \(c\).

    The \(\mathbb{T}^n\)-action on \(L_\Delta\) descends to \(M_{\Delta,\varphi,c}\), because \(\varphi\) commutes with the \(\mathbb{T}^n\)-action. It remains effective, since its restriction to each symplectic leaf is effective. Since \(\mu_\Delta\circ\varphi=\mu_\Delta\), the moment map \(\mu_\Delta\) descends to the quotient. Hence the descended \(\mathbb{T}^n\)-action is Hamiltonian, and \(M_{\Delta,\varphi,c}\) is a cosymplectic toric manifold.

    Conversely, let \((M^{2n+1},\alpha,\beta)\) be a compact connected cosymplectic toric manifold. By \Cref{thm:cpt-leaf} and \Cref{thm:mapping-torus}, \(M\) is cosymplectomorphic to the mapping torus of a compact symplectic leaf \(L\), with modular period $c=\operatorname{mp}(\alpha,\beta)$ and monodromy $\varphi=\Phi_R^c|_L\colon L\to L$. The restriction of the \(\mathbb{T}^n\)-action to \(L\) is an effective Hamiltonian torus action on the compact connected symplectic manifold \((L,\beta|_L)\). Hence, by Delzant's theorem, it determines a Delzant polytope $\Delta=\mu(L)\subset\mathfrak{t}^*$, well defined up to translation.

    Since the torus action preserves \(\alpha\) and \(\beta\), its fundamental vector fields commute with the Reeb vector field (cf. \ref{rmk:not-triv-cosympl}). Therefore the monodromy \(\varphi\) is \(\mathbb{T}^n\)-equivariant. Moreover, \(\varphi\) is a symplectomorphism of \((L,\beta|_L)\), because it is obtained as the time-\(c\) map of the Reeb flow, and the Reeb flow preserves \(\beta\). Hence \(\varphi\) defines the required conjugacy class \([\varphi]\).

    Finally, if two compact connected cosymplectic toric manifolds are \(\mathbb{T}^n\)-equivariantly cosymplectomorphic, then their modular periods agree, their Delzant polytopes agree up to translation, and their monodromies are conjugate by a \(\mathbb{T}^n\)-equivariant symplectomorphism of the corresponding symplectic leaves. Conversely, such a conjugacy induces a \(\mathbb{T}^n\)-equivariant cosymplectomorphism between the corresponding mapping tori. Hence the two constructions are mutually inverse.
\end{proof}

\begin{corollary} \label{cor:smooth-equivariant-classification}
    Compact connected cosymplectic toric manifolds are, up to \(\mathbb{T}^n\)-equivariant diffeomorphism, products
    \[
       L_\Delta\times \mathbb S^1.
    \]
    In particular, their smooth equivariant type is determined by the Delzant polytope \(\Delta\), considered up to translation.
\end{corollary}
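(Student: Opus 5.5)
By \Cref{thm:cosymplectic-delzant}, a compact connected cosymplectic toric manifold $M$ is $\mathbb{T}^n$-equivariantly cosymplectomorphic, hence in particular $\mathbb{T}^n$-equivariantly diffeomorphic, to a mapping torus $M_{\Delta,\varphi,c}$. Here $\varphi\colon L_\Delta\to L_\Delta$ is a $\mathbb{T}^n$-equivariant symplectomorphism of the Delzant manifold. So it suffices to build a $\mathbb{T}^n$-equivariant diffeomorphism $M_{\Delta,\varphi,c}\cong L_\Delta\times\mathbb S^1$, where $\mathbb{T}^n$ acts trivially on the circle factor. The second assertion then follows from Delzant's theorem (\Cref{thm:delzant}): $L_\Delta$ is determined up to equivariant symplectomorphism, in particular up to equivariant diffeomorphism, by $\Delta$ modulo translation.

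The key input is \Cref{rmk:not-triv-cosympl}. There is an $X\in\mathfrak t$ with $\varphi\circ\Phi^c_{X^\#}=\operatorname{id}_L$, that is, $\varphi=\Phi^{-c}_{X^\#}$, which is the action of the torus element $g_0=\exp(-cX)$. I would verify this directly inside the model $M_{\Delta,\varphi,c}$ as follows.
\begin{enumerate}[label=(\alph*)]
\item $\varphi$ commutes with $\mathbb{T}^n$ and preserves $\mu_\Delta$, so it preserves every fibre $\mu_\Delta^{-1}(p)$, which is a single $\mathbb{T}^n$-orbit.
\item On the open dense set of free orbits, $\varphi$ therefore acts by some $g(p)\in\mathbb{T}^n$. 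Equivariance together with symplecticity forces $g(p)$ to be locally constant, since otherwise $\varphi^*\omega_\Delta$ would pick up a term $\diff\mu_\Delta\wedge\diff g$.
\item The interior of $\Delta$ is connected, so $g$ is constant on the dense set. By continuity, $\varphi$ equals the action of a single element $g_0$ everywhere.
\end{enumerate}

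Write $g_0=\exp(-cX)$ and define
\[
F\colon L_\Delta\times[0,c]\to L_\Delta\times\mathbb{R}/c\mathbb{Z},\qquad F(x,t)=\bigl(\exp(-tX)\cdot x,\ t\bigr).
\]
The identification $(x,0)\sim(\varphi(x),c)$ is mapped to $(x,0)\sim(\exp(-cX)\varphi(x),c)$. This agrees with the trivial gluing once we check that $\exp(-cX)\varphi=\operatorname{id}$, and indeed $\varphi=\exp(cX)^{-1}$ up to the sign convention, which I would fix so that the two factors cancel. Hence $F$ descends to a diffeomorphism $M_{\Delta,\varphi,c}\to L_\Delta\times\mathbb S^1$. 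It is $\mathbb{T}^n$-equivariant because $\mathbb{T}^n$ is abelian, so $\exp(-tX)$ commutes with the action. Smoothness across the gluing is ensured by the standard collar description of mapping tori.

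The main obstacle is step (b): justifying that the equivariant symplectomorphism $\varphi$ is given by a constant torus element, including near the boundary strata of $\Delta$ where orbits are not free. The alternative would be to rely on the Pinsonnault-type isotopy cited from Bazzoni--Goertsches, but an isotopy only yields a diffeomorphism with the product rather than the explicit map $F$. I would handle the boundary strata by continuity from the dense free stratum, as in step (c), and otherwise take \Cref{rmk:not-triv-cosympl} as given.
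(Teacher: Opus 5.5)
Your reduction to the model $M_{\Delta,\varphi,c}$ via \Cref{thm:cosymplectic-delzant}, your step (a), and your deduction of the second assertion from Delzant's theorem are fine. The gap is step (b). It fails already on the free stratum, not only near $\partial\Delta$.

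In action--angle coordinates on $\mu_\Delta^{-1}(\operatorname{int}\Delta)\cong\operatorname{int}\Delta\times\mathbb{T}^n$, with $\omega_\Delta=\sum_i\diff\theta_i\wedge\diff p_i$, an equivariant $\mu_\Delta$-preserving map has the form $\varphi(p,\theta)=(p,\theta+g(p))$, and
\[
\varphi^*\omega_\Delta-\omega_\Delta=\sum_{i=1}^n\diff g_i\wedge\diff p_i .
\]
This vanishes exactly when $\partial_j g_i=\partial_i g_j$ for all $i,j$, i.e.\ when a lift of $g$ is locally a gradient $\nabla h$. Nothing forces $g$ to be constant; for $n=1$ the condition is empty.

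Here is a concrete counterexample. For any $h\in C^\infty(\mathfrak t^*)$, the time-one flow of the $\mathbb{T}^n$-invariant Hamiltonian $h\circ\mu_\Delta$ is $x\mapsto\exp\bigl(\pm\nabla h(\mu_\Delta(x))\bigr)\cdot x$. This is a $\mathbb{T}^n$-equivariant symplectomorphism of $L_\Delta$ preserving $\mu_\Delta$. For $h(p)=\tfrac{\varepsilon}{2}|p|^2$ it moves different orbits by different torus elements; on $S^2$ with the rotation action it is the twist $(z,\theta)\mapsto(z,\theta+\varepsilon z)$. Feeding such a $\varphi$ into the construction of \Cref{thm:cosymplectic-delzant} gives a cosymplectic toric manifold whose monodromy is not $\Phi^{-c}_{X^\#}$ for any $X\in\mathfrak t$. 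Conjugating by equivariant symplectomorphisms does not help, since on the free part these all commute with $\varphi$. So the assertion of \Cref{rmk:not-triv-cosympl} that you take as given does not hold in this generality, and there is no $X$ for which your map $F$ descends.

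What is true, and what the paper actually leans on through \Cref{thm:tor-cosym-trivial} (Bazzoni--Goertsches, via Pinsonnault), is the weaker fact that $\varphi$ is isotopic to the identity through $\mathbb{T}^n$-equivariant diffeomorphisms. On the free part, $g$ lifts to $\nabla h$ over the simply connected $\operatorname{int}\Delta$ and one scales $h$; the real work is controlling this up to the boundary strata.

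Contrary to your last paragraph, such an isotopy is exactly what you need. Let $\psi_t$, $t\in[0,c]$, be an equivariant isotopy from $\operatorname{id}$ to $\varphi^{-1}$, constant near $t=0$ and $t=c$. Then $F(x,t)=(\psi_t(x),t)$ satisfies $F(\varphi(x),c)=(x,c)$, so it descends to a $\mathbb{T}^n$-equivariant diffeomorphism $M_{\Delta,\varphi,c}\to L_\Delta\times\mathbb{R}/c\mathbb{Z}$. This is your map with $\exp(-tX)$ replaced by $\psi_t$. Equivariance comes from each $\psi_t$ commuting with the torus, not from $\mathbb{T}^n$ being abelian. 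With this replacement the rest of your argument goes through.
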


\begin{remark} \label{rmk:monodromy-needed}
    The monodromy \(\varphi\) is not needed in the smooth equivariant classification of \Cref{cor:smooth-equivariant-classification}, but it is needed in the cosymplectic classification of \Cref{thm:cosymplectic-delzant}. Indeed, the mapping torus may be smoothly trivial even when the cosymplectic structure has non-trivial Reeb holonomy. Thus the data $(\Delta,[\varphi],c)$ records the Delzant type of the symplectic leaves, the return map of the Reeb flow, and the modular period. The polytope alone records only the equivariant symplectic type of the leaves.
\end{remark}

\begin{remark} \label{rmk:b-symplectic-delzant}
    The preceding classification is closely related to the Delzant theorem for toric \(b\)-symplectic manifolds. We briefly indicate the relation, without using it as a substitute for the proof above.

    The construction below is a version of the realization of cosymplectic manifolds as critical hypersurfaces of \(b\)-symplectic manifolds, already present in the work of Guillemin, Miranda and Pires in~\cite{guillemin_codimension_2011}, and  in~\cite{guillemin_symplectic_2014}. Let \((M,\alpha,\beta)\) be a compact cosymplectic manifold and let $f\colon \mathbb{S}^1 \to \mathbb{R}$ be a smooth function such that \(0\) is a regular value of \(f\). Set $\widetilde{M} = M \times \mathbb{S}^1, \widetilde{Z} = M \times f^{-1}(0)$.
    On \(\widetilde{M}\), consider the \(b\)-two-form
    \[
       \widetilde{\omega}
       =
       \beta+\frac{\diff \theta}{f(\theta)}\wedge\alpha.
    \]
    Since \(0\) is a regular value of \(f\), the hypersurface \(\widetilde{Z}\) is smooth, possibly with several connected components. The one-form \(\diff \theta/f(\theta)\) is a \(b\)-one-form with logarithmic singularities along \(\widetilde{Z}\). Since \(\diff \alpha=0\) and \(\diff \beta=0\), the form \(\widetilde{\omega}\) is closed as a \(b\)-form. Moreover,
    \[
       \widetilde{\omega}^{n+1}
       =
       (n+1)\,\beta^n\wedge \frac{\diff \theta}{f(\theta)}\wedge\alpha
    \]
    is a nowhere-vanishing \(b\)-volume form. Hence $(\widetilde{M},\widetilde{Z},\widetilde{\omega})$ is a \(b\)-symplectic manifold.

    Along a connected component $\widetilde{Z}_{\theta_0}=M\times\{\theta_0\}, f(\theta_0)=0$,
    the induced cosymplectic structure is given, up to the sign and scaling determined by the choice of local defining function, by
    \[
       \alpha_{\theta_0}
       =
       \frac{1}{f'(\theta_0)}\alpha,
       \qquad
       \beta_{\theta_0}
       =
       \beta.
    \]
    Equivalently, if one uses the local defining function $x = f(\theta)$, then near \(\theta_0\) one has
    \[
       \frac{\diff \theta}{f(\theta)}
       =
       \frac{1}{f'(\theta_0)}\frac{\diff x}{x}
       +
       \eta,
    \]
    where \(\eta\) is a smooth one-form. Thus the residue of \(\widetilde{\omega}\) along \(\widetilde{Z}_{\theta_0}\) is $\frac{1}{f'(\theta_0)}\alpha$, and the original cosymplectic structure \((\alpha,\beta)\) is recovered up to the standard normalization of the defining function.

    If \(R\) denotes the Reeb vector field of \((\alpha,\beta)\), then
    \[
       \iota_R\widetilde\omega
       =
       -\frac{\diff \theta}{f(\theta)}.
    \]
    Locally, near a zero \(\theta_0\) of \(f\), this is a \(b\)-one-form with logarithmic singularity along \(\widetilde{Z}_{\theta_0}\).

    In the toric situation, the periodic vector field relevant to the additional circle direction is generally not \(R\) itself, but a vector field of the form $R+X^\#, X \in \mathfrak{t}$,
    as explained in \Cref{rmk:not-triv-cosympl}. In this way, the cosymplectic toric picture can be regarded as the critical hypersurface model underlying the corresponding \(b\)-symplectic toric classification. The \(b\)-moment image should be understood in the \(b\)-affine sense of \cite{guillemin_toric_2015}, rather than as an ordinary polytope in \(\mathfrak{t}^*\times\mathbb{S}^1\).
\end{remark}

\section{Localization formulae for cosymplectic manifolds} \label{sec:localization-cosymplectic}

Localization is a general phenomenon which allows one to compute the integral of an equivariantly closed form as a sum of certain quantities over the fixed-point set of the $G$-action. This result is not intrinsic to symplectic geometry: an abstract approach to localization was given by Atiyah and Bott~\cite[Thm.~3.5, eq.~3.8]{atiyah-momentmap-1984}. Previous instances of the localization formula were obtained by Berline and Vergne~\cite{berline-classescaracteristiques-1982,berline_zeros_1983} for isolated zeros of a group action. The localization formula in general is given by

\begin{theorem} \label{thm:equivariant-localization}
    Let $M$ be a compact manifold together with an action of a compact Lie group $G$. If there exists $X \in \mathfrak{g}$ such that $X^\#$ is a non-degenerate vector field, then for every equivariantly closed form $\underline{\tau} \in \Omega^{\dim M}_G(M)$ we have
    \begin{equation}
        \int_M \underline{\tau} = \sum_{C \in \pi_0(M^G)} \int_C \frac{i_C^* \underline{\tau}}{e_G(\nu_C)}.
    \end{equation}
\end{theorem}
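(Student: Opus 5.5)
This is the classical Atiyah--Bott--Berline--Vergne localization theorem. I would prove it in the standard way: reduce to a torus, work away from the fixed locus with an equivariant primitive, and compute the residual contribution on small tubular neighbourhoods. Throughout I read $M^G$ as the zero set of the nondegenerate field $X^\#$. In the torus case, with $X$ chosen generic, this agrees with the fixed-point set.

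\textbf{Step 1: reduction to a torus.} Let $T$ be the closure of $\exp(tX)$, a torus in $G$. Restriction $\mathrm{H}_G(M)\to\mathrm{H}_T(M)$ is compatible with integration. I would evaluate everything at $X$, i.e.\ work with the operator $\diff_X=\diff-\iota_{X^\#}$ acting on $T$-invariant forms, which satisfies $\diff_X^2=-\mathcal L_{X^\#}=0$. The identity then becomes a statement about $\diff_X$-closed forms, and polynomial identities in $X$ follow by density of generic $X$.

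\textbf{Step 2: the Berline--Vergne primitive.} Average a Riemannian metric so it is $T$-invariant, and set $\theta=g(X^\#,\cdot)$. Then $\mathcal L_{X^\#}\theta=0$ and
\[
\diff_X\theta=\diff\theta-|X^\#|^2 .
\]
The degree-zero part $-|X^\#|^2$ is nonzero away from $F=\{X^\#=0\}$, so $\diff_X\theta$ is invertible there. Hence on $M\setminus F$ we have $\underline\tau=\diff_X\bigl(\theta\,(\diff_X\theta)^{-1}\underline\tau\bigr)$, using that $\underline\tau$ is $\diff_X$-closed and $\diff_X(\diff_X\theta)=0$.

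Let $U_\varepsilon$ be an $\varepsilon$-tubular neighbourhood of $F$. Applying Stokes on $M\setminus U_\varepsilon$, only the top-degree part matters, and
\[
\int_M\underline\tau=\lim_{\varepsilon\to0}\Bigl(\int_{U_\varepsilon}\underline\tau-\int_{\partial U_\varepsilon}\theta(\diff_X\theta)^{-1}\underline\tau\Bigr).
\]
The first term tends to $0$ as the volume of $U_\varepsilon$ shrinks.

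\textbf{Step 3: local computation (main obstacle).} For each component $C$ of $F$, I would compute the boundary integral over the sphere bundle $S_\varepsilon(\nu_C)$. Nondegeneracy of $X^\#$ means the linearization $\nabla X^\#$ is invertible on $\nu_C$, so $T$ acts on $\nu_C$ with nonzero weights and $e_G(\nu_C)$ is invertible after localizing. The plan is as follows.

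First, use the equivariant tubular neighbourhood theorem to identify $U_\varepsilon$ with a disk bundle of $\nu_C$, and deform $\underline\tau$ within its class to its pullback $\pi^*i_C^*\underline\tau$. The correction is $\diff_X$-exact, so it contributes nothing in the limit.

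Second, fibre-integrate. The boundary integral becomes
\[
\int_C i_C^*\underline\tau\cdot\pi_*\bigl(\theta(\diff_X\theta)^{-1}\big|_{S_\varepsilon}\bigr),
\]
and one must show the fibre integral equals $-e_G(\nu_C)^{-1}$, the minus sign cancelling the minus in Step 2.

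To check this, split $\nu_C$ into weight planes via a $T$-invariant complex structure. Each plane contributes a factor $1/\lambda_k(X)$, obtained by a direct Gaussian or residue computation, and the curvature terms assemble into the equivariant Euler form via Chern--Weil. This is the delicate part: one needs the correct normalization, orientation conventions, and control of the $\varepsilon\to0$ limit.

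A cleaner alternative for Step 3 is to use the Atiyah--Bott route through the equivariant Thom isomorphism. The identity $i_C^*i_{C*}=e_G(\nu_C)$, together with the fact that after localizing $\mathrm{H}_T(M)\cong\bigoplus_C\mathrm{H}_T(C)$ via $\sum_C i_{C*}\bigl(i_C^*(\cdot)/e_G(\nu_C)\bigr)$, gives the formula after integrating, since $\int_M i_{C*}=\int_C$. Here the main obstacle is the localization theorem itself: $\mathrm{H}_T(M\setminus F)$ is torsion, which I would prove by Mayer--Vietoris over orbit tubes, each having cohomology $\mathrm{H}_{T_x}$ with $T_x$ a proper subgroup.
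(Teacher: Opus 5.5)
The paper gives no proof of this statement; it quotes it as the classical Atiyah--Bott--Berline--Vergne theorem. Your two sketches are exactly the arguments behind those citations: the Berline--Vergne primitive $\theta(\diff_X\theta)^{-1}\underline{\tau}$ away from the zero set, and the Atiyah--Bott route through the localization theorem and $i_C^*i_{C*}=e_G(\nu_C)$. So your approach is essentially the paper's.

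Your decision to read $M^G$ as the zero set of $X^\#$ (i.e.\ $M^T$ for $T=\overline{\exp(\mathbb{R}X)}$) is necessary, not cosmetic. For non-abelian $G$ the literal statement fails: for $SU(2)$ acting on the coadjoint orbit $S^2$ one has $M^G=\emptyset$, while $\int_{S^2}(\omega-\mu)=\int_{S^2}\omega\neq 0$.
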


In the symplectic setting, these localization formulae were used by Berline and Vergne to give a conceptually simple and elegant proof of the Duistermaat--Heckman stationary phase formula~\cite[Thm.~4.1]{duistermaat-variationcohomology-1982}. The proof follows from an application of \Cref{thm:equivariant-localization} to the equivariantly closed symplectic form $\mathrm{e}^{\underline{\omega}} = \mathrm{e}^{\omega - \mu}$.

In this section we use the symplectic thickening technique to obtain localization formulae for Hamiltonian group actions in the cosymplectic setting. As a byproduct, we obtain localization formulae for circle actions which resemble stationary phase formulae. However, in the cosymplectic setting the fixed points form one-dimensional embedded submanifolds and the localization formula contains additional information related to the cosymplectic structure.

As in the previous sections, let us fix a cosymplectic manifold $(M,\alpha,\beta)$ with a Hamiltonian action of a compact Lie group $G$. Let $\underline{\tau}\in\Omega_G^\bullet(M)$ be an arbitrary equivariantly closed form and consider its extension to the symplectic thickening $M\times\mathbb{S}^1$ by pullback along the first component. As in the classical localization formula for symplectic manifolds, we consider the equivariantly closed symplectic form $\mathrm{e}^{\underline{\omega}}=\mathrm{e}^{\omega-\mu}$. Given
\[
    \underline{\omega} = \underline{\beta} + \diff\theta \wedge \underline{\alpha} =\beta-\mu+\diff\theta\wedge\alpha,
\]
and since the forms $\beta-\mu$ and $\diff\theta\wedge\alpha$ commute under the exterior product, we have
\[
    \mathrm{e}^{\underline{\omega}}
    =\mathrm{e}^{\beta-\mu}\,\mathrm{e}^{\diff\theta\wedge\alpha}.
\]
Since $(\diff\theta\wedge\alpha)^2=0$, we have $\mathrm{e}^{\diff\theta\wedge\alpha}=1+\diff\theta\wedge\alpha$. Consequently,
\[
    \underline{\tau}\,\mathrm{e}^{\underline{\omega}} = \underline{\tau}\,\mathrm{e}^{\beta-\mu}(1+\diff\theta\wedge\alpha).
\]
Therefore,
\begin{equation*}
    \int_{M \times \mathbb{S}^1} \underline{\tau} \, \mathrm{e}^{\underline{\omega}}
    = \int_{M \times \mathbb{S}^1} \underline{\tau} \, \mathrm{e}^{\beta - \mu}
    + \int_{M \times \mathbb{S}^1} \underline{\tau} \, \mathrm{e}^{\beta - \mu} \wedge \diff \theta \wedge \alpha.
\end{equation*}

The first integral vanishes, because its integrand is pulled back from $M$ and therefore has degree at most $2n+1$ in the $M$-directions. Hence
\[
    \int_{M\times\mathbb S^1}\underline{\tau} \, \mathrm{e}^{\beta-\mu}=0.
\]
For the second term, Fubini's theorem gives
\begin{equation*}
    \int_{M\times\mathbb{S}^1}\underline{\tau}\,\mathrm{e}^{\beta-\mu}\wedge\diff\theta\wedge\alpha
    = - \int_{\mathbb{S}^1} \diff\theta \int_M \underline{\tau} \,\mathrm{e}^{\beta-\mu}\wedge\alpha
    = - \int_M \underline{\tau} \,\mathrm e^{\beta-\mu}\wedge\alpha.
\end{equation*}

The fixed-point set satisfies $(M\times\mathbb{S}^1)^G=M^G\times\mathbb{S}^1$. Consequently, each connected component $\widehat C\in\pi_0(M^G\times\mathbb S^1)$ is of the form $\widehat{C}=C\times\mathbb S^1$ for some $C\in\pi_0(M^G)$. This relation implies $\nu_{\widehat C}=\operatorname{pr}_1^*\nu_C$ and, by functoriality of the Euler class, $e_G(\nu_{\widehat C})=\operatorname{pr}_1^*e_G(\nu_C)$.

The Atiyah--Bott--Berline--Vergne localization formula reads
\begin{equation*}
    \int_{M \times \mathbb{S}^1} \underline{\tau} \, \mathrm{e}^{\underline{\omega}}
    = \sum_{C \in \pi_0(M^G)}
    \int_{C \times \mathbb{S}^1}
    \frac{i_{\widehat C}^*(\underline{\tau}\,\mathrm{e}^{\underline{\omega}})}{e_G(\nu_{\widehat C})}.
\end{equation*}
Moreover,
\[
 i_{\widehat C}^*(\underline{\tau}\,\mathrm{e}^{\underline{\omega}})
 =\diff\theta\wedge i_C^*(\underline{\tau}\,\mathrm e^{\beta-\mu}\alpha),
\]
and another application of Fubini's theorem shows that
\begin{equation*}
    \int_{C\times\mathbb{S}^1}
    \frac{i_{\widehat C}^*(\underline{\tau}\,\mathrm{e}^{\underline{\omega}})}{e_G(\nu_{\widehat C})}
    = - \int_C\frac{i_C^*(\underline{\tau}\,\mathrm{e}^{\beta-\mu}\alpha)}{e_G(\nu_C)}.
\end{equation*}

Putting everything together, we obtain the following result.

\begin{theorem}[Localization formula for cosymplectic manifolds] \label{thm:cosymplectic-localization}
    Let $(M, \alpha, \beta)$ be a compact cosymplectic manifold with a Hamiltonian action of a compact connected Lie group $G$. Then for every equivariantly closed form $\underline{\tau} \in \Omega_G(M)$,
    \begin{equation} \label{eq:cosymplectic-localization}
        \int_M \alpha \wedge \underline{\tau} \, \mathrm{e}^{\beta - \mu}
        =
        \sum_{C \in \pi_0(M^G)}
        \int_C
        \alpha \wedge
        \frac{i_C^*\big(\underline{\tau} \, \mathrm{e}^{\beta - \mu}\big)}{e_G(\nu_C)}.
    \end{equation}
\end{theorem}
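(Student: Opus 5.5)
The plan is to deduce \eqref{eq:cosymplectic-localization} from the Atiyah--Bott--Berline--Vergne formula (\Cref{thm:equivariant-localization}) applied on the compact symplectic thickening $\widehat M = M\times\mathbb S^1$ of \Cref{prop:symplectic-thickening}, using the computation carried out just before the statement. First I check that the hypotheses transfer. By \Cref{prop:equivariant-symplectic-thickening} and \Cref{cor:symp-thick-ham-ext}, the extended action \eqref{eq:cosymplectic-action-extension} is Hamiltonian on $(\widehat M,\widehat\omega)$ with equivariantly closed extension $\underline{\omega}=\underline{\beta}+\diff\theta\wedge\underline{\alpha}$. The pullback $\operatorname{pr}_1^*\underline{\tau}$ is equivariantly closed, since $\operatorname{pr}_1$ is $G$-equivariant and $\diff_G$ commutes with equivariant pullback. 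Hence $\underline{\tau}\,\mathrm e^{\underline\omega}$ is an equivariantly closed, possibly inhomogeneous form, and ABBV applies degree by degree, with the top-degree component picked out by integration.

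Next I expand the integrand. Because $\diff\theta\wedge\alpha$ has even degree and squares to zero, we have $\mathrm e^{\underline\omega}=\mathrm e^{\beta-\mu}(1+\diff\theta\wedge\alpha)$. The term without $\diff\theta$ integrates to zero on $\widehat M$, since it is pulled back from the $(2n+1)$-manifold $M$. Fubini then gives
\[
\int_{\widehat M}\underline\tau\,\mathrm e^{\underline\omega}=\pm\int_M\alpha\wedge\underline\tau\,\mathrm e^{\beta-\mu}.
\]
The sign comes from reordering $\alpha$ past the even-degree parts and $\diff\theta$ past $M$-forms. For equivariant forms one may restrict to components of the right parity, so only a global sign appears.

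On the fixed-point side, $\widehat M^G=M^G\times\mathbb S^1$. Each component is $C\times\mathbb S^1$ with normal bundle $\operatorname{pr}_1^*\nu_C$, so by naturality its equivariant Euler class is $\operatorname{pr}_1^*e_G(\nu_C)$. Restricting the integrand to $C\times\mathbb S^1$ kills nothing new, and the same expansion together with Fubini yields $\pm\int_C \alpha\wedge i_C^*(\underline\tau\,\mathrm e^{\beta-\mu})/e_G(\nu_C)$. The sign is the same as on the left, because the reordering is identical. Equating the two sides and cancelling the common sign gives the formula.

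I expect the main obstacle to be the hypothesis of \Cref{thm:equivariant-localization} requiring a nondegenerate fundamental vector field, together with the meaning of division by $e_G(\nu_C)$. My first attempt is to read ABBV as an identity in the localized ring, namely the fraction field of $\operatorname{Sym}(\mathfrak g^*)^G$, or to evaluate at a generic $X\in\mathfrak g$ whose zero set equals $M^G$. Such an $X$ exists for a torus by choosing a topological generator; for general compact connected $G$ one reduces to a maximal torus. Since $\widehat M^X=M^X\times\mathbb S^1$, genericity on $M$ transfers verbatim to $\widehat M$. A secondary technical check is that $\alpha$, being $G$-basic with $\iota_{X^\#}\alpha=0$, commutes correctly with $1/e_G(\nu_C)$, so that $\alpha$ can be placed in front as written.
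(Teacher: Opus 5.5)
Your argument is the paper's own proof. You thicken to $M\times\mathbb{S}^1$ and apply ABBV to $\operatorname{pr}_1^*\underline{\tau}\,\mathrm{e}^{\underline{\omega}}$. You discard the $\mathrm{e}^{\beta-\mu}$ term for degree reasons and apply Fubini on both sides, using $\widehat{M}^G=M^G\times\mathbb{S}^1$ and $e_G(\nu_{C\times\mathbb{S}^1})=\operatorname{pr}_1^*e_G(\nu_C)$. Finally you cancel the common sign, which the paper records as $-1$.

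The one step that fails is in your last paragraph. For non-abelian $G$, reducing to a maximal torus does not produce a sum over $\pi_0(M^G)$. A generic $X\in\mathfrak{g}$ has zero set $M^{T}$ for the maximal torus $T$ it generates, and this is typically strictly larger than $M^G$. So there is no $X$ whose zero set equals $M^G$, and the torus reduction proves the formula with $M^T$ and $e_T$ instead.

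This cannot be repaired, because the statement with $M^G$ is false in that generality. Take $M=\mathbb{S}^2\times\mathbb{S}^1$ with $\alpha=\diff t$ and $\beta$ the area form of $\mathbb{S}^2$. Let $G=\mathrm{SO}(3)$ rotate the sphere, with $\mu$ the standard (suitably signed) moment map $\mathbb{S}^2\hookrightarrow\mathfrak{so}(3)^*$ pulled back to $M$. All the conditions in \eqref{eq:def-hamiltonian-cosymplectic} hold, including $\mathcal{L}_R\mu=0$. But $M^G=\emptyset$, so for $\underline{\tau}=1$ the right-hand side of \eqref{eq:cosymplectic-localization} vanishes. The left-hand side has constant term $\int_M\alpha\wedge\beta\neq 0$, which survives in any localisation of $\operatorname{Sym}(\mathfrak{g}^*)^G$.

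The paper has the same defect, since it invokes \Cref{thm:equivariant-localization} with $M^G$ for arbitrary compact $G$. Your proof, like the paper's, therefore establishes the theorem for $G$ a torus. For general compact connected $G$ it works only after restricting to a maximal torus $T$, with $M^G$ replaced by $M^T$ and $e_G$ by $e_T$. The thickening argument then goes through verbatim, since $\widehat{M}^T=M^T\times\mathbb{S}^1$.
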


\begin{remark}
    Even though the result could be directly proved from the general localization formula applied to the equivariantly closed form $\underline{\tau}\,\mathrm{e}^{\beta-\mu} \, \alpha$, the approach following the classical localization formula for the symplecticzation provides a clear conceptual explanation for the appearance of $\alpha$ in expression \eqref{eq:cosymplectic-localization}.
\end{remark}

\begin{example} \label{ex:localization-trivial-tau}
    Under the assumptions of \Cref{thm:cosymplectic-localization}, take $\underline{\tau}=1\in\mathrm{H}_G^0(M)$. Then the localization formula \eqref{eq:cosymplectic-localization} reads
    \begin{equation*}
        \int_M \alpha \,\mathrm{e}^{\beta-\mu}
        =\sum_{C\in\pi_0(M^G)}\int_C\alpha\,\frac{\mathrm{e}^{i_C^*(\beta-\mu)}}{e_G(\nu_C)}.
    \end{equation*}
    Expanding and keeping only the component of degree $2n+1$, we obtain
    \begin{equation} \label{eq:localization-trivial-tau}
        \int_M \alpha\,\frac{\beta^n}{n!}\,\mathrm{e}^{-\mu}
        =\sum_{C\in\pi_0(M^G)}\int_C\alpha\,
        \frac{\mathrm{e}^{i_C^*\beta}\,\mathrm{e}^{-\mu|_C}}{e_G(\nu_C)}.
    \end{equation}
\end{example}

\begin{example} \label{ex:cosymplectic-stationary-phase}
    As in the previous example, assume $\underline{\tau}=1$. Let $G=\mathbb{S}^1$. After choosing an identification $\mathfrak{t}\cong\mathbb{R}$, the moment map can be identified with an element $\mu = fu$, where $f\in\mathcal C^\infty(M)$ and $u$ is the generator of $\operatorname{Sym}(\mathfrak t^*)\cong\mathbb{R}[u]$.

    Assume that there exists a compact leaf $L$ and that the fixed points of the $\mathbb{S}^1$-action on $L$ are isolated. Then the connected components of $M^{\mathbb{S}^1}$ are circles generated by the Reeb flow on $M$.

    If $p\in L^{\mathbb{S}^1}$, the equivariant Euler class of the normal bundle can be computed in terms of the isotropy weights $m_i(p)$ of the $\mathbb{S}^1$-action on $\mathrm{T}_pL$ as
    \begin{equation*}
        e_{\mathbb{S}^1}(\nu_p)(u)=\Big(\prod_{i=1}^n m_i(p)\Big)u^n.
    \end{equation*}
    Under these assumptions, the localization formula \eqref{eq:cosymplectic-localization} reads
    \begin{equation*}
        \int_M\alpha\,\frac{\beta^n}{n!}\,\mathrm{e}^{-fu}
        =\sum_{p\in L^{\mathbb{S}^1}}\int_{\mathbb{S}^1}
        \frac{\mathrm{e}^{-f(p)u}}{\prod_{i=1}^n m_i(p)u^n}.
    \end{equation*}
    Setting $u=\mathrm{i} t$ gives the cosymplectic analogue of the classical stationary phase formula of Duistermaat and Heckman:
    \begin{equation*}
        \int_M\alpha\,\frac{\beta^n}{n!}\,\mathrm{e}^{-\mathrm{i} f t}
        =\sum_{p\in L^{\mathbb{S}^1}}\int_{\mathbb{S}^1}
        \frac{\mathrm{e}^{-\mathrm{i} f(p)t}}{\prod_{i=1}^n m_i(p)\,\mathrm{i}^n t^n}.
    \end{equation*}
\end{example}

\subsection{Localization formulae from mapping tori}

We have thus far described various localization formulae for Hamiltonian group actions in cosymplectic manifolds. For a cosymplectic manifold $M$ in the assumptions of \Cref{thm:mapping-torus}, such formulae admit further refinements. Let us begin by proving the following lemma which reduces certain integrals appearing in the localization formula to integrals on the leaf $L$.

\begin{lemma} \label{lem:modular-weight-integral}
    Let $M$ be a compact cosymplectic manifold with compact leaf $L$ and denote $\omega_L\coloneqq i_L^*\beta$ and $\mu_L=i_L^*\mu$. Let $W\subseteq\mathfrak g^*$ be a $\mu$-measurable subset. Then
    \begin{align}
        \int_M \alpha \wedge \frac{\beta^n}{n!} &= \operatorname{mp}(\alpha, \beta) \int_L \frac{\omega_L^n}{n!}, \label{eq:int1} \\
        \int_{\mu^{-1}(W)} \alpha \wedge \frac{\beta^n}{n!} &= \operatorname{mp}(\alpha, \beta) \int_{\mu_L^{-1}(W)} \frac{\omega_L^n}{n!}. \label{eq:int2}
    \end{align}
\end{lemma}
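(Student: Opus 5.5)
We will use the mapping-torus model of \Cref{thm:mapping-torus}. Set $c=\operatorname{mp}(\alpha,\beta)$ and $\varphi=\Phi_R^c|_L$. Consider the map
\[
F\colon L\times[0,c]\to M,\qquad F(x,t)=\Phi_R^t(x).
\]
It descends to the diffeomorphism $M_\varphi\cong M$. Restricted to the open set $L\times(0,c)$ it is an embedding, and its image has full measure in $M$: the complement is the single leaf $L$, a hypersurface of measure zero.

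The first step is to compute the pullbacks $F^*\alpha$ and $F^*\beta$.
\begin{itemize}
  \item Since $\mathcal L_R\alpha=\diff\iota_R\alpha=0$ and $\mathcal L_R\beta=\diff\iota_R\beta=0$, the Reeb flow preserves both forms.
  \item We have $F_*\partial_t=R$ and $F_*v=(\Phi_R^t)_*v$ for $v\in\mathrm{T}L$.
  \item Therefore $F^*\alpha=\diff t+\pi_L^*(i_L^*\alpha)=\diff t$, because $\alpha$ vanishes on the leaf.
  \item Similarly $F^*\beta=\pi_L^*\omega_L$, because $\iota_R\beta=0$ kills every term containing $\diff t$.
\end{itemize}
It follows that
\[
F^*\Big(\alpha\wedge\frac{\beta^n}{n!}\Big)=\diff t\wedge \pi_L^*\frac{\omega_L^n}{n!}.
\]
Orientations are fixed so that $F$ is orientation preserving, with $M$ oriented by $\alpha\wedge\beta^n$ and $L$ by $\omega_L^n$.

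Fubini on $L\times[0,c]$ then gives \eqref{eq:int1} directly:
\[
\int_M\alpha\wedge\frac{\beta^n}{n!}=\int_0^c\diff t\int_L\frac{\omega_L^n}{n!}=c\int_L\frac{\omega_L^n}{n!}.
\]

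For \eqref{eq:int2}, the extra input is the Reeb invariance \eqref{eq:cos-mom-map-reeb-inv}, namely $\mathcal L_R\mu=0$. It gives $\mu\circ\Phi_R^t=\mu$, and hence $\mu\circ F(x,t)=\mu_L(x)$. Consequently
\[
F^{-1}(\mu^{-1}(W))=\mu_L^{-1}(W)\times[0,c],
\]
which is measurable since $\mu_L$ is continuous and $W$ is measurable. Integrating the characteristic function $\chi_{\mu^{-1}(W)}$ against the volume form and applying the same Fubini argument yields $c\int_{\mu_L^{-1}(W)}\omega_L^n/n!$.

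The main point needing care is the identification of $F$ with the mapping-torus diffeomorphism, in particular that $F$ is injective on $L\times[0,c)$. This holds because $c$ is the minimal time at which the Reeb flow returns leaves to themselves: $\Phi_R^t(L)$ is a leaf different from $L$ for $0<t<c$. This is exactly the content of \Cref{thm:mapping-torus} and the definition of the modular period, so we take it from there. The remaining checks are routine:
\begin{itemize}
  \item the measure-zero boundary $L\times\{0,c\}$ does not affect the integrals;
  \item the orientation conventions must make the sign positive, which holds because $\diff t\wedge\omega_L^n$ corresponds to $\alpha\wedge\beta^n$.
\end{itemize}
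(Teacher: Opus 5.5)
Your proposal is correct and matches the paper's proof. The paper also trivializes $M\setminus L$ by $\zeta(q,t)=\Phi_R^t(q)$ on $L\times(0,c)$, shows $\zeta^*\alpha=\operatorname{pr}_2^*\diff t$, $\zeta^*\beta=\operatorname{pr}_1^*\omega_L$ and (from $\mathcal L_R\mu=0$) $\zeta^*\mu=\operatorname{pr}_1^*\mu_L$, and concludes by Fubini; your checks on injectivity and orientation are details it leaves implicit, and measurability of $\mu_L^{-1}(W)$ is best deduced from the hypothesis on $\mu^{-1}(W)$ via $F$ rather than from continuity of $\mu_L$.
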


\begin{proof}
    Both computations boil down to the following construction. Because $L=\pi^{-1}(p)$ has zero measure in $M$, it suffices to compute the respective integrals over $M\setminus L$. From the description of $M$ as a fibre bundle over $\mathbb S^1$, the map
    \begin{equation*}
        \begin{array}{rccc}
            \zeta \colon & L \times (0,c) & \longrightarrow & M\setminus L \\
             & (q, t) & \longmapsto & \Phi_R^t(q)
        \end{array}
    \end{equation*}
    is a diffeomorphism. Moreover, since $\iota_R\alpha=1$ and $\iota_R\beta=0$, we have that $\zeta^*\alpha=\operatorname{pr}_2^*\diff t$ and $\zeta^*\beta=\operatorname{pr}_1^*\omega_L$. Since $\mathcal{L}_R\mu=0$, we also have that $\zeta^*\mu=\operatorname{pr}_1^*\mu_L$. In particular,
    \[
        \mu^{-1}(W)\cap(M\setminus L)=\zeta\big( \mu_L^{-1}(W) \times (0,c) \big).
    \]

    For expression \eqref{eq:int1}, direct computation shows
    \begin{align*}
        \int_M\alpha\wedge\frac{\beta^n}{n!} = \int_{M\setminus L}\alpha\wedge\frac{\beta^n}{n!} = \int_{L \times (0,c)}\operatorname{pr}_2^*\diff t \wedge \frac{\operatorname{pr}_1^*\omega_L^n}{n!} =\int_0^c\diff t \int_L\frac{\omega_L^n}{n!}.
    \end{align*}
    The result follows from the fact that $c=\operatorname{mp}(\alpha,\beta)$.

    The second case follows similarly:
    \begin{align*}
        \int_{\mu^{-1}(W)}\alpha\wedge\frac{\beta^n}{n!} &= \int_{\mu^{-1}(W)\cap(M\setminus L)}\alpha\wedge\frac{\beta^n}{n!} \\
        &= \int_{\mu_L^{-1}(W) \times (0,c)}\operatorname{pr}_2^*\diff t \wedge \frac{\operatorname{pr}_1^*\omega_L^n}{n!} = \int_0^c\diff t \int_{\mu_L^{-1}(W)}\frac{\omega_L^n}{n!}.\qedhere
    \end{align*}
\end{proof}

\begin{remark}
    The previous lemma unveils an interesting fact concerning measure-theoretic results when regarding $M$ as a mapping torus: because individual leaves have zero measure, we can perform integration in the open complement $M\setminus L$, which can be trivialized in terms of the flow of the Reeb vector field. We have represented this procedure in \Cref{fig:trivialization-mapping-torus}.
    \begin{figure}[h]
        \centering
        \begin{subfigure}[b]{0.4\textwidth}
            \centering
            \includegraphics[width=\textwidth]{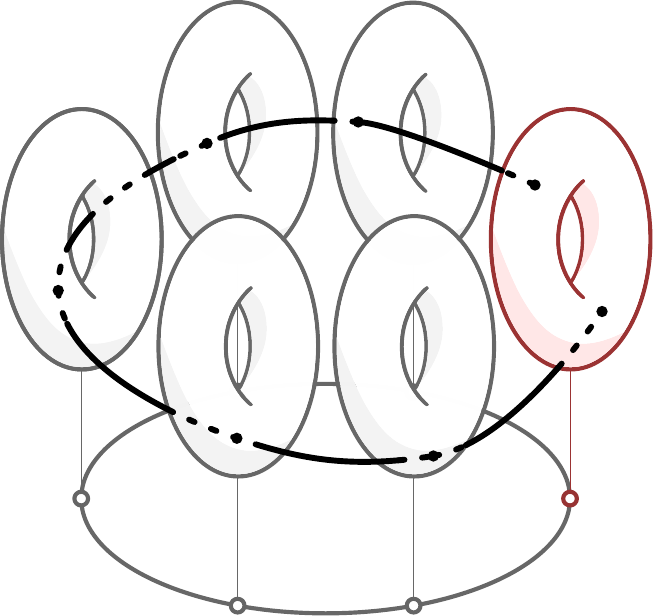}
            \caption{Mapping torus with leaf $\mathbb{T}^2$}
            \label{subfig:mapping-torus}
        \end{subfigure} \hspace{2em}
        \begin{subfigure}[b]{0.45\textwidth}
            \centering
            \raisebox{0.27\height}{\includegraphics[width=\textwidth,keepaspectratio,height=10cm]{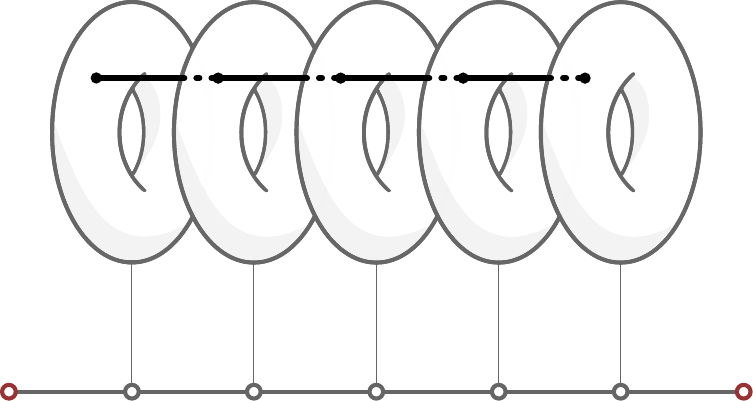}}
            \caption{Trivial fibration with leaf $\mathbb{T}^2$}
            \label{subfig:mapping-torus-developed}
        \end{subfigure}
        \caption{Transformation between a mapping torus (\Cref{subfig:mapping-torus}) and a trivial fibration (\Cref{subfig:mapping-torus-developed}) obtained by removing the red leaf. The black lines correspond to the flow of the Reeb field $R$ and its corresponding linearization.}
        \label{fig:trivialization-mapping-torus}
    \end{figure}
\end{remark}

With this result, we have the following expression for the localization formula \eqref{eq:localization-trivial-tau}. The proof follows directly from \Cref{ex:localization-trivial-tau} together with the argument used in the proof of \Cref{lem:modular-weight-integral}.

\begin{theorem}\label{thm:cosymplectic-localization-mapping}
    Let $(M,\alpha,\beta)$ be a cosymplectic manifold satisfying the mapping-torus assumptions and endowed with a Hamiltonian $G$-action. Then
    \begin{equation}\label{eq:cosymplectic-localization-mapping}
        \int_M \alpha\,\frac{\beta^n}{n!}\mathrm{e}^{-\mu}
        =
        \operatorname{mp}(\alpha,\beta)
        \sum_{C\in\pi_0(L^G)}
        \int_C
        \frac{\mathrm{e}^{\omega_L}\,\mathrm{e}^{-\mu|_C}}{e(\nu_C)}.
    \end{equation}
\end{theorem}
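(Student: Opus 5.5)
Two routes are available: reduce the left-hand side of \eqref{eq:cosymplectic-localization-mapping} to an integral over $L$ and apply the classical symplectic localization there, or start from \eqref{eq:localization-trivial-tau} and reduce each fixed-point contribution. I will use the first route and the second as a consistency check.

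\emph{Step 1.} First I trivialize $M\setminus L$ exactly as in the proof of \Cref{lem:modular-weight-integral}, using
\[
\zeta\colon L\times(0,c)\to M\setminus L,\qquad \zeta(q,t)=\Phi_R^t(q),\qquad c=\operatorname{mp}(\alpha,\beta).
\]
Under $\zeta$ we have $\zeta^*\alpha=\operatorname{pr}_2^*\diff t$ and $\zeta^*\beta=\operatorname{pr}_1^*\omega_L$. The moment map condition \eqref{eq:cos-mom-map-reeb-inv}, $\mathcal L_R\mu=0$, gives $\zeta^*\mu=\operatorname{pr}_1^*\mu_L$. Since $L$ has measure zero, Fubini yields
\[
\int_M\alpha\,\frac{\beta^n}{n!}\mathrm{e}^{-\mu}=c\int_L\frac{\omega_L^n}{n!}\mathrm{e}^{-\mu_L}.
\]
This is simply \eqref{eq:int1} with the $G$-invariant, $t$-independent weight $\mathrm{e}^{-\mu}$ inserted. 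One could also obtain it by pushing forward the measure of \eqref{eq:int2}.

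\emph{Step 2.} Next I check that the $G$-action restricts to $L$ as a Hamiltonian action with moment map $\mu_L$. The action is tangent to $\mathcal F$ because $\iota_{X^\#}\alpha=0$, and $G$ is connected, so it preserves each leaf. On $L$ we have $\iota_{X^\#}\omega_L=-\diff\mu_L^X$. Therefore $\omega_L-\mu_L$ is equivariantly closed on the compact symplectic manifold $L$. Applying \Cref{thm:equivariant-localization} to $\mathrm{e}^{\omega_L-\mu_L}$ and keeping the top-degree part gives
\[
\int_L\frac{\omega_L^n}{n!}\mathrm{e}^{-\mu_L}=\sum_{C\in\pi_0(L^G)}\int_C\frac{\mathrm{e}^{\omega_L}\mathrm{e}^{-\mu|_C}}{e(\nu_C)},
\]
where $\nu_C$ is the normal bundle of $C$ in $L$ and $e$ its equivariant Euler class. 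Multiplying by $c$ gives \eqref{eq:cosymplectic-localization-mapping}.

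\emph{Consistency with \eqref{eq:localization-trivial-tau}.} Because $[X^\#,R]=0$ (\Cref{rmk:not-triv-cosympl}), the Reeb flow preserves $M^G$. Each component $\widetilde C$ of $M^G$ is then the Reeb sweep of a union of components $C$ of $L^G$, permuted by the monodromy $\varphi$. The normal bundle of $\widetilde C$ in $M$ agrees with the leafwise normal bundle, since $R$ is tangent to $\widetilde C$. Trivializing $\widetilde C$ minus $L$ by $\zeta$ turns each fixed-point term of \eqref{eq:localization-trivial-tau} into $c$ times the corresponding sum over $C$.

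\emph{Main obstacle.} The delicate point is not the algebra but the interpretation of the right-hand side. The integrand must be read as the degree-$\dim C$ part. The equivariant Euler class must be the one of $\nu_C\subset TL$, in the sense needed for \Cref{thm:equivariant-localization}: the latter requires a vector $X$ with $X^\#$ having non-degenerate zeros, or alternatively one works in the localized Cartan ring. I would state this convention explicitly. I would also verify that $\mathcal L_R\mu=0$ is exactly what makes $\zeta^*\mu$ independent of $t$, since the proof depends on it.
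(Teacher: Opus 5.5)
Your proof is correct, but your main route differs from the paper's. The paper starts from the cosymplectic localization formula \eqref{eq:localization-trivial-tau}, which it obtained through the compact symplectic thickening. It then applies the Reeb-flow trivialization of \Cref{lem:modular-weight-integral} to the fixed-point contributions on the right-hand side, so it follows essentially your consistency check.

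You instead apply the trivialization to the left-hand side, which gives $c\int_L \frac{\omega_L^n}{n!}\mathrm{e}^{-\mu_L}$. You then invoke the classical ABBV formula on the compact Hamiltonian $G$-space $(L,\omega_L,\mu_L)$.

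What your route buys:
\begin{itemize}
\item It bypasses the thickening and \Cref{thm:cosymplectic-localization} altogether.
\item It avoids bookkeeping that the paper's one-line proof leaves implicit: components $\widetilde C$ of $M^G$ are Reeb sweeps of $\varphi$-orbits of components of $L^G$, and $\nu_{\widetilde C}$ restricts to $\nu_C$, with matching equivariant Euler classes under the $G$-equivariant Reeb flow.
\item It makes plain that the theorem is just symplectic localization on a leaf, scaled by the modular period.
\end{itemize}

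What the paper's route buys:
\begin{itemize}
\item It presents the result as a specialization of the general cosymplectic localization formula.
\item It gives a finer, term-by-term matching: each component $\widetilde C$ of $M^G$ contributes $c$ times the sum of the contributions of the components of $L^G$ it contains. Your consistency check correctly sketches this.
\end{itemize}

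The caveat you raise about the hypotheses of \Cref{thm:equivariant-localization} (a non-degenerate $X^\#$, or working in the localized ring) applies equally to both routes, so it does not count against yours.
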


Applying the theorem to the circle action of \Cref{ex:cosymplectic-stationary-phase}, one obtains the following stationary phase formula in the cosymplectic setting.

\begin{theorem}\label{thm:cosymplectic-stationary-phase-mapping}
    Let $(M,\alpha,\beta)$ be a cosymplectic manifold in the assumptions of \Cref{thm:mapping-torus} and endowed with a Hamiltonian $\mathbb{S}^1$-action. Then
    \begin{equation}\label{eq:cosymplectic-localization-circle-mapping}
        \int_M \alpha\,\frac{\beta^n}{n!}\mathrm{e}^{-fu}
        =
        \operatorname{mp}(\alpha,\beta)
        \sum_{C\in\pi_0(L^{\mathbb{S}^1})}
        \int_C
        \frac{\mathrm{e}^{\omega_L}\,\mathrm{e}^{-f|_C u}}{e(\nu_C)}.
    \end{equation}
    
    In particular, if the fixed points of the $\mathbb{S}^1$-action on $L$ are isolated, then setting $u=\mathrm{i} t$ gives
    \begin{equation}\label{eq:cosymplectic-stationary-mapping}
        \int_M \alpha\,\frac{\beta^n}{n!}\mathrm{e}^{-\mathrm{i} f t}
        =
        \operatorname{mp}(\alpha,\beta)
        \sum_{p\in L^{\mathbb{S}^1}}
        \frac{\mathrm{e}^{-\mathrm{i} f(p)t}}
        {\big(\prod_{i=1}^n m_i(p)\big)\mathrm{i}^n t^n}.
    \end{equation}
\end{theorem}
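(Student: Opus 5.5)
The plan is to deduce \eqref{eq:cosymplectic-localization-circle-mapping} directly from \Cref{thm:cosymplectic-localization-mapping} by specialising to $G=\mathbb{S}^1$. After identifying $\mathfrak{t}\cong\mathbb{R}$, the moment map becomes $\mu=fu$, where $u$ generates $\operatorname{Sym}(\mathfrak{t}^*)\cong\mathbb{R}[u]$. In \Cref{thm:cosymplectic-localization-mapping} we then substitute $\mathrm{e}^{-\mu}=\mathrm{e}^{-fu}$ and $\mu|_C=f|_C\,u$, and this gives the first identity verbatim.

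For the record, the reduction underlying \Cref{thm:cosymplectic-localization-mapping} runs as follows. Start from \Cref{ex:localization-trivial-tau}. Since the action is tangent to the leaves and commutes with $R$ (\Cref{rmk:not-triv-cosympl}), and $\mathcal{L}_R\mu=0$, the fixed set $M^{\mathbb{S}^1}$ is $R$-invariant. Each component $C$ of $M^{\mathbb{S}^1}$ is therefore swept out by the Reeb flow from $C\cap L$, a union of components of $L^{\mathbb{S}^1}$.

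Next use the trivialisation $\zeta\colon L\times(0,c)\to M\setminus L$ from the proof of \Cref{lem:modular-weight-integral}. Under $\zeta$ we have $\zeta^*\alpha=\diff t$, $\zeta^*\beta=\omega_L$ and $\zeta^*\mu=\mu_L$. Moreover, the normal bundle of $C$ in $M$ pulls back to the normal bundle of $C\cap L$ in $L$, because $R$ is tangent to $C$. Fubini then yields the factor $\int_0^c\diff t=\operatorname{mp}(\alpha,\beta)$ multiplying the leafwise localization sum.

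The second statement specialises to isolated fixed points $p\in L^{\mathbb{S}^1}$. Here $\omega_L$ restricts to zero on a point, so $\mathrm{e}^{\omega_L}|_p=1$. The equivariant Euler class of $\nu_p=\mathrm{T}_pL$ is the product of the weights, $\bigl(\prod_i m_i(p)\bigr)u^n$, as already used in \Cref{ex:cosymplectic-stationary-phase}. Integration over a point is evaluation, which gives
\[
\operatorname{mp}(\alpha,\beta)\sum_p \frac{\mathrm{e}^{-f(p)u}}{\prod_i m_i(p)\,u^n}.
\]
Both sides are analytic in $u$, being equalities of (rational) functions obtained from equivariant integrals, so we may substitute $u=\mathrm{i}t$ for $t\neq0$ to reach \eqref{eq:cosymplectic-stationary-mapping}.

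The main obstacle is the justification inherited from \Cref{thm:cosymplectic-localization-mapping}: passing from $\int_C\alpha\wedge(\cdots)$ over the one-dimensional-in-$R$ components $C\subset M$ to $c\int_{C\cap L}(\cdots)$. The subtle point is that $C\cap L$ may consist of several components of $L^{\mathbb{S}^1}$ permuted by the monodromy $\varphi$. I would handle this by integrating over $M\setminus L$ via $\zeta$, exactly as in \Cref{lem:modular-weight-integral}, so that the count is automatically over $\pi_0(L^{\mathbb{S}^1})$ with weight $c$. No separate bookkeeping of the Reeb orbits is then needed, since $\varphi$ preserves $f$ and the weights, by equivariance and $\mathcal{L}_R\mu=0$.
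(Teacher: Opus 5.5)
Your proposal is correct and follows the paper's own route. You specialise \Cref{thm:cosymplectic-localization-mapping} (itself obtained from \Cref{ex:localization-trivial-tau} and the Reeb-flow trivialisation of \Cref{lem:modular-weight-integral}) to $G=\mathbb{S}^1$ with $\mu=fu$, then use $e_{\mathbb{S}^1}(\nu_p)=\bigl(\prod_i m_i(p)\bigr)u^n$ from \Cref{ex:cosymplectic-stationary-phase} and set $u=\mathrm{i}t$, with your remarks on monodromy-permuted components of $L^{\mathbb{S}^1}$ and on analytic continuation in $u$ only making explicit what the paper leaves implicit.
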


\begin{remark}
    The reader should notice that the condition $\mathcal{L}_R \mu = 0$ in \eqref{eq:cos-mom-map-reeb-inv} implies that the fixed-point set of a $G$-action can never be isolated. Although the fixed points of the $\mathbb{S}^1$-action on $M$ appearing in expression \eqref{eq:cosymplectic-stationary-mapping} are not isolated, the Reeb field $R$ conjugates the action along the fibres of $\pi$, and therefore also conjugates the isotropy representations. In particular, the isotropy weights are well defined.
\end{remark}

\section{Kirwan surjectivity for cosymplectic manifolds} \label{sec:kirwan-cosymplectic}

Surjectivity in symplectic geometry originates in the seminal work of Kirwan~\cite{kirwan} on the cohomology ring of the Marsden--Weinstein reduced spaces $M\sslash G$. More precisely, Kirwan proved the following

\begin{theorem}[Kirwan~\cite{kirwan}]\label{thm:kirwan-surjectivity}
    Let $(M,\omega)$ be a compact symplectic manifold endowed with a Hamiltonian action of a compact, connected Lie group $G$. In the assumptions of the Marsden--Weinstein reduction theorem, the natural map
    \[
        \kappa\colon\mathrm{H}_G^\bullet(M)\longrightarrow\mathrm{H}^\bullet(M\sslash G),
    \]
    induced by the inclusion $i\colon\mu^{-1}(0)\hookrightarrow M$, is surjective.
\end{theorem}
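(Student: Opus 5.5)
The plan is to follow Kirwan's original strategy and use the norm-square of the moment map as an equivariant Morse function. Fix a $G$-invariant inner product on $\mathfrak{g}$, identify $\mathfrak{g}\cong\mathfrak{g}^*$, and set $f\coloneqq\|\mu\|^2\colon M\to\mathbb{R}$. This function is $G$-invariant. Since $\diff f_x = 2\langle \diff\mu_x,\mu(x)\rangle = -2\,\iota_{\mu(x)^\#}\omega$, the critical points of $f$ are exactly the points $x$ where the fundamental vector field of $\beta\coloneqq\mu(x)\in\mathfrak{g}$ vanishes at $x$. Hence $\operatorname{Crit}(f)=\bigsqcup_\beta G\cdot\bigl(M^{T_\beta}\cap\mu^{-1}(\beta)\bigr)$, indexed by a finite set $\mathcal{B}$ of points in a positive Weyl chamber; finiteness uses compactness of $M$ and the finitely many fixed-point components of subtori. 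The minimum value $0$ is attained on $\mu^{-1}(0)$.

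The first step is to show that $f$ is \emph{minimally degenerate} in Kirwan's sense, even though it is not Morse--Bott. For each $\beta\in\mathcal{B}$, I would work near $Z_\beta = M^{T_\beta}\cap\mu^{-1}(\beta)$. There I would use the local normal form: the component $\langle\mu,\beta\rangle$ is Morse--Bott with even indices, by the symplectic linearization of a torus action. Combining this with the restriction of $f$ to the symplectic submanifold $M^{T_\beta}$, I would produce a submanifold through $Z_\beta$ on which $f$ is minimized exactly along the critical set. Flowing by $-\nabla f$ then gives a $G$-invariant smooth stratification $M=\bigsqcup_{\beta}S_\beta$ with $S_0=\{x: \text{the limit of the flow lies in } \mu^{-1}(0)\}$. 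Here I would assume Lojasiewicz-type convergence of the gradient flow; Kirwan proves this, and it is the main technical ingredient. $S_0$ is open and retracts $G$-equivariantly onto $\mu^{-1}(0)$. Each stratum $S_\beta$ has even real codimension $d_\beta$, equal to the number of negative weights of $\beta$ on the normal bundle, and $S_\beta\simeq_G G\times_{P_\beta}Y_\beta$ retracts equivariantly onto $G\times_{\mathrm{Stab}\beta}Z_\beta$.

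The second step is equivariant perfection. I would add the strata in order of increasing $f$ and use the Thom--Gysin sequences
\[
\cdots\to \mathrm{H}_G^{\bullet-d_\beta}(S_\beta)\to \mathrm{H}_G^\bullet(U_{\le\beta})\to \mathrm{H}_G^\bullet(U_{<\beta})\to\cdots .
\]
The Atiyah--Bott lemma says these split into short exact sequences provided the equivariant Euler class of the normal bundle of $S_\beta$ is not a zero divisor in $\mathrm{H}_{\mathrm{Stab}\beta}^\bullet(Z_\beta)$. This holds because the subtorus $T_\beta$ acts on $Z_\beta$ with trivial action but with nonzero weights on the negative normal bundle. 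Consequently each restriction map $\mathrm{H}_G^\bullet(U_{\le\beta})\to \mathrm{H}_G^\bullet(U_{<\beta})$ is surjective. Composing these maps gives surjectivity of $\mathrm{H}_G^\bullet(M)\to \mathrm{H}_G^\bullet(S_0)\cong\mathrm{H}_G^\bullet(\mu^{-1}(0))$. Since $0$ is a regular value, $G$ acts locally freely on $\mu^{-1}(0)$, so over $\mathbb{R}$ we have $\mathrm{H}_G^\bullet(\mu^{-1}(0))\cong\mathrm{H}^\bullet(\mu^{-1}(0)/G)=\mathrm{H}^\bullet(M\sslash G)$. This identifies the composite with $\kappa$.

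I expect the main obstacle to be the analytic part of the first step: controlling the degenerate critical sets of $\|\mu\|^2$ and proving that the gradient flow converges, so that the stratification is genuinely smooth with the stated normal data. I would first try to reduce this to the Morse--Bott theory of the individual components $\mu^\beta$ via the local normal form, and cite Kirwan's minimal degeneracy argument for the remaining convergence statement.
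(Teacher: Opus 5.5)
The paper gives no proof of this statement: it quotes Kirwan and uses the result as a black box in the cosymplectic surjectivity theorem. Your outline is Kirwan's own argument (the $G$-equivariant Morse stratification of $\|\mu\|^2$ via minimal degeneracy, then equivariant perfection through the Atiyah--Bott non-zero-divisor criterion with the torus $T_\beta$, which is central in $\operatorname{Stab}\beta$), so it follows the route the paper relies on.

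One correction that does not affect surjectivity: the real codimension of $S_\beta$ is $\lambda_\beta-\dim(G/\operatorname{Stab}\beta)$, where $\lambda_\beta$ is the index of $\langle\mu,\beta\rangle$ along $Z_\beta$, because the orbit directions transverse to $\operatorname{Stab}\beta$ lie in the negative eigenspace of its Hessian yet are tangent to $S_\beta$. Also, the parabolic $P_\beta$ exists only in the K\"ahler setting, so in general one works directly with the retraction of $S_\beta$ onto $G\times_{\operatorname{Stab}\beta}Z_\beta$.
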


We give a simple proof of Kirwan's surjectivity theorem for cosymplectic manifolds using the symplectic thickening technique developed in \Cref{sec:symplectic-thickening}. The following result is a simple corollary of \Cref{thm:reduction-commutes-thickening}, since the Kirwan maps of the spaces are induced from the inclusions $j \colon \mu^{-1}(\xi) \to M$ and $\hat{\jmath} \colon \hat{\mu}^{-1}(\xi) \to \widehat{M}$ and that they are related by $\hat{\jmath} = j \times \operatorname{id}_{\mathbb{S}^1}$.

\begin{corollary}\label{cor:kirwan-compatible-thickening}
    Under the assumptions of \Cref{thm:reduction-commutes-thickening}, the
    Kirwan map for the thickened symplectic Hamiltonian space identifies with
    \begin{equation} \label{eq:kirwan-map-thickening}
        \hat{\kappa} = \kappa \otimes \operatorname{id}_{\mathrm{H}^\bullet(\mathbb{S}^1)}
    \end{equation}
    under the Künneth isomorphisms
    \[
       \mathrm{H}_G^\bullet(M \times \mathbb{S}^1) \cong \mathrm{H}_G^\bullet(M) \otimes \mathrm{H}^\bullet(\mathbb{S}^1),
       \qquad
       \mathrm{H}^\bullet(\widehat{M}_\xi)
       \cong \mathrm{H}^\bullet(M_\xi) \otimes \mathrm{H}^\bullet(\mathbb{S}^1).
    \]
\end{corollary}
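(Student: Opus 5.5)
The plan is to realize both Kirwan maps as pullbacks along explicit maps of spaces and then check that the Künneth isomorphisms intertwine them.

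First, identify the Kirwan maps concretely. For $M$, the map $\kappa$ is the composite
\[
\mathrm{H}_G^\bullet(M)\xrightarrow{j^*}\mathrm{H}_G^\bullet(\mu^{-1}(\xi))\cong \mathrm{H}_{G_\xi}^\bullet(\mu^{-1}(\xi))\cong \mathrm{H}^\bullet(M_\xi).
\]
The last isomorphism uses that $G_\xi$ acts locally freely (fibrating) on $\mu^{-1}(\xi)$, so that $(\mu^{-1}(\xi))_{G_\xi}\to M_\xi$ is a rational cohomology equivalence. The analogous description holds for $\hat\kappa$, with $\hat\jmath$ and $\hat\pi$ in place of $j$ and $\pi$.

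Second, establish the equivariant Künneth isomorphism. Since $G$ acts trivially on $\mathbb{S}^1$, the homotopy quotient satisfies
\[
(M\times\mathbb{S}^1)_G=M_G\times\mathbb{S}^1.
\]
The ordinary Künneth theorem over $\mathbb{R}$ then gives $\mathrm{H}_G^\bullet(M\times\mathbb{S}^1)\cong\mathrm{H}_G^\bullet(M)\otimes\mathrm{H}^\bullet(\mathbb{S}^1)$, and likewise for $\hat\mu^{-1}(\xi)=\mu^{-1}(\xi)\times\mathbb{S}^1$ with respect to $G_\xi$. In the Cartan model one can make this explicit: every class is written as $\operatorname{pr}_1^*a+\operatorname{pr}_1^*b\wedge\diff\theta$ with $a,b$ equivariant classes on $M$. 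This mirrors the splitting $[\imath_{\theta_0}^*\cdot]\oplus[\iota_{\partial_\theta}\cdot]$ of \Cref{rmk:cosymp-struct-from-symp-thick}.

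Third, check naturality. We have $\hat\jmath=j\times\operatorname{id}_{\mathbb{S}^1}$ by \Cref{thm:reduction-commutes-thickening}, and $\hat\pi=\pi\times\operatorname{id}$ by the diagram \eqref{eq:triv-red-thick}. Since pullback along a product map $f\times\operatorname{id}$ acts on cross products as $f^*\otimes\operatorname{id}$, both $\hat\jmath^*$ and the isomorphism $\mathrm{H}_{G_\xi}^\bullet(\hat\mu^{-1}(\xi))\cong\mathrm{H}^\bullet(\widehat M_\xi)$ factor as the corresponding map for $M$ tensored with $\operatorname{id}_{\mathrm{H}^\bullet(\mathbb{S}^1)}$. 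Composing these gives \eqref{eq:kirwan-map-thickening}.

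The main obstacle is the identification $\mathrm{H}_{G_\xi}^\bullet(\mu^{-1}(\xi)\times\mathbb{S}^1)\cong\mathrm{H}^\bullet(M_\xi\times\mathbb{S}^1)$ compatibly with Künneth. Concretely, one must show that the comparison map $(\mu^{-1}(\xi)\times\mathbb{S}^1)_{G_\xi}\to M_\xi\times\mathbb{S}^1$ is literally the product of the comparison map for $\mu^{-1}(\xi)$ with $\operatorname{id}_{\mathbb{S}^1}$. I would verify this at the level of Borel constructions, where it is immediate from the trivial action on $\mathbb{S}^1$. As a cross-check, I would also verify it in the Cartan model via the basic-forms description for locally free actions: basic forms on $\mu^{-1}(\xi)\times\mathbb{S}^1$ are exactly sums of basic forms on $\mu^{-1}(\xi)$ wedged with forms on $\mathbb{S}^1$.
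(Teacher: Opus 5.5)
Your proposal is correct and follows essentially the same route as the paper. The paper's justification is just the observation that both Kirwan maps are induced by the inclusions and quotient maps, with $\hat\jmath = j\times\operatorname{id}_{\mathbb{S}^1}$ and $\widehat M_\xi\cong M_\xi\times\mathbb{S}^1$ from \Cref{thm:reduction-commutes-thickening}; your Borel-construction argument ($(M\times\mathbb{S}^1)_G = M_G\times\mathbb{S}^1$, $\hat\pi=\pi\times\operatorname{id}$) simply makes explicit the naturality of Künneth that the paper leaves implicit. One small slip: for $G$ non-abelian and $\xi$ not fixed by the coadjoint action, $\mu^{-1}(\xi)$ is only $G_\xi$-invariant, so $\mathrm{H}_G^\bullet(\mu^{-1}(\xi))$ is undefined and the first arrow should read $\mathrm{H}_G^\bullet(M)\to\mathrm{H}_{G_\xi}^\bullet(M)\to\mathrm{H}_{G_\xi}^\bullet(\mu^{-1}(\xi))$; this changes nothing else in your argument.
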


The previous result directly implies the cosymplectic version of Kirwan's surjectivity theorem in the cosymplectic setting.

\begin{theorem}
    Let $(M,\alpha,\beta)$ be a compact cosymplectic manifold endowed with a Hamiltonian action of a compact connected Lie group $G$, with moment map $\mu\colon M\to\mathfrak{g}^*$ satisfying the assumptions of \Cref{thm:cos-reduction}. Then the Kirwan map
    \[
        \kappa\colon\mathrm H_G^\bullet(M)\longrightarrow\mathrm H^\bullet(M\sslash G),
    \]
    induced by the inclusion $\mu^{-1}(0)\hookrightarrow M$, is surjective.
\end{theorem}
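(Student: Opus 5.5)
The plan is to deduce the statement from Kirwan's theorem (\Cref{thm:kirwan-surjectivity}) applied to the compact symplectic thickening, using \Cref{cor:kirwan-compatible-thickening} to descend surjectivity from $\widehat{M}$ back to $M$.

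First, by \Cref{cor:symp-thick-ham-ext}, the extended action on $(\widehat M,\widehat\omega)=(M\times\mathbb{S}^1,\beta+\diff\theta\wedge\alpha)$ is Hamiltonian with moment map $\hat\mu=\operatorname{pr}_1^*\mu$. Since $M$ is compact, $\widehat M$ is compact by \Cref{prop:symplectic-thickening}. By \Cref{thm:reduction-commutes-thickening} with $\xi=0$, the value $0$ is regular for $\hat\mu$ and $\widehat M\sslash G\cong (M\sslash G)\times\mathbb{S}^1$ is smooth. The stabiliser is $G_0=G$, and the fibrating hypothesis transfers because $G$ acts trivially on the $\mathbb{S}^1$ factor. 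Hence the hypotheses of \Cref{thm:kirwan-surjectivity} hold, and $\hat\kappa\colon\mathrm H_G^\bullet(\widehat M)\to\mathrm H^\bullet(\widehat M\sslash G)$ is surjective.

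Next, \Cref{cor:kirwan-compatible-thickening} gives $\hat\kappa=\kappa\otimes\operatorname{id}_{\mathrm H^\bullet(\mathbb S^1)}$ under the Künneth isomorphisms. Since $\mathrm H^\bullet(\mathbb S^1)=\mathbb R\cdot 1\oplus\mathbb R\cdot[\diff\theta]$, this decomposes as $\hat\kappa=\kappa\oplus\kappa[-1]$ on $\mathrm H_G^\bullet(M)\oplus\mathrm H_G^{\bullet-1}(M)$. Surjectivity of a direct sum of maps implies surjectivity of each summand, so $\kappa$ is surjective.

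A more hands-on way to see this step is to take $c\in\mathrm H^k(M\sslash G)$, pull it back to $\operatorname{pr}_1^*c$ on $(M\sslash G)\times\mathbb S^1$, and choose a preimage $\hat a\in\mathrm H_G^k(\widehat M)$. Restricting along $\imath_{\theta_0}$ yields $a=\imath_{\theta_0}^*\hat a$. Naturality of the Kirwan map with respect to the slice inclusions $\imath_{\theta_0}$, which commute with the inclusions $j$ and $\hat\jmath=j\times\operatorname{id}$, then gives $\kappa(a)=\imath_{\theta_0}^*\hat\kappa(\hat a)=c$. This version avoids needing the full Künneth identification of $\hat\kappa$ and uses only the commuting square of inclusions.

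The main obstacle is the coefficient issue. Kirwan surjectivity and the Cartan model hold over $\mathbb R$ (or $\mathbb Q$), and the equivariant Künneth isomorphism $\mathrm H_G^\bullet(M\times\mathbb S^1)\cong\mathrm H_G^\bullet(M)\otimes\mathrm H^\bullet(\mathbb S^1)$ must be justified for a trivial action on the second factor. For that, I would note that $(M\times\mathbb S^1)_G=M_G\times\mathbb S^1$ and apply the ordinary Künneth theorem with field coefficients. The slice-restriction argument sidesteps most of this issue, so I would present that argument as the main proof.
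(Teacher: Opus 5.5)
Your proposal is correct. Your first route is exactly the paper's proof: apply Kirwan's theorem to the compact thickening, use \Cref{cor:kirwan-compatible-thickening} to write $\hat\kappa=\kappa\otimes\operatorname{id}_{\mathrm{H}^\bullet(\mathbb{S}^1)}$, and descend surjectivity through the tensor factor.

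The slice-restriction argument you prefer to present is a genuinely different, leaner variant. It is a retract argument. It uses only three facts: $\imath_{\theta_0}\colon M\to M\times\mathbb{S}^1$ is a $G$-equivariant section of $\operatorname{pr}_1$; it carries $\mu^{-1}(0)$ into $\hat\mu^{-1}(0)$ and descends to a section of $(M\sslash G)\times\mathbb{S}^1\to M\sslash G$; and the Kirwan map is natural. It needs neither the equivariant K\"unneth isomorphism nor the identification of $\hat\kappa$. So the coefficient concern you raise does not arise: you only need Kirwan's theorem for $\widehat M$, with whatever coefficients it holds. It also shows, more generally, that Kirwan surjectivity passes to equivariant retracts compatible with the moment maps.

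In exchange, the paper's route gives the full structural statement $\hat\kappa=\kappa\otimes\operatorname{id}$. Over a field this also yields $\ker\hat\kappa=\ker\kappa\otimes\mathrm{H}^\bullet(\mathbb{S}^1)$. That would let one transfer kernel descriptions between $M$ and its thickening, not just surjectivity.
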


\begin{proof}
    Since the symplectic thickening $\widehat{M}$ is a compact symplectic manifold endowed with a Hamiltonian action of a compact, connected Lie group $G$, Kirwan's \Cref{thm:kirwan-surjectivity} applies and we have that $\hat{\kappa} = \kappa \otimes \operatorname{id}_{\mathrm{H}^\bullet(\mathbb{S}^1)}$ (cf. eq. \eqref{eq:kirwan-map-thickening}) is a surjective morphism. But recall now that, if $V$ is a vector space over a field $\mathbb{K}$, a linear map $f \colon E \to F$ is surjective if and only if $f \otimes \operatorname{id}_V \colon E \otimes_{\mathbb{K}} V \to F \otimes_{\mathbb{K}} V$ is surjective. Therefore, $\kappa$ is surjective.
\end{proof}

\section{Variation of the cohomology class of a cosymplectic structure} \label{sec:var-coh-class}

Finally, we discuss the variation of the cohomology classes of the cosymplectic structure along the reduced spaces following Duistermaat and Heckman~\cite{duistermaat-variationcohomology-1982}. The proofs use once again the procedure of symplectic thickening to apply the results from the symplectic setting. As a corollary of the results, we recover the invariance of the modular period for the reduced spaces.

Let us briefly recall the setting of Duistermaat and Heckman as in \cite{duistermaat-variationcohomology-1982}. Let $(M, \omega)$ be a symplectic manifold with a Hamiltonian action of a torus $\mathbb{T}^k$. Assume that the moment map $\mu \colon M \to \mathfrak{t}^*$ is proper and, for the sake of simplicity, that the $\mathbb{T}^k$-action is free.\footnote{
    This condition ensures that $M_\xi \coloneqq \mu^{-1}(\xi) / \mathbb{T}^k$ is a smooth manifold. In the original proof by Duistermaat and Heckman, the action is only locally free: thus, the map $\mu^{-1}(\xi) \to M_\xi$ is not a $\mathbb{T}^k$-bundle and they are forced to consider the $\mathbb{T}^k$-bundle $\mu^{-1}(\xi) / \Gamma_\xi \to M_\xi$ with $\Gamma_\xi$ the subgroup generated by all finite stabilizers $\mathbb{T}^k_p$ with $p \in \mu^{-1}(\xi)$.

    Since our proof will use Duistermaat and Heckman's result with \Cref{thm:reduction-commutes-thickening}, this assumption is irrelevant and its purpose is to simplify the presentation.
}
If $U \subset \mathfrak{t}^*$ is a convex open neighbourhood of $\xi_0 \in \mathfrak{t}^*$ consisting of regular values, the map $\mu^{-1}(U) \to U$ is, by Ehresmann's theorem, a fibre bundle with fibre $\mu^{-1}(\xi)$ over $\xi \in \mathfrak{t}^*$. One may choose a $\mathbb{T}^k$-invariant connection for this bundle and obtain, by parallel transport along the segment $\overline{\xi_0 \xi}$, an isomorphism $\mu^{-1}(\xi_0) \cong \mu^{-1}(\xi)$. Since the connection is $\mathbb{T}^k$-equivariant we have an isomorphism of bundles between $\mu^{-1}(\xi_0) \to M_{\xi_0}$ and $\mu^{-1}(\xi) \to M_\xi$. This implies that we have an identification $M_{\xi_0} \cong M_\xi$ and hence an isomorphism of cohomology groups $\mathrm{H}^\bullet(M_{\xi_0}) \cong \mathrm{H}^\bullet(M_\xi)$: this fact is what allows us to consider the variation of the symplectic forms in cohomology.

Since the \(\mathbb{T}^k\)-bundles $\mu^{-1}(\xi)\to M_\xi$ are isomorphic as \(\xi\) varies in a connected component of the set of regular values, their characteristic classes agree. We denote by $\Omega \in \mathrm{H}^2(M_{\xi_0}; \mathfrak{t})$ the Chern class of the principal \(\mathbb{T}^k\)-bundle $\mu^{-1}(\xi_0)\to M_{\xi_0}$. Equivalently, after choosing a basis of \(\mathfrak{t}\), we may write
\[
    \Omega=(\Omega_1,\ldots,\Omega_k),
    \qquad
    \Omega_j\in \mathrm{H}^2(M_{\xi_0};\mathbb{R}),
\]
and for \(\eta\in\mathfrak{t}^*\) we use the notation
\[
    \langle \eta,\Omega\rangle
    =
    \sum_{j=1}^k \eta_j\Omega_j.
\]

Finally, Duistermaat and Heckman also explain how to relate their computations to Hamiltonian actions of compact, connected Lie groups $G$. Let us fix a maximal torus $\mathbb{T} \subset G$, and denote by
$
i \colon \mathfrak{t} \hookrightarrow \mathfrak{g}
$
the corresponding inclusion of Lie algebras. By fixing an $\operatorname{Ad}$-invariant, bilinear, non-degenerate form $\kappa$ on $\mathfrak{g}$, we obtain isomorphisms $\kappa^\flat \colon \mathfrak{g} \to \mathfrak{g}^*, \kappa^\flat \colon \mathfrak{t} \to \mathfrak{t}^*$, with inverses denoted by $\kappa^\sharp$. These maps may be used to construct a section $p^* \colon \mathfrak{t}^* \to \mathfrak{g}^*$ defined by $p^* = \kappa^\flat \circ i \circ \kappa^\sharp$. Recall that, over the set of regular points $\mathfrak{g}_{\mathrm{reg}}$, the set $\mathfrak{t}_{\mathrm{reg}} = \mathfrak{g}_{\mathrm{reg}} \cap \mathfrak{t}$ is a slice for the adjoint action. Since $\kappa$ intertwines the adjoint and coadjoint actions, the corresponding set $\mathfrak{t}_{\mathrm{reg}}^* \coloneqq \kappa^\flat(\mathfrak{t}_{\mathrm{reg}})$ is a slice for the coadjoint action on $\mathfrak{g}_{\mathrm{reg}}^* \coloneqq \kappa^\flat(\mathfrak{g}_{\mathrm{reg}})$.

Now, the centralizer of a regular value $\xi \in \mathfrak{g}^*$ of $\mu$ is a maximal torus; in particular, $\xi \in \mathfrak{g}_{\mathrm{reg}}^*$. Therefore, there exists $g \in G$ such that $\alpha = \operatorname{Ad}_g^* \xi \in \mathfrak{t}_{\mathrm{reg}}^*$. By the equivariance of the moment map,
\[
\mu^{-1}(\xi)
=
\mu^{-1}(\operatorname{Ad}_{g^{-1}}^* \alpha)
=
\rho_g\bigl(\mu^{-1}(\alpha)\bigr).
\]
Since the corresponding stabilizers are also conjugate, we conclude that $\rho_g$ induces a symplectomorphism $\rho_g \colon M_\alpha \to M_\xi$. Thus, the variation of the cohomology class on $M_\xi$ can be read from the corresponding variation on $M_\alpha$.

With these preliminaries in place, we may now state and prove the cosymplectic analogue of Duistermaat--Heckman's variation in cohomology. We note that the classical computation of Duistermaat and Heckman only concerns the variation in cohomology of the symplectic structure. In the cosymplectic setting, however, we do not only have a closed, leafwise symplectic form but also the defining one-form $\alpha$: therefore, the variation in cohomology of the cosymplectic structure also encodes the transverse component to the symplectic foliation.

\begin{theorem} \label{thm:var-cosymp-struct}
    Let $(M, \alpha, \beta)$ be a cosymplectic manifold equipped with a Hamiltonian action of a torus $\mathbb{T}^k$. Let $\xi, \xi_0 \in \mathfrak{t}^*$ lie in the same connected component of the set of regular values of $\mu$. Then, we have
    \begin{equation} \label{eq:var-cosymp-struct}
        [\alpha_\xi] = [\alpha_{\xi_0}], \qquad  [\beta_\xi] = [\beta_{\xi_0}] + \langle \xi - \xi_0, \Omega \rangle,
    \end{equation}
    where $\Omega$ is the Chern class of the fibration $\mu^{-1}(\xi_0) \to M_{\xi_0}$.
\end{theorem}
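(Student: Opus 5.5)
The plan is to pass to the compact symplectic thickening and read off both classes from the classical Duistermaat--Heckman variation formula.

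By \Cref{cor:symp-thick-ham-ext}, the $\mathbb{T}^k$-action extends to a Hamiltonian action on $(\widehat M,\widehat\omega)=(M\times\mathbb{S}^1,\beta+\diff\theta\wedge\alpha)$ with moment map $\hat\mu=\operatorname{pr}_1^*\mu$. Hence $\hat\mu$ has exactly the same regular values as $\mu$, and
\[
\hat\mu^{-1}(\xi)=\mu^{-1}(\xi)\times\mathbb{S}^1 .
\]
By \Cref{thm:reduction-commutes-thickening}, $\widehat M_\xi\cong M_\xi\times\mathbb{S}^1$ with $\widehat\omega_\xi=\beta_\xi+\diff\theta\wedge\alpha_\xi$. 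The principal $\mathbb{T}^k$-bundle $\hat\mu^{-1}(\xi_0)\to\widehat M_{\xi_0}$ is the pullback of $\mu^{-1}(\xi_0)\to M_{\xi_0}$ along $\operatorname{pr}_1$. Its Chern class is therefore $\widehat\Omega=\operatorname{pr}_1^*\Omega$.

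Next we check that the identifications are compatible. Choose a $\mathbb{T}^k$-invariant connection on $\mu^{-1}(U)\to U$ and pull it back to $\hat\mu^{-1}(U)=\mu^{-1}(U)\times\mathbb{S}^1$, acting trivially in the $\mathbb{S}^1$-direction. Parallel transport on $\widehat M$ is then $(\text{transport on }M)\times\operatorname{id}_{\mathbb{S}^1}$. So the identification $\widehat M_{\xi_0}\cong\widehat M_\xi$ is the product of $M_{\xi_0}\cong M_\xi$ with the identity. A connected component of regular values need not be convex, so we join $\xi_0$ to $\xi$ by a chain of convex neighbourhoods and compose; both formulas in \eqref{eq:var-cosymp-struct} are additive along such chains.

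The classical Duistermaat--Heckman theorem applied to the thickening gives
\[
[\widehat\omega_\xi]=[\widehat\omega_{\xi_0}]+\langle\xi-\xi_0,\operatorname{pr}_1^*\Omega\rangle
\]
in $\mathrm{H}^2(M_{\xi_0}\times\mathbb{S}^1)$. We then decompose both sides using the Künneth isomorphism of \Cref{rmk:cosymp-struct-from-symp-thick}, $[\eta]\mapsto[\imath_{\theta_0}^*\eta]\oplus[\iota_{\partial_\theta}\eta]$.

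For the first component:
\[
\imath_{\theta_0}^*\widehat\omega_\xi=\beta_\xi,\qquad \imath_{\theta_0}^*\operatorname{pr}_1^*\Omega=\Omega,
\]
which gives $[\beta_\xi]=[\beta_{\xi_0}]+\langle\xi-\xi_0,\Omega\rangle$.

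For the second component, $\iota_{\partial_\theta}\widehat\omega_\xi=\alpha_\xi$. Moreover, $\operatorname{pr}_1^*\Omega$ has zero $\mathrm{H}^1(M_{\xi_0})\otimes\mathrm{H}^1(\mathbb{S}^1)$-component, so its $\iota_{\partial_\theta}$ part vanishes in cohomology. This gives $[\alpha_\xi]=[\alpha_{\xi_0}]$.

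The main obstacle is justifying that the Künneth projection $[\eta]\mapsto[\iota_{\partial_\theta}\eta]$ is well defined on cohomology and is applied to classes transported by the same identification. I would handle this by noting two facts:
\begin{itemize}
\item $\mathcal L_{\partial_\theta}=0$ on $\mathbb{S}^1$-invariant forms, so $\iota_{\partial_\theta}$ commutes with $\diff$ up to sign there.
\item Every class has an $\mathbb{S}^1$-invariant representative, obtained by averaging.
\end{itemize}
Alternatively, one can use $[\iota_{\partial_\theta}\eta]=\int_{\mathbb{S}^1}$-fibre integration, which is manifestly natural under maps of the form $f\times\operatorname{id}_{\mathbb{S}^1}$.

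A secondary point is that Duistermaat--Heckman is stated for proper moment maps. When $M$ is compact this is automatic for $\hat\mu$. In general, properness of $\mu$ implies properness of $\hat\mu$, since the $\mathbb{S}^1$ factor is compact; I would assume one of these, as in the statement's implicit setting.
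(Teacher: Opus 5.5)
Your proposal is correct and follows the paper's proof: you pass to the thickening $M\times\mathbb{S}^1$, apply the classical Duistermaat--Heckman formula with $\widehat\Omega=\operatorname{pr}_1^*\Omega$, and split the resulting identity using the Künneth decomposition of \Cref{rmk:cosymp-struct-from-symp-thick}, with contraction by $\partial_\theta$ killing $\operatorname{pr}_1^*\Omega$ to give $[\alpha_\xi]=[\alpha_{\xi_0}]$. The differences are minor: you extract $[\beta_\xi]$ by restricting along $\imath_{\theta_0}$ rather than by subtracting the $\diff\theta\wedge\alpha$ terms and using injectivity of $\operatorname{pr}_1^*$, and you make explicit several points the paper leaves implicit (compatibility of the parallel-transport identifications on $\widehat M$ and $M$, non-convex components, and well-definedness of $\iota_{\partial_\theta}$ on cohomology).
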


\begin{proof}
    Let us consider the symplectic thickening $M \times \mathbb{S}^1$ with symplectic form $\omega = \beta + \diff \theta \wedge \alpha$. The classical Duistermaat--Heckman theorem implies that the variation of the symplectic form is given by
    \begin{equation} \label{eq:sympl-dh}
        \omega_\xi = \omega_{\xi_0} + \langle \xi - \xi_0, \widehat{\Omega} \rangle,
    \end{equation}
    with $\widehat{\Omega}$ the first Chern class of the fibration $\hat{\mu}(\xi_0) \to \widehat{M}_{\xi_0}$.

    We can now specialize this result to the specific case of the symplectic thickening $\widehat{M}$ of a cosymplectic manifold. From diagram \eqref{eq:triv-red-thick} we observe that the fibration $\hat{\mu}^{-1}(\xi_0)$ can be described as the pullback $\operatorname{pr}_1^*(\mu^{-1}(\xi_0))$. In particular, the Chern classes are related by $\widehat{\Omega} = \operatorname{pr}_1^* \Omega$ with $\Omega$ the Chern class of the fibration $\mu^{-1}(\xi_0) \to M_{\xi_0}$.

    Now, we may recover the cohomology classes of the reduced cosymplectic structures in $M_\xi$ by virtue of \Cref{rmk:cosymp-struct-from-symp-thick}. Observe that we have
    \begin{equation*}
        \operatorname{pr}_1^* [\alpha_\xi] - \operatorname{pr}_1^* [\alpha_{\xi_0}] = \iota_{\partial_\theta} [\omega_\xi - \omega_{\xi_0}] = \iota_{\partial_\theta} \langle \xi - \xi_0, \widehat{\Omega} \rangle = \langle \xi - \xi_0, \iota_{\partial_{\theta}} \operatorname{pr}_1^* \Omega \rangle = 0
    \end{equation*}
    since $\iota_{\partial_\theta} \operatorname{pr}_1^* \Omega = 0$. As the map $\operatorname{pr}_1^* \colon \mathrm{H}^\bullet(M) \to \mathrm{H}^\bullet(M \times \mathbb{S}^1)$ is injective, we conclude the first part of the result.
    
    From this equation we may explicitly derive the variation of the cohomology class $[\beta_\xi]$. From expression \eqref{eq:sympl-dh} and the previous result,
    \begin{align*}
        \operatorname{pr}_1^* [\beta_\xi] - \operatorname{pr}_1^* [\beta_{\xi_0}] &= [\omega_\xi] - [\omega_{\xi_0}] - \diff \theta \wedge \operatorname{pr}_1^* [\alpha_{\xi}] - \diff \theta \wedge \operatorname{pr}_1^* [\alpha_{\xi_0}] \\
        &= \langle \xi - \xi_0, \operatorname{pr}_1^* \Omega \rangle \\
        &= \operatorname{pr}_1^* \langle \xi - \xi_0, \Omega \rangle.
    \end{align*}
    Once again, since $\operatorname{pr}_1^*$ is injective the result follows.
\end{proof}

\begin{remark}
    Since our proof only relies on the classical result of Duistermaat--Heckman and the naturality of the fibration $\operatorname{pr}_1^*\mu = \hat{\mu}$ following \Cref{thm:reduction-commutes-thickening}, we obtain a similar description to Duistermaat and Heckman \cite[p.\,264]{duistermaat-variationcohomology-1982} for the variation in cohomology for Hamiltonian actions of compact, connected Lie groups $G$.
\end{remark}

Recall that the form $\alpha$, or rather its cohomology class $[\alpha]$, is closely related to the modular period described in \Cref{thm:mapping-torus}. As a consequence of \Cref{thm:var-cosymp-struct} we have the following result.

\begin{corollary}
    Let $(M, \alpha, \beta)$ be a cosymplectic manifold described as a mapping torus as in \Cref{thm:mapping-torus} and under the assumptions of \Cref{thm:var-cosymp-struct}. Then, the modular period of the reduced spaces $M_\xi$ is constant, i.e. $\operatorname{mp}(\alpha_{\xi}, \beta_\xi) = \operatorname{mp}(\alpha_{\xi_0}, \beta_{\xi_0})$ for all $\xi \in U_{\xi_0}$.
\end{corollary}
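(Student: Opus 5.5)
The plan is to show that the modular period of a mapping-torus cosymplectic manifold is determined by the cohomology class $[\alpha]$ together with the integral lattice $\mathrm{H}^1(M;\mathbb{Z})$, and then to feed in the constancy $[\alpha_\xi]=[\alpha_{\xi_0}]$ from \Cref{thm:var-cosymp-struct}.

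The first step is to check that each reduced space $M_\xi$ is itself a mapping torus. The leaf $L$ is compact and $\mu$ is $R$-invariant by \eqref{eq:cos-mom-map-reeb-inv}, so the level set $\mu^{-1}(\xi)$ is preserved by the Reeb flow. We would verify that $R$ is tangent to $\mu^{-1}(\xi)$, commutes with the torus action (\Cref{rmk:not-triv-cosympl}), and descends to the Reeb field $R_\xi$ of $(\alpha_\xi,\beta_\xi)$. This uses $j^*\alpha=\pi^*\alpha_\xi$ and $j^*\beta=\pi^*\beta_\xi$: they give $\iota_{\pi_*R}\alpha_\xi=1$ and $\iota_{\pi_*R}\beta_\xi=0$. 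The leaves of $M_\xi$ are then the compact reduced spaces $(L\cap\mu^{-1}(\xi))/\mathbb{T}^k$. Consequently $M_\xi$ satisfies the hypotheses of \Cref{thm:mapping-torus}.

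The second step, the key lemma, is a formula for the modular period in terms of periods of $\alpha$. For a mapping torus with fibration $\pi\colon M\to\mathbb{S}^1$, the flow $\Phi_R^t$ moves leaves to leaves and $\alpha(R)=1$. Hence for any closed loop $\gamma$ in $M$, the period $\int_\gamma\alpha$ equals $c$ times the degree of $\pi\circ\gamma$. It follows that the period group $\{\int_\gamma\alpha : \gamma \in \mathrm{H}_1(M;\mathbb{Z})\}$ is exactly $c\,\mathbb{Z}$. The inclusion in $c\,\mathbb{Z}$ comes from the argument just given, and equality comes from the Reeb orbit segment from $x$ to $\varphi^{-1}(x)$, closed up inside the connected leaf $L$. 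Therefore $\operatorname{mp}(\alpha,\beta)$ is the positive generator of the image of $[\alpha]$ under $\mathrm{H}_1(M;\mathbb{Z})\to\mathbb{R}$.

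The third step uses the identification $M_{\xi_0}\cong M_\xi$ obtained from parallel transport along $\overline{\xi_0\xi}$. This identification is a diffeomorphism, so it matches $\mathrm{H}_1(M_{\xi_0};\mathbb{Z})$ with $\mathrm{H}_1(M_\xi;\mathbb{Z})$, and it is precisely the identification under which \Cref{thm:var-cosymp-struct} asserts $[\alpha_\xi]=[\alpha_{\xi_0}]$. The two period groups therefore coincide, and by the lemma their positive generators agree. This gives $\operatorname{mp}(\alpha_\xi,\beta_\xi)=\operatorname{mp}(\alpha_{\xi_0},\beta_{\xi_0})$ for $\xi$ in the convex neighbourhood $U_{\xi_0}$ of regular values.

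I expect the main obstacle to be the first step, namely that $M_\xi$ genuinely has a compact leaf and that the reduced Reeb field is the projection of $R$. An alternative route is to bypass mapping-torus hypotheses on $M_\xi$ altogether. Since $\pi$ is $R$-equivariant, $\alpha_\xi$ has the same period group as $j^*\alpha$. One could then show directly that the image of $\mathrm{H}_1(\mu^{-1}(\xi))\to\mathrm{H}_1(M)$ pairs with $\alpha$ to give all of $c\,\mathbb{Z}$, again using a Reeb segment closed up inside the connected set $\mu^{-1}(\xi)\cap L$. This would also yield the stronger statement that the common value equals $\operatorname{mp}(\alpha,\beta)$.
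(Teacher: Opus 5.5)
Your proposal is correct and follows the paper's argument. The paper also computes $\operatorname{mp}$ as $\int_\gamma\alpha$ over a loop whose projection to the base circle has degree one, and then invokes $[\alpha_\xi]=[\alpha_{\xi_0}]$ from \Cref{thm:var-cosymp-struct}. Two of your steps are small improvements on the paper's version: characterizing $\operatorname{mp}$ as the positive generator of the period group of $[\alpha]$ makes it visibly invariant under the diffeomorphism $M_{\xi_0}\cong M_\xi$, so you never need to check that this identification sends degree-one loops to degree-one loops; and your first step verifies that each $M_\xi$ again has a compact leaf, so that $\operatorname{mp}(\alpha_\xi,\beta_\xi)$ is defined, which the paper leaves implicit.
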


\begin{proof}
    Recall that the various reduced spaces are identified by means of a $\mathbb{T}^k$-invariant connection. The modular period $\operatorname{mp}(\alpha, \beta)$ of a cosymplectic manifold may be computed as the integral
    \begin{equation*}
        \operatorname{mp}(\alpha, \beta) = \int_{\gamma} \alpha
    \end{equation*}
    for any $\gamma \colon \mathbb{S}^1 \to M$ such that\footnote{
       Equivalently, $\gamma$ is any loop whose projection to the base circle of the mapping torus has degree one. This formulation removes the need to keep track of base points in the fundamental groups.
    } $[\pi \gamma] = 1 \in \mathbb{Z} \cong \pi_1(\mathbb{S}^1)$. But then, this equation together with expression \eqref{eq:var-cosymp-struct} directly implies the result.
\end{proof}

The variation of the cohomology class of the reduced spaces was originally used by Duistermaat and Heckman to derive a formula for the variation of the symplectic volume and to relate it to the pushforward of the symplectic measure. We conclude this section by proving an analogous result in the cosymplectic setting.

\begin{theorem}
    Let $(M, \alpha, \beta)$ be a cosymplectic manifold in the assumptions of \Cref{thm:var-cosymp-struct}. If we denote by $\lambda_{(M, \alpha, \beta)}$ the standard cosymplectic measure and by $\lambda_{\mathrm{Leb}}$ the Lebesgue measure in $\mathfrak{t}^*$, the Radon--Nykodim derivative of the pushforward measure $\mu_* \lambda_{(M, \alpha, \beta)}$ is given by
    \begin{equation}
        f(\xi) = \frac{\diff \mu_* \lambda_{(M, \alpha, \beta)}}{\diff \lambda_{\mathrm{Leb}}}(\xi) = \int_{\mu^{-1}(\xi)} \alpha_\xi \wedge \frac{\beta_\xi^{n - l}}{(n - l)!}.
    \end{equation}
    In particular, \Cref{thm:var-cosymp-struct} implies the function $f$ is polynomial.
\end{theorem}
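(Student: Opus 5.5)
The plan is to deduce the statement from the classical Duistermaat--Heckman theorem for the pushforward of the Liouville measure, applied to the compact symplectic thickening $\widehat M = M\times\mathbb{S}^1$, and then strip off the circle factor using \Cref{thm:reduction-commutes-thickening}. Throughout, $l=k$ is the dimension of the torus. The integral on the right-hand side is to be read as an integral over the reduced space $M_\xi=\mu^{-1}(\xi)/\mathbb{T}^k$, where $\alpha_\xi,\beta_\xi$ live. We work with $\xi$ in a connected component of the regular values, with $\mu$ proper and the action free, as in the setting of \Cref{thm:var-cosymp-struct}.

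\emph{Step 1: compare the measures upstairs.} By \eqref{eq:thicken-sympl-vol}, the Liouville form of $(\widehat M,\widehat\omega)$ is
\[
\frac{\widehat\omega^{\,n+1}}{(n+1)!}=\diff\theta\wedge\alpha\wedge\frac{\beta^n}{n!}.
\]
Hence, after fixing orientations so that signs agree, the Liouville measure $\lambda_{\widehat M}$ is the product of $\lambda_{(M,\alpha,\beta)}$ with the measure $\diff\theta$, which has total mass $1$ on $\mathbb{S}^1=\mathbb{R}/\mathbb{Z}$. Since $\hat\mu=\operatorname{pr}_1^*\mu$ by \Cref{cor:symp-thick-ham-ext}, Fubini gives, for any measurable $W\subseteq\mathfrak t^*$,
\[
\hat\mu_*\lambda_{\widehat M}(W)=\lambda_{(M,\alpha,\beta)}\bigl(\mu^{-1}(W)\bigr).
\]
That is, $\hat\mu_*\lambda_{\widehat M}=\mu_*\lambda_{(M,\alpha,\beta)}$, so the two Radon--Nikodym derivatives coincide.

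\emph{Step 2: apply the symplectic Duistermaat--Heckman theorem.} The classical result says that the density of $\hat\mu_*\lambda_{\widehat M}$ at a regular value $\xi$ is the symplectic volume of the reduced space $\widehat M_\xi$. Its dimension is $2(n+1-l)$, so this volume is
\[
\int_{\widehat M_\xi}\frac{\widehat\omega_\xi^{\,m+1}}{(m+1)!},\qquad m=n-l.
\]
By \Cref{thm:reduction-commutes-thickening}, $\widehat M_\xi\cong M_\xi\times\mathbb{S}^1$ with $\widehat\omega_\xi=\beta_\xi+\diff\theta\wedge\alpha_\xi$. The computation of \Cref{prop:symplectic-thickening}, applied on the reduced space, then gives $\widehat\omega_\xi^{\,m+1}/(m+1)!=\diff\theta\wedge\alpha_\xi\wedge\beta_\xi^{m}/m!$. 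Fubini over the circle yields $f(\xi)=\int_{M_\xi}\alpha_\xi\wedge\beta_\xi^{m}/m!$.

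\emph{Step 3: polynomiality.} Identify all $M_\xi$ with $M_{\xi_0}$ via the invariant connection. Then $f(\xi)$ is the pairing of $[\alpha_\xi]\cup[\beta_\xi]^m/m!$ with the fundamental class of $M_{\xi_0}$. By \Cref{thm:var-cosymp-struct} this equals
\[
\frac{1}{m!}\Bigl\langle [\alpha_{\xi_0}]\cup\bigl([\beta_{\xi_0}]+\langle\xi-\xi_0,\Omega\rangle\bigr)^m,[M_{\xi_0}]\Bigr\rangle,
\]
which is visibly a polynomial in $\xi$ of degree at most $m$.

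The main obstacle is bookkeeping rather than substance. First, the orientation and sign conventions (the sign in $\diff\theta\wedge\alpha$ versus $\alpha\wedge\diff\theta$, already visible in the localization section) must be tracked so that both densities come out positive. Second, the hypotheses of the symplectic Duistermaat--Heckman theorem must be checked on $\widehat M$: properness of $\hat\mu$ follows from properness of $\mu$ since $\mathbb{S}^1$ is compact, and freeness and regularity transfer by \Cref{thm:reduction-commutes-thickening}. If one wants to avoid the sign issue altogether, the fallback is a direct coarea argument on $M$. This uses the local normal form $\mu^{-1}(U)\cong\mu^{-1}(\xi_0)\times U$ to write $\alpha\wedge\beta^n/n!$ as $\alpha_\xi\wedge\beta_\xi^{m}/m!\wedge\diff\xi\wedge(\text{fibre volume})$, with the torus fibres normalized to unit volume.
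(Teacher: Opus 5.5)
Your proposal is correct and follows the same route as the paper. You compare $\hat\mu_*\lambda_{\widehat M}$ with $\mu_*\lambda_{(M,\alpha,\beta)}$ via \eqref{eq:thicken-sympl-vol}, apply the classical Duistermaat--Heckman density formula on the thickening, and use \Cref{thm:reduction-commutes-thickening} together with Fubini over $\mathbb{S}^1$ to strip off the circle. The differences are only in presentation: you treat both sides as positive measures rather than carrying the two cancelling minus signs the paper uses, you correctly read the integral over $M_\xi$ with $l=k$, and you spell out the polynomiality step that the paper leaves as a one-line appeal to \Cref{thm:var-cosymp-struct}.
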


\begin{proof}
    Let us denote by $(\widehat{M}, \widehat{\omega})$ the symplectic thickening of $M$ following \Cref{prop:equivariant-symplectic-thickening} with induced moment map $\hat{\mu} = \operatorname{pr}_1^* \mu$. Let us begin by showing the relation between the Radon--Nykodim derivative of $\mu_* \smash{\lambda_{(\widehat{M}, \widehat{\omega})}}$ and $\mu_* \lambda_{(M, \alpha, \beta)}$. Direct computation taking equation \eqref{eq:thicken-sympl-vol} in mind shows
    \begin{align*}
        \hat{\mu}_* \lambda_{(\widehat{M}, \widehat{\omega})}(V) = \int_{\hat{\mu}^{-1}(V)} \frac{\widehat{\omega}^{n + 1}}{(n + 1) !} = \int_{\mu^{-1}(V) \times \mathbb{S}^1} \frac{(n + 1) \diff \theta \wedge \alpha \wedge \beta^n}{(n + 1)!} &= -\int_{\mu^{-1}(V)} \alpha \wedge \frac{\beta^n}{n!} \\
        &= -\mu_* \lambda_{(M, \alpha, \beta)}(V).
    \end{align*}
    Therefore, if $\hat{f}$ denotes the Radon--Nykodim derivative $\smash{\mu_* \lambda_{(\widehat{M}, \widehat{\omega})}}$ with respect to $\lambda_{\mathrm{Leb}}$, the previous discussion shows that $f = - \hat{f}$.
    
    Now, from the classical Duistermaat--Heckman theorem \cite[Prop.\,3.2]{duistermaat-variationcohomology-1982} we have that the Radon--Nykodim derivative satisfies the formula
    \begin{equation}
        \hat{f}(\xi) = \int_{\hat{\mu}^{-1}(\xi)} \frac{\widehat{\omega}_\xi^{n + 1 - k}}{(n + 1 - k)!}
    \end{equation}
    This fact together with \Cref{thm:reduction-commutes-thickening} yields
    \begin{align*}
        \hat{f}(\xi) &= \int_{\hat{\mu}^{-1}(\xi)} \frac{\widehat{\omega}_\xi^{n + 1- k}}{(n + 1 - k)!} \\
        &= \int_{\mu^{-1}(\xi) \times \mathbb{S}^1} \frac{(n + 1 - k) \diff \theta \wedge \alpha_\xi \wedge \beta_\xi^{n - k}}{(n + 1 - k)!} = - \int_{\mu^{-1}(\xi)} \alpha_\xi \wedge \frac{\beta_\xi^{n - k}}{(n - k)!}.
    \end{align*}
    This concludes the proof.
\end{proof}

\begin{remark}
    The previous result can be further refined if $M$ satisfies the mapping-torus assumptions of \Cref{thm:mapping-torus}. If we denote by $(L, \omega_L)$ a compact symplectic leaf, \Cref{lem:modular-weight-integral} implies that we have the relation
    \[
        \mu_* \lambda_{(M,\alpha,\beta)}
        =
        \operatorname{mp}(\alpha,\beta)\,
        \mu_* \lambda_{(L,\omega_L)}.
    \]
    Thus, in the mapping-torus case, the cosymplectic Duistermaat--Heckman measure is the ordinary symplectic Duistermaat--Heckman measure of a compact leaf, multiplied by the modular period.
\end{remark}

\begin{example}
    Let
    \[
        M=\mathbb{S}^1\times\mathbb{C}\mathrm{P}^n,
        \qquad
        \alpha=\diff t,
        \qquad
        \beta=\omega_{\mathrm{FS}},
    \]
    where $\omega_{\mathrm{FS}}$ denotes the Fubini--Study form pulled back from $\mathbb{C}\mathrm{P}^n$. Then $(M,\alpha,\beta)$ is a compact cosymplectic manifold. The standard Hamiltonian action of $\mathbb{T}^n$ on $\mathbb{C}\mathrm{P}^n$ extends trivially to $M$, and its moment map is $\mu(t,x)=\mu_{\mathrm{FS}}(x)$. Hence $\mu(M)=\mu_{\mathrm{FS}}(\mathbb{C}\mathrm{P}^n)$,
    which is the standard simplex in $\mathbb{R}^n$.

    Moreover,
    \[
        \mu_*\left(\alpha\wedge\frac{\omega_{\mathrm{FS}}^n}{n!}\right)
        =
        \left(\int_{\mathbb{S}^1}\alpha\right)
        (\mu_{\mathrm{FS}})_*
        \left(\frac{\omega_{\mathrm{FS}}^n}{n!}\right).
    \]
    Thus, the cosymplectic Duistermaat--Heckman measure is the usual Duistermaat--Heckman measure of $\mathbb{C}\mathrm{P}^n$, multiplied by the length of the Reeb circle.
\end{example}

\bibliographystyle{alpha}
\bibliography{equivariant-cosymplectic-manifolds}

\end{document}